\title{inverse obstacle scattering for elastic waves in three dimensions}
\author{Peijun Li\thanks{Department of Mathematics, Purdue
University, West Lafayette, Indiana 47907, USA. This author's research was
supported in part by the NSF grant DMS-1151308.
({\tt lipeijun@math.purdue.edu})}  \and  Xiaokai Yuan\thanks{Department of
Mathematics, Purdue University, West Lafayette, Indiana 47907, USA. ({\tt
yuan170@math.purdue.edu})} }
\begin{document}

\maketitle

\begin{abstract}
Consider an exterior problem of the three-dimensional elastic wave equation,
which models the scattering of a time-harmonic plane wave by a rigid obstacle.
The scattering problem is reformulated into a boundary value problem
by introducing a transparent boundary condition. Given the incident field, the
direct problem is to determine the displacement of the wave field from the known
obstacle; the inverse problem is to determine the obstacle's surface from the
measurement of the displacement on an artificial boundary enclosing the
obstacle. In this paper, we consider both the direct and inverse problems. The
direct problem is shown to have a unique weak solution by examining its
variational formulation. The domain derivative is studied and a frequency
continuation method is developed for the inverse problem. Numerical experiments
are presented to demonstrate the effectiveness of the proposed method.
\end{abstract}

\begin{keywords}
Elastic wave equation, inverse obstacle scattering, transparent
boundary condition, variational problem, domain derivative
\end{keywords}

\begin{AMS}
35A15, 78A46
\end{AMS}

\pagestyle{myheadings}
\thispagestyle{plain}
\markboth{P. Li and X.Yuan}{Inverse Obstacle Scattering For Elastic Waves}

\section{Introduction}

The obstacle scattering problem, which concerns the scattering of a
time-harmonic incident wave by an impenetrable medium, is a fundamental problem
in scattering theory \cite{ck-83}. It has played an important role in many
scientific areas such as geophysical exploration, nondestructive testing, radar
and sonar, and medical imaging. Given the incident field, the direct obstacle
scattering problem is to determine the wave field from the known obstacle; the
inverse obstacle scattering problem is to determine the shape of the obstacle
from the measurement of the wave field. Due to the wide applications and rich
mathematics, the direct and inverse obstacle scattering problems have been
extensively studied for acoustic and electromagnetic waves by numerous
researchers in both the engineering and mathematical communities \cite{ck-98,
m-03, n-00}.

Recently, the scattering problems for elastic waves have received
ever-increasing attention because of the significant applications in geophysics
and seismology \cite{amg-15, bc-ip05, ll-86}. The propagation of elastic waves
is governed by the Navier equation, which is complex due to the coupling of the
compressional and shear waves with different wavenumbers. The inverse elastic
obstacle scattering problem is investigated mathematically in \cite{cgk-ip01,
ey-ip10, hh-ip93} for the uniqueness and numerically in \cite{hlls-sjis14,
l-ip15} for the shape reconstruction. We refer to for some more related direct
and inverse scattering problems for elastic waves \cite{aa-siap01, bhsy,
hks-ip13, hlz-ip13, k-ip96, ks-je15, lw-ip15, lwz-ip15, lwz-jcp16, lwz-aa16,
nt-siap96, nu-siam95, nu-im94, t-siap15}.

In this paper, we consider the direct and inverse obstacle scattering problems
for elastic waves in three dimensions. The goal is fourfold: (1) develop a
transparent boundary condition to reduce the scattering problem into a boundary
value problem; (2) establish the well-posedness of the solution for the direct
problem by studying its variational formulation; (3) characterize the domain
derivative of the wave field with respect to the variation of the obstacle's
surface; (4) propose a frequency continuation method to reconstruct the
obstacle's surface. This paper significantly extends the two-dimensional work
\cite{lwwz-ip16}. We need to consider more complicated Maxwell's equation and
associated spherical harmonics when studying the transparent boundary condition.
Computationally, it is also more intensive.

The rigid obstacle is assumed to be embedded in an open space filled with a
homogeneous and isotropic elastic medium. The scattering problem is reduced
into a boundary value problem by introducing a transparent boundary condition on
a sphere. We show that the direct problem has a unique weak solution by
examining its variational formulation. The proofs are based on asymptotic
analysis of the boundary operators, the Helmholtz decomposition, and the
Fredholm alternative theorem.

The calculation of domain derivatives, which characterize the variation of the
wave field with respect to the perturbation of the boundary of an medium, is an
essential step for inverse scattering problems. The domain derivatives have been
discussed by many authors for the inverse acoustic and electromagnetic obstacle
scattering problems \cite{hk-siap04, k-ip93, p-mmas96}. Recently, the domain
derivative is studied in \cite{l-siap12} for the elastic wave by using boundary
integral equations. Here we present a variational approach to show that it is
the unique weak solution of some boundary value problem. We propose a
frequency continuation method to solve the inverse problem. The method requires
multi-frequency data and proceed with respect to the frequency. At each
frequency, we apply the descent method with the starting point given by
the output from the previous step, and create an approximation to the surface
filtered at a higher frequency. Numerical experiments are presented to
demonstrate the effectiveness of the proposed method. A topic review can be
found in \cite{bllt_ip15} for solving inverse scattering problems with
multi-frequencies to increase the resolution and stability of reconstructions.

The paper is organized as follows. Section 2 introduces the formulation of
the obstacle scattering problem for elastic waves. The direct problem is
discussed in section 3 where well-posedness of the solution is
established. Section 4 is devoted to the inverse problem. The domain derivative
is studied and a frequency continuation method is introduced for the
inverse problem. Numerical experiments are presented in section 5. The paper is
concluded in section 6. To avoid distraction from the main
results, we collect in the appendices some necessary notation and useful results
on the spherical harmonics, functional spaces, and transparent boundary
conditions.

\section{Problem formulation}

Consider a bounded and rigid obstacle $D\subset\mathbb R^3$ with a Lipschitz
boundary $\partial D$. The exterior domain $\mathbb{R}^3\setminus\bar{D}$ is
assumed to be filled with a homogeneous and isotropic elastic medium, which has
a unit mass density and constant Lam\'{e} parameters $\lambda, \mu$ satisfying
$\mu>0, \lambda+\mu>0$. Let $B_R=\{\boldsymbol{x}\in\mathbb{R}^3:\,
|\boldsymbol{x}|< R\}$, where the radius $R$ is large enough such that
$\bar{D}\subset B_R$. Define $\Gamma_R=\{\boldsymbol{x}\in\mathbb{R}^3:\,
|\boldsymbol{x}|=R\}$ and $\Omega=B_R\setminus\bar{D}$.

Let the obstacle be illuminated by a time-harmonic plane wave
\begin{equation}\label{pw}
 \boldsymbol{u}^{\rm inc}=\boldsymbol{d} e^{{\rm
 i}\kappa_{\rm p}\boldsymbol{x}\cdot\boldsymbol{d}}\quad\text{or}\quad   
\boldsymbol{u}^{\rm inc}=\boldsymbol{d}^\perp e^{{\rm
    i}\kappa_{\rm s}\boldsymbol{x}\cdot\boldsymbol{d}},
\end{equation}
where $\boldsymbol{d}$ and $\boldsymbol d^\perp$ are orthonormal vectors,
$\kappa_{\rm p}=\omega/\sqrt{\lambda+2\mu}$ and $\kappa_{\rm
s}=\omega/\sqrt{\mu}$ are the compressional wavenmumber and the shear
wavenumber. Here $\omega>0$ is the angular frequency. It is easy to verify
that the plane incident wave \eqref{pw} satisfies
\begin{equation}\label{uine}
 \mu\Delta\boldsymbol{u}^{\rm inc}+(\lambda+\mu)\nabla
 \nabla\cdot\boldsymbol{u}^{\rm inc}+\omega^2\boldsymbol{u}^{\rm
 inc}=0\quad\text{in}~ \mathbb{R}^3\setminus\bar{D}.
\end{equation}
Let $\boldsymbol u$ be the displacement of the total wave field which 
also satisfies
\begin{equation}\label{une}
 \mu\Delta\boldsymbol{u}+(\lambda+\mu)\nabla\nabla\cdot\boldsymbol{u}
 +\omega^2\boldsymbol{u}=0\quad\text{in}~ \mathbb{R}^3\setminus\bar{D}.
\end{equation}
Since the obstacle is elastically rigid, we have 
\begin{equation}\label{ubc}
 \boldsymbol{u}=0\quad\text{on}~ \partial D.
\end{equation}
The total field $\boldsymbol{u}$ consists of the incident field
$\boldsymbol{u}^{\rm inc}$ and the scattered field $\boldsymbol{v}$:
\[
\boldsymbol{u}=\boldsymbol{u}^{\rm inc}+\boldsymbol{v}.
\]
Subtracting \eqref{uine} from \eqref{une} yields that $\boldsymbol v$
satisfies
\begin{equation}\label{vne}
  \mu\Delta\boldsymbol{v}+(\lambda+\mu)\nabla\nabla\cdot\boldsymbol{v}
 +\omega^2\boldsymbol{v}=0\quad\text{in}~ \mathbb{R}^3\setminus\bar{D}.
\end{equation}

For any solution $\boldsymbol{v}$ of \eqref{vne}, we introduce the
Helmholtz decomposition by using a scalar function $\phi$ and a divergence
free vector function $\boldsymbol\psi$:
\begin{equation}\label{hdv}
 \boldsymbol{v}=\nabla\phi+\nabla\times\boldsymbol{\psi}, \quad 
\nabla\cdot\boldsymbol\psi=0.
\end{equation}
Substituting \eqref{hdv} into \eqref{vne}, we may verify that  
$\phi$ and $\boldsymbol{\psi}$ satisfy 
\begin{equation}\label{he}
\Delta\phi+\kappa^2_{\rm p}\phi=0,\quad \Delta\boldsymbol\psi+\kappa^2_{\rm
s}\boldsymbol\psi=0.
\end{equation}
In addition, we require that $\phi$ and $\boldsymbol{\psi}$ satisfy the
Sommerfeld radiation condition:
\begin{equation}\label{ksrc}
\lim_{r\to\infty}r\left(\partial_r\phi -{\rm
    i}\kappa_{\rm p}\phi\right)=0, \quad
 \lim_{r\to\infty}r\left(\partial_r\boldsymbol{\psi} -{\rm
    i}\kappa_{\rm s}\boldsymbol{\psi}\right)=0, \quad r=|\boldsymbol{x}|.
\end{equation}

Using the identity
$\nabla\times(\nabla\times\boldsymbol\psi)=-\Delta\boldsymbol\psi+\nabla (\nabla
\cdot\boldsymbol\psi),$ we have from \eqref{he} that $\boldsymbol\psi$ satisfies
the Maxwell equation:
\begin{equation}\label{ms}
 \nabla\times(\nabla\times\boldsymbol\psi)-\kappa^2_{\rm s}\boldsymbol\psi=0. 
\end{equation}
It can be shown (cf. \cite[Theorem 6.8]{ck-98}) that the Sommerfeld radiation
for $\boldsymbol\psi$ in \eqref{ksrc} is equivalent to the Silver--M\"{u}ller
radiation condition:
\begin{equation}\label{smrc}
 \lim_{r\to\infty}\left( (\nabla\times\boldsymbol\psi)\times\boldsymbol
x-{\rm i}\kappa_{\rm s}r\boldsymbol\psi\right)=0,\quad r=|\boldsymbol x|. 
\end{equation}

Given $\boldsymbol u^{\rm inc}$, the direct problem is to determine $\boldsymbol
u$ for the known obstacle $D$; the inverse problem is to determine the
obstacle's surface $\partial D$ from the boundary measurement of $\boldsymbol u$
on $\Gamma_R$. Hereafter, we take the notation of $a\lesssim b$ or $a\gtrsim b$
to stand for $a\leq Cb$ or $a\geq Cb$, where $C$ is a positive constant whose
specific value is not required but should be clear from the context.

\section{Direct scattering problem}

In this section, we study the variational formulation for the direct problem
and show that it admits a unique weak solution.

\subsection{Transparent boundary condition}

We derive a transparent boundary condition on $\Gamma_R$. Given $\boldsymbol
v\in\boldsymbol L^2(\Gamma_R)$, it has the Fourier expansion:
\[
 \boldsymbol v(R, \theta,
\varphi)=\sum_{n=0}^{\infty}\sum_{m=-n}^{n}v_{1n}^m\boldsymbol{T}_{n}^{m}
(\theta, \varphi)+ v_{2n}^m\boldsymbol{V}_{n}^{m}(\theta,
\varphi)+v_{3n}^m\boldsymbol{W}_{n}^{m}(\theta, \varphi),
\]
where $\{(\boldsymbol T_n^m, \boldsymbol V_n^m, \boldsymbol W_n^m): n=0,1,
\dots, m=-n,\dots, n\}$ is an orthonormal system in $\boldsymbol
L^2(\Gamma_R)$ and $v_{jn}^m$ are the Fourier coefficients of $\boldsymbol v$
on $\Gamma_R$. Define a boundary operator
\begin{equation}\label{bo}
 \mathscr{B}\boldsymbol{v}=\mu\partial_r\boldsymbol{v}
+(\lambda+\mu)(\nabla\cdot\boldsymbol{v})\boldsymbol{e}_r\quad\text{on}
~\Gamma_R,
\end{equation}
which is assumed to have the Fourier expansion:
\begin{equation}\label{boe1}
(\mathscr{B}\boldsymbol{v})(R, \theta,
\varphi)=\sum_{n=0}^{\infty}\sum_{m=-n}^{n}
w_{1n}^m\boldsymbol{T}_{n}^{m}(\theta,
\varphi)+w_{2n}^m\boldsymbol{V}_{n}^{m}(\theta,
\varphi)+w_{3n}^m\boldsymbol{W}_{n}^{m}(\theta, \varphi).
\end{equation}

Taking $\partial_{r}$ of $\boldsymbol v$ in \eqref{D-v}, evaluating it at
$r=R$, and using the spherical Bessel differential equations \cite{w-22}, we
get 
\begin{align}\label{pRv}
 \partial_{r}&\boldsymbol{v}(R,
\theta, \varphi)=\sum_{n=0}^{\infty}\sum_{m=-n}^{n}\Bigg[\frac{\sqrt {
n(n+1)}\phi_{n}^{m}}{R^2} (z_{n}(\kappa_{\rm p}R)-1)-
\frac{\psi_{2n}^m}{R^2}\Big(1+z_{n}(\kappa_{\rm s}R)\notag\\
&+(R\kappa_{\rm s})^2-n(n+1)\Big)\Bigg]
\boldsymbol{T}_{n}^{m}+\Bigg[\frac{\kappa_{\rm s}^2
\psi_{3n}^m}{\sqrt{n(n+1)}}
z_{n}(\kappa_{\rm s}R)\Bigg]\boldsymbol{V}_{n}^{m}
+\Bigg[\frac{\phi_{n}^{m}}{R^2}\big(n(n+1)\notag\\
&-(R\kappa_{\rm p})^2-2z_{n}(\kappa_{\rm p} R)\big)
+\frac{\sqrt{n(n+1)} \psi_{2n}^m}{R^2}(z_{n}(\kappa_{\rm
s}R)-1)\Bigg] \boldsymbol {W}_{n}^m,
\end{align}
where $z_n(t)=t h_n^{(1)'}(t)/h_n^{(1)}(t), h_n^{(1)}$ is the spherical Hankel
function of the first kind with order $n$, $\phi_n^m$ and $\psi_{jn}^m$ are the
Fourier coefficients for $\phi$ and $\boldsymbol\psi$ on $\Gamma_R$,
respectively.

Noting \eqref{D-v} and using
$\nabla\cdot\boldsymbol{v}=\Delta\phi=\frac{2}{r}\partial_{r}\phi+\partial_ {r
}^2\phi +\frac{1}{r}\Delta_{\Gamma_R}\phi,$ we have
\begin{align}\label{dv}
 \nabla\cdot\boldsymbol{v}(r,
\theta, \varphi)=\sum_{n=0}^{\infty}\sum_{m=-n}^{n}\frac{\phi_{n}^{m}}
 {h_{n}^{(1)}(\kappa_{\rm p}R)}\Bigg[\frac{2}{r}\frac{\rm d}{{\rm
d}r}h_{n}^{(1)}(\kappa_{\rm p} r) +\frac{{\rm d}^2}{{\rm
d}r^2}h_{n}^{(1)}(\kappa_{\rm p}r)\notag\\
-\frac{n(n+1)}{r^2}h_{n}^{(1)}
(\kappa_{\rm p}r)\Bigg] X_{n}^{m},
\end{align}
where $\Delta_{\Gamma_R}$ is the Laplace--Beltrami operator on
$\Gamma_R$.

Combining \eqref{bo} and \eqref{pRv}--\eqref{dv}, we obtain 
\begin{align}\label{boe2}
 \mathscr{B}&\boldsymbol{v}=\sum_{n=0}^{\infty}\sum_{m=-n}^{n}
\frac{\mu}{R^2}\Big[\sqrt{n(n+1)} (z_n(\kappa_{\rm p}
R)-1) \phi_n^m-\big(1+z_n(\kappa_{\rm s}R)+(R\kappa_{\rm
s})^2\notag\\
&-n(n+1)\big)\psi_{2n}^m\Big]\boldsymbol{T}_n^m
+\frac{\mu
\kappa_{\rm s}^2}{\sqrt{n(n+1)}}z_n (\kappa_{\rm s}
R)\psi_{3n}^m\boldsymbol{V}_n^m
+ \frac{1}{R^2}\Big[ \mu\big(n(n+1)-(R\kappa_{\rm
p})^2\notag\\
&-2z_n(\kappa_{\rm p} R)\big) \phi_n^m
 +\mu \sqrt{n(n+1)} (z_n(\kappa_{\rm s}
R)-1)\psi_{2n}^m-(\lambda+\mu)(\kappa_{\rm p}
R)^2\phi_n^m\Big]\boldsymbol{W}_n^m.
\end{align}
Comparing \eqref{boe1} with \eqref{boe2}, we have
\begin{equation}\label{wtp}
(w_{1n}^m, w_{2n}^m, w_{3n}^m)^\top
=\frac{1}{R^2}G_n (\phi^{m}_n, \psi_{2n}^m, \psi_{3n}^m)^\top,
\end{equation}
where the matrix
\[
 G_n=\begin{bmatrix}
      0 & 0 & G_{13}^{(n)}\\
      G_{21}^{(n)}& G_{22}^{(n)}&0\\
      G_{31}^{(n)}& G_{32}^{(n)} & 0
     \end{bmatrix}.
\]
Here 
\begin{align*}
 G_{13}^{(n)}&=\frac{\mu(\kappa_{\rm s}R)^{2}z_{n}(\kappa_{\rm
s}R)}{\sqrt{n(n+1)}},\quad
G_{21}^{(n)}=\mu \sqrt{n(n+1)} (z_{n}(\kappa_{\rm p}R)-1),\\
G_{22}^{(n)}&=\mu\left(n(n+1)-(\kappa_{\rm s}R)^2-1-z_{n}
    (\kappa_{\rm s}R)\right),\\
G_{31}^{(n)}&=\mu\left(   n(n+1)-(\kappa_{\rm p}R)^2-2z_{n}(\kappa_{\rm
p}R)\right)-(\lambda+\mu)(\kappa_{\rm p} R)^2,\\
G_{32}^{(n)}&=\mu\sqrt{n(n+1)}(z_{n}(\kappa_{\rm s}R)-1).
\end{align*}

Let $\boldsymbol v_n^m=(v_{1n}^m, v_{2n}^m, v_{3n}^m)^\top, \quad M_n\boldsymbol
v_n^m=\boldsymbol b_n^m=(b_{1n}^m, b_{2n}^m, b_{3n}^m)^\top$,
where the matrix
\[
 M_n=\begin{bmatrix}
          M^{(n)}_{11} &  0 & 0\\
          0 &  M^{(n)}_{22} & M^{(n)}_{23}\\
          0&  M^{(n)}_{32} & M^{(n)}_{33}
         \end{bmatrix}.
\]
Here
\begin{align*}
M^{(n)}_{11}&=\left(\frac{\mu }{R}\right) z_{n}(\kappa_{\rm
s}R),\quad
M^{(n)}_{22}=-\left(\frac{\mu}{R}\right)\left(1+\frac{(\kappa_{\rm
s}R)^2 z_{n}(\kappa_{\rm p}R)}{\Lambda_n}\right),\\[5pt]
M^{(n)}_{23}&=\sqrt{n(n+1)}\left(\frac{\mu}{R}\right)\left(1+\frac{(\kappa_{
\rm s}R)^2}{\Lambda_n}\right),\\[5pt]
M^{(n)}_{32}&=\sqrt{n(n+1)}\left(\frac{\mu}{R}+\frac{(\lambda+2\mu)}{R}\frac{
(\kappa_ { \rm p}R)^2}{\Lambda_n}\right),\\[5pt]
M^{(n)}_{33}&=-\frac{(\lambda+2\mu)}{R}\frac{(\kappa_{\rm
p}R)^2}{\Lambda_n} (1+z_n(\kappa_{\rm s}R))-2\left(\frac{\mu}{R}\right),
\end{align*}
where $\Lambda_n =z_n(\kappa_{\rm p}R)(1+z_n(\kappa_{\rm s} R))-n(n+1)$.

Using the above notation and combining \eqref{wtp} and \eqref{ptv}, we derive
the transparent boundary condition: 
\begin{equation}\label{tbc}
 \mathscr{B}\boldsymbol{v}=\mathscr{T}\boldsymbol{v}:=\sum_{n=0}^{\infty}\sum_{
m=-n}^{n} b_{1n}^m\boldsymbol T_n^m +b_{2n}^m\boldsymbol V_n^m
+b_{3n}^m \boldsymbol W_n^m\quad\text{on} ~
\Gamma_R.
\end{equation}

\begin{lemma}\label{mpd}
 The matrix $\hat{M}_n=-\frac{1}{2}(M_n+M_n^*)$ is positive
definite for sufficiently large $n$.
\end{lemma}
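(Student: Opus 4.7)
The plan is to exploit the block structure of $M_n$: since the scalar entry $M_{11}^{(n)}$ is decoupled from the $2\times 2$ block on the $(2,3)$-components, the same splitting applies to $\hat{M}_n$, and its positive definiteness is equivalent to (i) $-\operatorname{Re} M_{11}^{(n)} > 0$ together with (ii) the $2\times 2$ Hermitian matrix with diagonal entries $-\operatorname{Re} M_{22}^{(n)}$, $-\operatorname{Re} M_{33}^{(n)}$ and off-diagonal entry $-\tfrac{1}{2}(M_{23}^{(n)}+\overline{M_{32}^{(n)}})$ being positive definite. Both will be checked for all $n$ sufficiently large by asymptotic analysis.

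The key input is the large-$n$ behavior of $z_n(t) = t h_n^{(1)'}(t)/h_n^{(1)}(t)$ for fixed $t>0$. Using the recurrence $h_n^{(1)'}(t) = h_{n-1}^{(1)}(t) - (n+1)h_n^{(1)}(t)/t$ together with the standard asymptotic $h_n^{(1)}(t) \sim -{\rm i}\,(2n-1)!!/t^{n+1}$ (whose relative imaginary correction is exponentially small in $n$), one obtains
\[
z_n(t) = -(n+1) + \frac{t^2}{2n-1} + O(n^{-2}),
\]
essentially real to leading order. Plugging this into the scalar entry yields $-\operatorname{Re} M_{11}^{(n)} \sim \mu(n+1)/R > 0$, which settles (i).

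For (ii), the single-term asymptotic $z_n \sim -(n+1)$ produces an exact cancellation in $\Lambda_n = z_n(\kappa_p R)(1+z_n(\kappa_s R)) - n(n+1)$, so the second-order term in $z_n$ is essential. It gives
\[
\Lambda_n \to -\tfrac{1}{2}\bigl[(\kappa_p R)^2 + (\kappa_s R)^2\bigr] =: -C < 0,
\]
after which every entry of the $(2,3)$-block is seen to grow linearly in $n$. Substituting back and using $\mu(\kappa_s R)^2 = (\lambda+2\mu)(\kappa_p R)^2 = \omega^2 R^2$, I would verify that $M_{23}^{(n)}$ and $M_{32}^{(n)}$ share the \emph{same} real leading coefficient, so the Hermitian $2\times 2$ block is, modulo lower order, $n$ times a fixed real symmetric matrix. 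Its diagonal entries are positive, and direct computation reduces its determinant to a positive multiple of $1 - (\lambda+\mu)^2/[4(\lambda+2\mu)^2]$, whose positivity is equivalent to $\lambda+3\mu>0$, which follows from the standing assumptions $\mu>0$ and $\lambda+\mu>0$.

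The principal obstacle is the degeneracy of $\Lambda_n$: one cannot proceed with the leading-order asymptotic of $z_n$ alone, and the whole argument hinges on identifying the correct negative constant limit $-C$. Once this limit is in hand, a routine perturbation argument around the positive definite leading symbol delivers positive definiteness of $\hat{M}_n$ for all sufficiently large $n$.
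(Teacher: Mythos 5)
Your proposal is correct and follows essentially the same route as the paper: expand $z_n(t)=-(n+1)+O(1/n)$ for large $n$, observe that the leading terms cancel in $\Lambda_n$ so that $\Lambda_n$ tends to a negative constant, and then check positive definiteness of $\hat{M}_n$ through its decoupled $(1,1)$ entry and the $2\times 2$ block (equivalently, Sylvester's criterion), whose entries grow linearly in $n$. The only difference is that you carry the final step further than the paper does: using $\mu\kappa_{\rm s}^2=(\lambda+2\mu)\kappa_{\rm p}^2=\omega^2$ you reduce the positivity of the limiting $2\times2$ determinant to the condition $\lambda+3\mu>0$, which indeed follows from $\mu>0$, $\lambda+\mu>0$, whereas the paper simply asserts the inequality $\big(\omega^2R/\Lambda_n\big)^2-\big(\mu/R+\omega^2R/\Lambda_n\big)^2>0$ without spelling out this verification.
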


\begin{proof}
Using the asymptotic expansions of the spherical Bessel functions \cite{w-22},
we may verify that 
\begin{align*}
z_n(t)&=-(n+1)+\frac{1}{16n}t^4+\frac{1}{2n}t^2+O\left(\frac{1}{n^2}\right),\\
\Lambda_n(t)&=-\frac{1}{16}(\kappa_{\rm p}t)^4-\frac{1}{16}(\kappa_{\rm s}t)^4
-\frac{1}{2}(\kappa_{\rm p}t)^2-\frac{1}{2}(\kappa_{\rm s}t)^2
+O\left(\frac{1}{n}\right).
\end{align*}
It follows from straightforward calculations that   
 \[
  \hat{M}_n=\begin{bmatrix}
             \hat{M}^{(n)}_{11} & 0& 0\\
             0 & \hat{M}^{(n)}_{22} & \hat{M}^{(n)}_{23}\\
             0 & \hat{M}^{(n)}_{32} & \hat{M}^{(n)}_{33}
            \end{bmatrix},
 \]
 where
 \begin{align*}
\hat{M}^{(n)}_{11} &
=\left(\frac{\mu}{R}\right)(n+1)+O\left(\frac{1}{n}\right),\quad
\hat{M}^{(n)}_{22} = -\left(\frac{\omega^2
R}{\Lambda_n}\right)(n+1)+O\left(1\right),\\
\hat{M}^{(n)}_{23} &=
-\left(\frac{\mu}{R}+\frac{\omega^2
R}{\Lambda_n}\right) \sqrt{n(n+1)}+O(1),\\
\hat{M}^{(n)}_{32}
&=-\left(\frac{\mu}{R}+\frac{\omega^2
R}{\Lambda_n}\right) \sqrt{n(n+1)}+O(1),\\
\hat{M}^{(n)}_{33}
&=\frac{2\mu}{R}+\frac{\omega^2R}{\Lambda_n}(1+z_n(\kappa_{\rm s} R))=
-\left(\frac{\omega^2 R}{\Lambda_n}\right)n+O(1).
\end{align*}
For sufficiently large $n$, we have 
\[
 \hat{M}^{(n)}_{11}>0\quad\text{and}\quad \hat{M}^{(n)}_{22}>0,
\]
which gives
\[
{\rm
det}[(\hat{M}_n)_{(1:2,1:2)}]=\hat{M}^{(n)}_{11}\hat{M}^{(n)}_{22}>0. 
\]
Since $\Lambda_n<0$ for sufficiently large $n$, we have 
\[
\hat{M}^{(n)}_{22}
 \hat{M}^{(n)}_{33}-\left(\hat{M}^{(n)}_{23}\right)^2=n(n+1)\left[
\left(\frac{\omega^2 R}{\Lambda_n}\right)^2-\left(\frac{\mu}{R}+\frac{\omega^2
R}{\Lambda_n}\right)^2\right]+O(n)>0.
\]
A simple calculation yields 
\[
 {\rm det}[\hat{M}_{n}]=\hat{M}^{(n)}_{11}\left(\hat{M}^{(n)}_{22}
 \hat{M}^{(n)}_{33}-\left(\hat{M}^{(n)}_{23}\right)^2\right)>0,
\]
which completes the proof by applying Sylvester's criterion.
\end{proof}

\begin{lemma}\label{tbo}
The boundary operator $\mathscr{T}: \boldsymbol H^{1/2}(\Gamma_R)\to
\boldsymbol H^{-1/2}(\Gamma_R)$ is continuous, i.e.,
\[
\| \mathscr{T}\boldsymbol{u}\|_{\boldsymbol H^{-1/2}(\Gamma_R)} \lesssim \|
\boldsymbol{u}\|_{\boldsymbol H^{1/2}(\Gamma_R)},\quad\forall\,\boldsymbol{u}\in
\boldsymbol H^{1/2}(\Gamma_R).
  \]
\end{lemma}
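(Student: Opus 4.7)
The plan is to translate continuity of $\mathscr{T}$ into a uniform bound on the matrices $M_n$ acting on the Fourier coefficients, then apply Parseval's identity in the weighted sequence-space characterization of $\boldsymbol H^{\pm 1/2}(\Gamma_R)$. Since $\{\boldsymbol T_n^m, \boldsymbol V_n^m, \boldsymbol W_n^m\}$ is an orthonormal basis, the norm on $\boldsymbol H^s(\Gamma_R)$ (collected in the appendix) is equivalent to
\[
\|\boldsymbol u\|_{\boldsymbol H^s(\Gamma_R)}^2 \;\asymp\; \sum_{n,m}(1+n(n+1))^{s}\bigl(|v_{1n}^m|^2+|v_{2n}^m|^2+|v_{3n}^m|^2\bigr).
\]
So the lemma reduces to proving that the operator norm of the $3\times 3$ matrix $M_n$ on $\mathbb{C}^3$ satisfies $\|M_n\|\lesssim 1+n$ uniformly in $n$.

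To establish this bound I would reuse the asymptotic expansions invoked in Lemma~\ref{mpd}: from $z_n(t)=-(n+1)+O(1/n)$ and from the fact that $\Lambda_n=z_n(\kappa_{\rm p}R)(1+z_n(\kappa_{\rm s}R))-n(n+1)$ tends to a finite nonzero limit (the explicit expansion $\Lambda_n=-\tfrac{1}{16}(\kappa_{\rm p}R)^4-\tfrac{1}{16}(\kappa_{\rm s}R)^4-\tfrac{1}{2}(\kappa_{\rm p}R)^2-\tfrac{1}{2}(\kappa_{\rm s}R)^2+O(1/n)$ given above), one reads off entry by entry:
\[
|M_{11}^{(n)}|\lesssim n,\quad |M_{22}^{(n)}|\lesssim n,\quad |M_{23}^{(n)}|,|M_{32}^{(n)}|\lesssim n,\quad |M_{33}^{(n)}|\lesssim n,
\]
for all $n\geq n_0$, where $n_0$ is chosen so that $\Lambda_n$ is bounded away from zero. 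For the finitely many indices $n<n_0$ the entries of $M_n$ are explicit constants, hence trivially bounded; one only needs to note that $h_n^{(1)}$ has no real zeros so $z_n$ (and thus $\Lambda_n$, if nonzero) is well defined. Combining these, $\|M_n\boldsymbol v_n^m\|_{\mathbb{C}^3}\lesssim (1+n)\|\boldsymbol v_n^m\|_{\mathbb{C}^3}$.

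With this componentwise estimate in hand, the continuity follows from a one-line Parseval computation:
\[
\|\mathscr{T}\boldsymbol u\|_{\boldsymbol H^{-1/2}(\Gamma_R)}^2\;\lesssim\;\sum_{n,m}(1+n(n+1))^{-1/2}\|M_n\boldsymbol v_n^m\|^2\;\lesssim\;\sum_{n,m}(1+n(n+1))^{1/2}\|\boldsymbol v_n^m\|^2\;\asymp\;\|\boldsymbol u\|_{\boldsymbol H^{1/2}(\Gamma_R)}^2,
\]
since $(1+n(n+1))^{-1/2}(1+n)^2\lesssim (1+n(n+1))^{1/2}$.

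The only genuine technical point is verifying the $O(n)$ bound on every entry of $M_n$, in particular on the entries with a $\Lambda_n$ in the denominator; here one must check that $\Lambda_n$ does not vanish for small $n$ and stays bounded away from zero for large $n$. Both are covered by the same asymptotic expansion used in the previous lemma, so this step is essentially bookkeeping rather than a new obstacle; the rest is routine weighted-$\ell^2$ estimation.
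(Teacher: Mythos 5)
Your proposal is correct and takes essentially the same route as the paper: expand in the vector spherical harmonic basis, use the asymptotic expansions of the entries $M_{ij}^{(n)}$ to get the uniform bound $|M_n\boldsymbol u_n^m|\lesssim (1+n(n+1))^{1/2}|\boldsymbol u_n^m|$, and conclude by the weighted $\ell^2$ characterization of $\boldsymbol H^{\pm 1/2}(\Gamma_R)$. One small correction to your bookkeeping remark: the non-vanishing of $\Lambda_n$ for small $n$ is not a consequence of the large-$n$ asymptotics but of the sign condition ${\rm Im}\,\Lambda_n<0$, which follows from the bounds \eqref{zn} on $z_n$ (see the end of the appendix on Fourier coefficients).
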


\begin{proof}
For any given $\boldsymbol u\in \boldsymbol H^{1/2}(\Gamma_R)$, it has the
Fourier expansion
\[
 \boldsymbol u(R, \theta, \varphi)=\sum_{n=0}^\infty\sum_{m=-n}^n u_{1n}^m
\boldsymbol T_n^m(\theta, \varphi) +u_{2n}^m\boldsymbol V_n^m(\theta,
\varphi)+u_{3n}^m \boldsymbol W_n^m(\theta, \varphi). 
\]
Let $\boldsymbol u_n^m=(u_{1n}^m, u_{2n}^m, u_{3n}^m)^\top$. It follows from 
\eqref{tbc} and the asymptotic expansions of $M_{ij}^{(n)}$ that
\begin{align*}
\| \mathscr{T} \boldsymbol{u} \|^2_{\boldsymbol H^{1/2}(\Gamma_R)} &=
\sum_{n=0}^{\infty}\sum_{m=-n}^{n}
\left( 1 + n(n+1) \right)^{-1/2} |M_n \boldsymbol{u}^{m}_{n}|^2 \\
&\lesssim \sum_{n=0}^{\infty}\sum_{m=-n}^{n}\left(1 +
n(n+1)\right)^{1/2}|\boldsymbol{u}^{m}_{n}|^2 = \| \boldsymbol{u}
\|^2_{\boldsymbol H^{1/2}(\Gamma_R)},
\end{align*}
which completes the proof.
\end{proof}

\subsection{Uniqueness}

It follows from the Dirichlet boundary condition \eqref{ubc} and the Helmholtz
decomposition \eqref{hdv} that
\begin{equation}\label{bc}
 \boldsymbol{v}=\nabla\phi+ \nabla\times\boldsymbol{\psi}=-\boldsymbol{u}^{\rm
inc}\quad\text{on} ~ \partial D.
\end{equation}
Taking the dot product and the cross product of \eqref{bc} with the unit normal
vector $\boldsymbol\nu$ on $\partial D$, respectively, we get 
\[
 \partial_{\boldsymbol\nu}\phi
+(\nabla\times\boldsymbol\psi)\cdot\boldsymbol\nu=-u_1,\quad
(\nabla\times\boldsymbol\psi)\times\boldsymbol\nu
+\nabla\phi\times\boldsymbol\nu=-\boldsymbol u_2,
\]
where
\[
 u_1=\boldsymbol u^{\rm inc}\cdot\boldsymbol\nu,\quad \boldsymbol
u_2=\boldsymbol u^{\rm inc}\times\boldsymbol\nu.
\]
We obtain a coupled boundary value problem for the potential functions $\phi$
and
$\boldsymbol\psi$:
\begin{align}\label{cbvp}
  \begin{cases}
    \Delta\phi + \kappa_{\rm p}^2\phi = 0 ,\quad
    \nabla\times(\nabla\times\boldsymbol{\psi}) -
\kappa_{\rm s}^2\boldsymbol{\psi} = 0 ,
&\quad {\rm in}~\Omega,\\
\partial_{\boldsymbol\nu}
\phi+(\nabla\times\boldsymbol\psi)\cdot\boldsymbol\nu=-u_ 1, \quad
(\nabla\times\boldsymbol\psi)\times\boldsymbol\nu+\nabla\phi\times\boldsymbol\nu
=-\boldsymbol u_2 &\quad {\rm on}~\partial D,\\
    \partial_r \phi- \mathscr{T}_1\phi =0 ,\quad
    (\nabla\times \boldsymbol{\psi})\times \boldsymbol e_r-
{\rm i}\kappa_{\rm s}\mathscr{T}_2\boldsymbol{\psi}_{\Gamma_R} =0 &\quad {\rm
on} ~ \Gamma_R.
  \end{cases}
\end{align}
where $\mathscr T_1$ and $\mathscr T_2$ are the transparent boundary operators
given in \eqref{tphi} and \eqref{tpsi}, respectively.

Multiplying test functions $(p, \boldsymbol q)\in H^1(\Omega)\times\boldsymbol
H({\rm curl}, \Omega)$,  we arrive at the weak formulation of
\eqref{cbvp}: To find $(\phi, \boldsymbol{\psi})\in H^1(\Omega)\times
\boldsymbol H({\rm curl}, \Omega)$ such that
\begin{equation}\label{cvp}
 a(\phi, \boldsymbol{\psi}; p, \boldsymbol q)=\langle u_1,
p\rangle_{\partial D}+\langle \boldsymbol{u_2},
\boldsymbol q\rangle_{\partial
D},\quad\forall\, (p, \boldsymbol q)\in H^1(\Omega)\times\boldsymbol
H({\rm curl}, \Omega),
\end{equation}
where the sesquilinear form
\begin{align*}
 a(\phi, \boldsymbol{\psi}; p, \boldsymbol q)=(\nabla\phi,
\nabla p)+ (\nabla\times\boldsymbol{\psi},
\nabla\times\boldsymbol{q})-\kappa^2_{\rm p}(\phi,
p)-\kappa^2_{\rm s}(\boldsymbol{\psi},
\boldsymbol{q})-\langle(\nabla\times\boldsymbol{\psi})\cdot\boldsymbol\nu,
p\rangle_{\partial D}\notag\\
-\langle\nabla\phi\times\boldsymbol\nu,
\boldsymbol{q}\rangle_{\partial
D}-\langle\mathscr{T}_1\phi,
p\rangle_{\Gamma_R}-{\rm
i}\kappa_{\rm s}\langle\mathscr{T}_2\boldsymbol{\psi}_{\Gamma_R},
\boldsymbol{q}_{\Gamma_R}\rangle_{\Gamma_R}.
\end{align*}

\begin{theorem}\label{uni}
 The variational problem \eqref{cvp} has at most one solution.
\end{theorem}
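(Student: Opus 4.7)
The plan is to show that in the homogeneous case ($u_1 = 0$ and $\boldsymbol u_2 = 0$) the only solution of \eqref{cvp} is $(\phi, \boldsymbol\psi) = (0, \boldsymbol 0)$. Rather than working directly with the sesquilinear form $a$, whose $\partial D$ boundary integrals do not obviously reduce to a real quantity upon setting $(p,\boldsymbol q)=(\phi,\boldsymbol\psi)$, I will recombine the potentials into the displacement $\boldsymbol v = \nabla\phi + \nabla\times\boldsymbol\psi$. The homogeneous boundary conditions on $\partial D$ force both the normal and tangential traces of $\nabla\phi + \nabla\times\boldsymbol\psi$ to vanish, so $\boldsymbol v|_{\partial D} = \boldsymbol 0$. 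By the Helmholtz decomposition \eqref{hdv} together with \eqref{he}, this $\boldsymbol v$ solves \eqref{vne} in $\Omega$, and by the very derivation of $\mathscr T$ from $\mathscr T_1$ and $\mathscr T_2$ carried out in the first subsection, the two scalar/vectorial transparent boundary conditions assemble into $\mathscr B\boldsymbol v = \mathscr T\boldsymbol v$ on $\Gamma_R$.

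The next step is a standard energy identity. Multiplying \eqref{vne} by $\bar{\boldsymbol v}$, integrating by parts over $\Omega$, and using $\boldsymbol v|_{\partial D} = \boldsymbol 0$ together with the transparent condition on $\Gamma_R$, I obtain
\[
\int_\Omega \bigl( \mu |\nabla \boldsymbol v|^2 + (\lambda+\mu)|\nabla\cdot\boldsymbol v|^2 - \omega^2 |\boldsymbol v|^2 \bigr)\,{\rm d}\boldsymbol x = \langle \mathscr T \boldsymbol v, \boldsymbol v\rangle_{\Gamma_R}.
\]
Since the left-hand side is real, taking imaginary parts yields $\mathrm{Im}\,\langle \mathscr T \boldsymbol v, \boldsymbol v\rangle_{\Gamma_R} = 0$. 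Expanding $\boldsymbol v|_{\Gamma_R}$ in the orthonormal system $\{\boldsymbol T_n^m, \boldsymbol V_n^m, \boldsymbol W_n^m\}$ and using \eqref{tbc}, this identity becomes a sum over $(n,m)$ of quadratic forms in $\boldsymbol v_n^m$ governed by the anti-Hermitian part of $M_n$. I would check by direct computation, in the spirit of Lemma \ref{mpd}, that these summands all carry the same strict sign, forcing $\boldsymbol v_n^m = \boldsymbol 0$ for every $n,m$ and hence $\boldsymbol v|_{\Gamma_R} = \boldsymbol 0$.

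Armed with $\boldsymbol v|_{\Gamma_R} = \boldsymbol 0$ and $\mathscr B\boldsymbol v|_{\Gamma_R} = \boldsymbol 0$, I would extend $\boldsymbol v$ by zero across $\Gamma_R$ and pass through the Helmholtz decomposition, so that the Sommerfeld-radiating $\phi$ and $\boldsymbol\psi$ carry vanishing Cauchy data on $\Gamma_R$. Rellich's lemma in $\{|\boldsymbol x|>R\}$ together with unique continuation for the Helmholtz equations \eqref{he} across $\Omega$ then yields $\phi \equiv 0$ and $\boldsymbol\psi \equiv \boldsymbol 0$, which is the claimed uniqueness.

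The main obstacle I anticipate is the sign analysis in the second step: Lemma \ref{mpd} controls the Hermitian part of $M_n$ for large $n$, whereas $\mathrm{Im}\,\langle M_n\boldsymbol x, \boldsymbol x\rangle$ is governed by the \emph{anti-Hermitian} part, so a parallel asymptotic computation is required. For the finitely many low $n$ where asymptotics fail, I would either work with explicit spherical Hankel values or, more cleanly, bypass the mode-by-mode estimate entirely and invoke Rellich's uniqueness theorem for radiating solutions on $\{|\boldsymbol x|>R\}$ directly, which only needs the trace information $\boldsymbol v|_{\Gamma_R}=\boldsymbol 0$ obtained from the dominant modes plus the outgoing radiation condition.
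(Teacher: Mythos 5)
Your overall strategy (recombine the potentials into $\boldsymbol v=\nabla\phi+\nabla\times\boldsymbol\psi$, use an energy identity for the Navier system with the DtN condition \eqref{tbc}, then pass to the exterior) is genuinely different from the paper's, but its central step is left unproven, and the proposed fallback does not repair it. Your argument hinges on the claim that $\operatorname{Im}\langle M_n\boldsymbol v_n^m,\boldsymbol v_n^m\rangle$ is sign-definite, with equality only at $\boldsymbol v_n^m=0$, for \emph{every} $n$, i.e.\ that the anti-Hermitian part of $M_n$ is definite for all modes. Nothing in the paper gives this: Lemma \ref{mpd} concerns the Hermitian part and only for large $n$, and the definiteness of the anti-Hermitian part of the coupled $(\boldsymbol T_n^m,\boldsymbol W_n^m)$ block is precisely the nontrivial analytic content that a proof along your lines must supply (it requires Wronskian-type identities for $h_n^{(1)}$ at both wavenumbers, not just the asymptotics used in Lemma \ref{mpd}). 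Your fallback is circular: Rellich's lemma needs either vanishing Cauchy data on $\Gamma_R$ or a vanishing far field, and "$\boldsymbol v|_{\Gamma_R}=0$ obtained from the dominant modes" is exactly what is not available --- the finitely many low modes are the ones the asymptotics do not control, and $\operatorname{Im}\langle\mathscr T\boldsymbol v,\boldsymbol v\rangle_{\Gamma_R}=0$ by itself yields a far-field/flux statement only after an additional limiting argument at infinity for outgoing elastic waves, which you do not carry out. There are also secondary points you would need to address: the energy identity for $\boldsymbol v$ requires $\boldsymbol v\in\boldsymbol H^1$ up to the boundary, which is not immediate for a weak solution $(\phi,\boldsymbol\psi)\in H^1(\Omega)\times\boldsymbol H({\rm curl},\Omega)$ (the paper's argument never needs this because it tests the variational identity with the solution itself), and at the end one must still return from $\boldsymbol v$ to the actual unknowns $(\phi,\boldsymbol\psi)$, e.g.\ via \eqref{ptv} and the boundary conditions on $\Gamma_R$.

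By contrast, the paper's proof deliberately avoids the coupled matrix $M_n$: it sets $(p,\boldsymbol q)=(\phi,\boldsymbol\psi)$ in \eqref{cvp}, observes that the $\partial D$ coupling terms combine into $2\,{\rm Re}\langle\nabla\phi\times\boldsymbol\nu,\boldsymbol\psi\rangle_{\partial D}$, and takes the imaginary part to obtain ${\rm Im}\langle\mathscr T_1\phi,\phi\rangle_{\Gamma_R}+\kappa_{\rm s}{\rm Re}\langle\mathscr T_2\boldsymbol\psi_{\Gamma_R},\boldsymbol\psi_{\Gamma_R}\rangle_{\Gamma_R}=0$, where the two terms are separately sign-definite for \emph{all} $n$ thanks to the scalar bounds \eqref{zn} (Lemmas \ref{bt1} and \ref{bt2}); the conclusion then follows from Holmgren and unique continuation. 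In other words, the decoupling into $\mathscr T_1$ and $\mathscr T_2$ is what reduces the sign analysis to known one-dimensional facts about $z_n$; your route puts that whole burden back onto the uncontrolled anti-Hermitian part of $M_n$, and until that computation (or a bona fide flux-at-infinity argument giving vanishing P- and S-far fields) is supplied, the proof is incomplete.
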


\begin{proof}
It suffices to show that $\phi=0, \boldsymbol{\psi}=0$ in $\Omega$ if
$u_1=0, \boldsymbol{u_2}=0$ on $\partial D$. If
$(\phi, \boldsymbol{\psi})$ satisfy the homogeneous variational problem
\eqref{cvp}, then
we have
\begin{align}\label{uni_1}
(\nabla\phi, \nabla\phi)+(\nabla\times\boldsymbol{\psi},
\nabla\times\boldsymbol{\psi})-\kappa^2_{\rm p}(\phi,
\phi)-\kappa^2_{\rm s}(\boldsymbol{\psi}, \boldsymbol{\psi})
-\langle (\nabla\times\boldsymbol{\psi})\cdot\boldsymbol\nu,
\phi\rangle_{\partial D}\notag\\
-\langle \nabla\phi\times\boldsymbol\nu,
\boldsymbol{\psi}\rangle_{\partial D}-\langle\mathscr{T}_1\phi,
\phi\rangle_{\Gamma_R}-{\rm
i}\kappa_{\rm s}\langle\mathscr{T}_2\boldsymbol{\psi}_{\Gamma_R},
\boldsymbol{\psi}_{\Gamma_R}\rangle_{\Gamma_R}=0.
\end{align}
Using the integration by parts, we may verify that 
\[
\langle(\nabla\times\boldsymbol{\psi})\cdot\boldsymbol\nu, \phi\rangle_{\partial
D}=-\langle\boldsymbol{\psi}, \boldsymbol\nu\times\nabla\phi\rangle_{\partial
D}=\langle\boldsymbol{\psi}, \nabla\phi\times\boldsymbol\nu\rangle_{\partial
D},
\]
which gives 
\begin{equation}\label{uni_2}
\langle(\nabla\times\boldsymbol{\psi})\cdot\boldsymbol\nu, \phi\rangle_{\partial
D}+\langle\nabla\phi\times\boldsymbol\nu, \boldsymbol{\psi}\rangle_{\partial
D}=2{\rm Re}\langle\nabla\phi\times\boldsymbol\nu,
\boldsymbol{\psi}\rangle_{\partial D}.
\end{equation}
Taking the imaginary part of \eqref{uni_1} and using \eqref{uni_2}, we obtain
\[
 {\rm Im}\langle\mathscr{T}_1\phi, \phi\rangle_{\Gamma_R}+\kappa_{\rm s}{\rm
Re}\langle\mathscr{T}_2\boldsymbol{\psi}_{\Gamma_R},
\boldsymbol{\psi}_{\Gamma_R}\rangle_{\Gamma_R}=0,
\]
which gives $\phi=0, \boldsymbol{\psi}=0$ on $\Gamma_R$, due to Lemma \ref{bt1}
and Lemma \ref{bt2}. Using \eqref{tphi} and \eqref{tpsi},  we
have $\partial_r\phi=0, (\nabla\times\boldsymbol{\psi})
\times \boldsymbol e_r=0$ on $\Gamma_R$. By the Holmgren
uniqueness theorem, we have $\phi=0, \boldsymbol{\psi}=0$ in
$\mathbb{R}^3\setminus\bar{B}$. A unique continuation result concludes that
$\phi=0, \boldsymbol{\psi}=0$ in $\Omega$.
\end{proof}

\subsection{Well-posedness}

Using the transparent boundary condition \eqref{tbc}, we obtain a boundary
value problem for $\boldsymbol{u}$:
\begin{align}\label{bvp}
\begin{cases}
\mu\Delta\boldsymbol{u}+(\lambda+\mu)\nabla\nabla\cdot\boldsymbol{u}
+\omega^2\boldsymbol{u}=0 &\quad\text{in} ~ \Omega,\\
\boldsymbol{u}=0 &\quad\text{on} ~ \partial D,\\
\mathscr{B}\boldsymbol{u}=\mathscr{T}\boldsymbol{u}+\boldsymbol{g}
&\quad\text{on} ~\Gamma_R,
\end{cases}
\end{align}
where $\boldsymbol{g}=(\mathscr{B}-\mathscr{T})\boldsymbol{u}^{\rm inc}$. The
variational problem of \eqref{bvp} is to find $\boldsymbol{u}\in
\boldsymbol H^1_{\partial D}(\Omega)$ such that
\begin{equation}\label{vp}
 b(\boldsymbol{u}, \boldsymbol{v})=\langle\boldsymbol{g},
\boldsymbol{v}\rangle_{\Gamma_R},\quad\forall\,
\boldsymbol{v}\in\boldsymbol H^1_{\partial D}(\Omega),
\end{equation}
where the sesquilinear form $b: \boldsymbol H^1_{\partial D}(\Omega)\times 
\boldsymbol H^1_{\partial D}(\Omega)\to\mathbb{C}$ is defined by
\begin{align*}
 b(\boldsymbol{u}, \boldsymbol{v})=\mu \int_\Omega \nabla\boldsymbol{u}:
\nabla\bar{\boldsymbol{v}}\,{\rm d}\boldsymbol{x}+(\lambda+\mu)\int_\Omega
(\nabla\cdot\boldsymbol{u})(\nabla\cdot\bar{\boldsymbol v})\,{\rm
d}\boldsymbol{x}\notag\\
-\omega^2\int_\Omega \boldsymbol{u}\cdot\bar{\boldsymbol{v}}\,{\rm
d}\boldsymbol{x}-\langle\mathscr{T}\boldsymbol{u},
\boldsymbol{v}\rangle_{\Gamma_R}.
\end{align*}
Here $A:B={\rm tr}(A B^\top)$ is the Frobenius inner product of square matrices
$A$ and $B$.

The following result follows from the standard trace theorem of the
Sobolev spaces. The proof is omitted for brevity. 

\begin{lemma}\label{tr}
It holds the estimate
\begin{align*}
\|\boldsymbol{u}\|_{\boldsymbol H^{1/2}(\Gamma_R)}
\lesssim\|\boldsymbol{u}\|_{\boldsymbol H^1(\Omega)}, \quad \forall\,
\boldsymbol{u} \in\boldsymbol H_{\partial
D}^1(\Omega).
\end{align*}
\end{lemma}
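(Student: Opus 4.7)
The plan is to reduce this to the classical scalar trace theorem for Lipschitz domains. First I note that $\Omega = B_R \setminus \bar D$ has Lipschitz boundary $\partial\Omega = \partial D \cup \Gamma_R$, since $\partial D$ is assumed Lipschitz and $\Gamma_R$ is smooth. The standard scalar trace theorem then supplies a continuous restriction map $\gamma: H^1(\Omega) \to H^{1/2}(\partial\Omega)$.

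Next I would apply this component-wise. For each Cartesian component $u_i$ of $\boldsymbol u$, we have $u_i \in H^1(\Omega)$ and
\[
\|u_i\|_{H^{1/2}(\Gamma_R)} \le \|u_i\|_{H^{1/2}(\partial\Omega)} \lesssim \|u_i\|_{H^1(\Omega)},
\]
where the first inequality uses that $\Gamma_R$ is a (relatively open) Lipschitz piece of $\partial\Omega$, so that restriction $H^{1/2}(\partial\Omega)\to H^{1/2}(\Gamma_R)$ is bounded. Squaring, summing over $i=1,2,3$, and taking square roots then yields the vector estimate.

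The one detail that does deserve attention is the compatibility of norms: the vector norm $\|\cdot\|_{\boldsymbol H^{1/2}(\Gamma_R)}$ used elsewhere in the paper is defined through a weighted sum with factors $(1+n(n+1))^{1/2}$ in the basis $\{\boldsymbol T_n^m, \boldsymbol V_n^m, \boldsymbol W_n^m\}$, and I need this to agree, up to equivalence, with the intrinsic $H^{1/2}$ norm inherited from $\partial\Omega$. Since $n(n+1)/R^2$ are precisely the eigenvalues of $-\Delta_{\Gamma_R}$ on the sphere and the basis in question diagonalizes the vector Laplace--Beltrami operator on tangential and normal components, this equivalence is standard; it is the content of the spherical harmonics appendix that the authors invoke throughout.

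The hypothesis that $\boldsymbol u$ vanishes on $\partial D$ (encoded in the space $\boldsymbol H^1_{\partial D}(\Omega)$) is not actually used in the argument; the estimate holds for any $\boldsymbol u \in \boldsymbol H^1(\Omega)$. So the only non-routine ingredient is the norm equivalence on $\Gamma_R$ just mentioned, and past that point the result is essentially a one-line application of the Sobolev trace theorem, which is why the authors choose to omit the proof.
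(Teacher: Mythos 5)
Your argument is correct and matches the paper's intent: the authors state that Lemma \ref{tr} ``follows from the standard trace theorem of the Sobolev spaces'' and omit the proof, which is exactly the component-wise trace argument you give, supplemented by the (standard) equivalence of the spectrally defined $\boldsymbol H^{1/2}(\Gamma_R)$ norm with the intrinsic trace norm. Your observations that $\Gamma_R$ is a separate component of $\partial\Omega$ and that the vanishing on $\partial D$ is not needed are both accurate.
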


\begin{lemma}\label{tr0}
For any $\varepsilon>0$, there exists a positive constant $C(\varepsilon)$ such
that
\begin{align*}
\|\boldsymbol{u}\|_{\boldsymbol L^2(\Gamma_R)} \leq
\varepsilon\|\boldsymbol{u}\|_{\boldsymbol
H^1(\Omega)}+C(\varepsilon)\|\boldsymbol{u}\|_{\boldsymbol L^2(\Omega)}, \quad
\forall\, \boldsymbol{u} \in\boldsymbol H_{\partial D}^1(\Omega).
  \end{align*}
\end{lemma}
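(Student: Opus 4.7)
The plan is to prove this standard trace inequality by the classical divergence-theorem argument combined with Young's inequality, an approach that is well suited here because the members of $\boldsymbol H^1_{\partial D}(\Omega)$ vanish on $\partial D$, so the only nontrivial boundary contribution comes from $\Gamma_R$.

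First I would construct an auxiliary vector field $\boldsymbol\beta\in C^1(\bar\Omega;\mathbb{R}^3)$ satisfying $\boldsymbol\beta\cdot\boldsymbol\nu=1$ on $\Gamma_R$ and $\boldsymbol\beta=\boldsymbol 0$ in a neighborhood of $\partial D$. Such a field exists because $\partial D$ and $\Gamma_R$ are disjoint closed sets: take $\boldsymbol\beta=\chi(|\boldsymbol x|)\,\boldsymbol e_r$ with $\chi$ a smooth cutoff equal to $1$ near $|\boldsymbol x|=R$ and equal to $0$ near $\partial D$. Both $\boldsymbol\beta$ and $\nabla\cdot\boldsymbol\beta$ are then bounded on $\bar\Omega$.

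Next, for $\boldsymbol u\in C^\infty(\bar\Omega)\cap\boldsymbol H^1_{\partial D}(\Omega)$, I would apply the divergence theorem to the vector field $|\boldsymbol u|^2\boldsymbol\beta$. Since $\boldsymbol\beta$ vanishes near $\partial D$, the only surface contribution is on $\Gamma_R$, giving
\[
\int_{\Gamma_R}|\boldsymbol u|^2\,{\rm d}S=\int_\Omega|\boldsymbol u|^2(\nabla\cdot\boldsymbol\beta)\,{\rm d}\boldsymbol x+2\,\mathrm{Re}\int_\Omega\bigl((\boldsymbol\beta\cdot\nabla)\boldsymbol u\bigr)\cdot\bar{\boldsymbol u}\,{\rm d}\boldsymbol x.
\]
Bounding the first term by $\|\nabla\cdot\boldsymbol\beta\|_{L^\infty}\|\boldsymbol u\|^2_{\boldsymbol L^2(\Omega)}$ and applying Young's inequality $2ab\leq\eta a^2+\eta^{-1}b^2$ to the second term yields
\[
\|\boldsymbol u\|^2_{\boldsymbol L^2(\Gamma_R)}\leq\eta\|\boldsymbol\beta\|^2_{L^\infty}\|\nabla\boldsymbol u\|^2_{\boldsymbol L^2(\Omega)}+C(\eta)\|\boldsymbol u\|^2_{\boldsymbol L^2(\Omega)}
\]
for any $\eta>0$. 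Choosing $\eta=\varepsilon^2/\|\boldsymbol\beta\|^2_{L^\infty}$, taking square roots, and using $\sqrt{a^2+b^2}\leq a+b$ for $a,b\geq 0$ gives the desired inequality. A density argument (smooth functions vanishing near $\partial D$ are dense in $\boldsymbol H^1_{\partial D}(\Omega)$, and both sides of the estimate are continuous in the $\boldsymbol H^1$-norm by Lemma \ref{tr}) extends the bound to the whole space.

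I do not expect any genuine obstacle in this proof; the only non-automatic step is the construction of the cutoff vector field $\boldsymbol\beta$, but this is routine given that $\partial D\subset B_R$ with positive separation from $\Gamma_R$. Everything else is bookkeeping on the divergence-theorem identity and an application of Young's inequality to separate the gradient from the $L^2$ contribution.
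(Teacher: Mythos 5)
Your proof is correct, but it takes a genuinely different route from the paper's. The paper exploits the condition $\boldsymbol u=0$ on $\partial D$ to extend $\boldsymbol u$ by zero into a spherical shell $\tilde\Omega=B_R\setminus\bar B'$ with $\bar B'\subset D$, expands the extension in the vector spherical harmonics $\boldsymbol T_n^m,\boldsymbol V_n^m,\boldsymbol W_n^m$, and bounds each radial coefficient at $r=R$ by the fundamental theorem of calculus in $r$ plus Young's inequality, then sums over $n,m$; this keeps the argument inside the Fourier framework used for the transparent boundary operators elsewhere in the paper. You instead use the classical multiplier (divergence-theorem) trick with a cutoff field $\boldsymbol\beta=\chi(|\boldsymbol x|)\,\boldsymbol e_r$ satisfying $\boldsymbol\beta\cdot\boldsymbol e_r=1$ on $\Gamma_R$ and $\boldsymbol\beta=0$ near $\partial D$, followed by Young's inequality with a small parameter. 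Your argument is more elementary and more general: it uses neither the spherical geometry of $\Gamma_R$ nor, in fact, the vanishing of $\boldsymbol u$ on $\partial D$, since $\boldsymbol\beta\equiv 0$ near $\partial D$ kills that boundary term automatically. For the same reason you can simplify your closing density step: rather than invoking the (true, but not entirely obvious) density of smooth functions vanishing near $\partial D$ in $\boldsymbol H^1_{\partial D}(\Omega)$, it suffices to use density of $C^\infty(\bar\Omega)$ in $\boldsymbol H^1(\Omega)$ on the Lipschitz domain $\Omega$ together with continuity of both sides in the $\boldsymbol H^1$-norm, and the estimate then holds for all of $\boldsymbol H^1(\Omega)$, which contains $\boldsymbol H^1_{\partial D}(\Omega)$. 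What the paper's spectral proof buys in exchange is coefficient-level consistency with the rest of its analysis; what yours buys is brevity and independence from the special geometry.
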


\begin{proof}
Let $B'$ be the ball with radius $R'>0$ such that $\bar{B}'\subset D$. Denote
$\tilde\Omega=B \setminus \bar{B}'$. Given $\boldsymbol u\in\boldsymbol
H^1_{\partial D}(\Omega)$, let $\tilde{\boldsymbol u}$ be the
zero extension of $\boldsymbol{u}$ from $\Omega$ to $\tilde{\Omega}$, i.e.,
 \begin{align*}
    \tilde{\boldsymbol u} (\boldsymbol x) =
    \begin{cases}
      \boldsymbol{u} (\boldsymbol x), & \boldsymbol x\in \Omega,\\[5pt]
      0, & \boldsymbol x\in \tilde{\Omega} \setminus \bar{\Omega}.
    \end{cases}
  \end{align*}
The extension of $\tilde{\boldsymbol u}$ has the Fourier expansion
\[
\tilde{\boldsymbol
u}(r,\theta,\varphi)=\sum_{n=0}^{\infty}\sum_{m=-n}^{n}\tilde{u}_{1n}^m
(r)\boldsymbol{T}_{n}^{m}(\theta,\varphi)+
\tilde{u}_{2n}^m(r)\boldsymbol{V}_{n}^{m}(\theta,\varphi)+\tilde{u}_{3n}^m(r)
\boldsymbol{W}_{n}^{m}(\theta,\varphi).
\]
A simple calculation yields 
\[
\|\tilde{\boldsymbol u}\|^2_{\boldsymbol
L^2(\Gamma_R)}=\sum_{n=0}^{\infty}\sum_{
m=-n}^{n} |\tilde{u}_{1n}^{m}(R)|^2+ |\tilde{u}_{2n}^{m}(R)|^2+
|\tilde{u}_{3n}^{m}(R)|^2.
\]
Since $\tilde{\boldsymbol u}(R', \theta, \varphi)=0$, we have
$\tilde{u}_{jn}^m(R')=0$. For any given $\varepsilon>0$, it follows
from Young's
inequality that 
\begin{align*}
|\tilde{u}_{jn}^{m}(R)|^2
&=\int_{R'}^{R}\frac{{\rm d}}{{\rm d}r}|\tilde{u}_{jn}^m(r)|^2
{\rm d}r\leq\int_{R'}^{R}2|\tilde{u}_{jn}^m(r)|\left|\frac{{\rm d}}
{{\rm d}r}\tilde{u}_{jn}^m(r)\right|{\rm d}r\\
&\leq \left(R'\varepsilon\right)^{-2}\int_{R'}^{R}|\tilde{u}_{jn}^m(r)|^2
 {\rm d}r+\left(R'\varepsilon\right)^2\int_{R'}^{R}\left|\frac{{\rm d}}{{\rm
d}r}\tilde{u}_{jn}^m(r)\right|^2{\rm d}r,
\end{align*}
which gives
\[
|\tilde{u}_{jn}^{m}(R)|^2\leq
C(\varepsilon)\int_{R'}^{R}|\tilde{u}_{jn}^m(r)|^2
 r^2{\rm d}r+\varepsilon^2\int_{R'}^{R}\left|\frac{{\rm d}}{{\rm dr}}
 \tilde{u}_{jn}^m(r)\right|^2 r^2{\rm d}r.
\]
The proof is completed by noting that
\[
\|\tilde{\boldsymbol{u}}\|_{\boldsymbol
L^2(\Gamma_R)}=\|\boldsymbol{u}\|_{\boldsymbol L^2(\Gamma_R)},\quad
\|\tilde{\boldsymbol{u}}\|_{\boldsymbol
L^2(\tilde\Omega)}=\|\boldsymbol{u}\|_{\boldsymbol L^2(\Omega)}, \quad
\|\tilde{\boldsymbol{u}}\|_{\boldsymbol
H^1(\tilde\Omega)}=\|\boldsymbol{u}\|_{\boldsymbol H^1(\Omega)}. 
\]
\end{proof}

\begin{lemma} \label{po}
It holds the estimate
\begin{align*}
\|\boldsymbol{u}\|_{\boldsymbol H^1(\Omega)} \lesssim
\|\nabla\boldsymbol{u}\|_{\boldsymbol L^2(\Omega)},
\quad \forall\,\boldsymbol{u}\in\boldsymbol H_{\partial D}^1(\Omega).
\end{align*}
\end{lemma}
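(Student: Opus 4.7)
The plan is to prove this Poincaré--Friedrichs inequality by the standard Rellich compactness argument. Since $\|\boldsymbol{u}\|_{\boldsymbol H^1(\Omega)}^2 = \|\boldsymbol{u}\|_{\boldsymbol L^2(\Omega)}^2 + \|\nabla\boldsymbol{u}\|_{\boldsymbol L^2(\Omega)}^2$, it suffices to establish
\[
\|\boldsymbol{u}\|_{\boldsymbol L^2(\Omega)} \lesssim \|\nabla\boldsymbol{u}\|_{\boldsymbol L^2(\Omega)} \qquad \forall\,\boldsymbol{u}\in \boldsymbol H^1_{\partial D}(\Omega),
\]
after which the desired $\boldsymbol H^1$ bound is immediate.

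First I would argue by contradiction. If no such constant existed, there would be a sequence $\{\boldsymbol{u}_k\}\subset \boldsymbol H^1_{\partial D}(\Omega)$ with $\|\boldsymbol{u}_k\|_{\boldsymbol L^2(\Omega)}=1$ and $\|\nabla\boldsymbol{u}_k\|_{\boldsymbol L^2(\Omega)}\to 0$. In particular the sequence is bounded in $\boldsymbol H^1(\Omega)$. Because $\Omega=B_R\setminus\bar D$ is a bounded Lipschitz domain, the Rellich--Kondrachov compact embedding $\boldsymbol H^1(\Omega)\hookrightarrow\hookrightarrow \boldsymbol L^2(\Omega)$ yields a subsequence (not relabeled) converging in $\boldsymbol L^2(\Omega)$ to some $\boldsymbol{u}^\star$. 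Together with $\nabla\boldsymbol{u}_k\to 0$ in $\boldsymbol L^2(\Omega)$, this upgrades to convergence in $\boldsymbol H^1(\Omega)$, so $\nabla\boldsymbol{u}^\star=0$ in $\Omega$ and $\|\boldsymbol{u}^\star\|_{\boldsymbol L^2(\Omega)}=1$.

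The final step is to rule out such a limit. Since $\boldsymbol H^1_{\partial D}(\Omega)$ is the kernel of the continuous trace operator on $\partial D$, it is closed in $\boldsymbol H^1(\Omega)$, so $\boldsymbol{u}^\star\in\boldsymbol H^1_{\partial D}(\Omega)$. Extending $\boldsymbol{u}^\star$ by zero across $\partial D$ into $D$ produces a function in $\boldsymbol H^1(B_R)$ (the matching zero trace on $\partial D$ from both sides ensures no singular distributional derivative is created), whose gradient vanishes in $\Omega$ and vanishes in $D$, hence vanishes on all of $B_R$. Thus the extension is constant on the connected set $B_R$, and since it equals $0$ in $D$ it equals $0$ everywhere, contradicting $\|\boldsymbol{u}^\star\|_{\boldsymbol L^2(\Omega)}=1$.

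The main thing to be careful about is the justification that the zero extension actually lies in $\boldsymbol H^1(B_R)$ rather than merely $\boldsymbol L^2(B_R)$; this is the standard consequence of vanishing trace on a Lipschitz interface $\partial D$ and is what lets us sidestep any direct analysis of the connected components of $\Omega$. No further ingredient is needed, so I do not foresee a substantive obstacle beyond the usual care with traces on a Lipschitz boundary.
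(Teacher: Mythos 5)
Your argument is correct, but it takes a genuinely different route from the paper's. The paper proves the inequality directly and constructively: it extends $\boldsymbol{u}$ by zero from $\Omega$ into the spherical shell $\tilde\Omega=B_R\setminus\bar{B}'$, where $B'$ is a ball of radius $R'$ with $\bar{B}'\subset D$, writes $\tilde{\boldsymbol u}(r,\theta,\varphi)=\int_{R'}^{r}\partial_r\tilde{\boldsymbol u}\,{\rm d}r$ along radial lines (the extension vanishes near $r=R'$), and applies Cauchy--Schwarz to get $\|\tilde{\boldsymbol u}\|_{\boldsymbol L^2(\tilde\Omega)}\lesssim\|\nabla\tilde{\boldsymbol u}\|_{\boldsymbol L^2(\tilde\Omega)}$, after which the norm identities for the zero extension give the claim with an explicit constant depending only on $R$ and $R'$. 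You instead run the standard compactness/contradiction argument: Rellich--Kondrachov on the bounded Lipschitz domain $\Omega$, passage to a limit with vanishing gradient and unit $\boldsymbol L^2$ norm, closedness of $\boldsymbol H^1_{\partial D}(\Omega)$, and then the zero-trace condition on $\partial D$ to extend by zero into $D$ and use connectedness of $B_R$ to force the limit to vanish. Both proofs rest on the same underlying trace fact --- zero trace on the Lipschitz interface $\partial D$ allows an $H^1$ zero extension across it --- and both are complete; the paper's version buys an explicit constant and avoids compactness machinery entirely, while yours is computation-free, works verbatim for any bounded Lipschitz domain with Dirichlet data on part of its boundary (and sidesteps any discussion of the geometry of $\Omega$), at the price of a non-constructive constant.
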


\begin{proof}
As is defined in the proof of Lemma \ref{tr0}, let $\tilde{\boldsymbol u}$ be
the zero extension of $\boldsymbol{u}$ from $\Omega$ to $\tilde\Omega$. It
follows from the Cauchy--Schwarz inequality that
\begin{align*}
 |\tilde{\boldsymbol u}(r, \theta, \varphi)|^2 = \left| \int_{R'}^r \partial_r
\tilde{\boldsymbol u}(r, \theta, \varphi){\rm d}r \right|^2 \lesssim
\int_{R'}^R \left|\partial_r \tilde{\boldsymbol u}(r, \theta, \varphi) \right|^2
{\rm d}r.
\end{align*}
Hence we have
\begin{align*}
\|\tilde{\boldsymbol{u}}\|^2_{\boldsymbol L^2(\tilde{\Omega})} &=
\int_{R'}^{R}\int_{0}^{2\pi}
\int_{0}^{\pi}|\tilde{\boldsymbol{u}}(r, \theta, \varphi)|^2
r^2{\rm d}r{\rm d}\theta{\rm d}\varphi\lesssim\int_{R'}^{R}\int_{0}^{2\pi}
\int_{0}^{\pi}\int_{R'}^{R}|\partial_r \tilde{\boldsymbol{u}}(r, \theta,
\varphi)|^2{\rm d}r{\rm d}\theta{\rm d}\varphi{\rm d}r\\
&\lesssim \int_{R'}^{R}\int_{0}^{2\pi}
\int_{0}^{\pi}|\partial_r \tilde{\boldsymbol{u}}(r, \theta, \varphi)|^2{\rm
d}r{\rm d}\theta{\rm d}\varphi\lesssim
\|\nabla\tilde{\boldsymbol{u}}\|^2_{\boldsymbol L^2(\tilde{\Omega})}.
\end{align*}
The proof is completed by noting that
\[
\| \boldsymbol{u} \|_{\boldsymbol L^2(\Omega)}=\| \tilde{\boldsymbol u}
\|_{\boldsymbol L^2(\tilde{\Omega})},\quad \|\nabla
\boldsymbol{u}\|_{\boldsymbol L^2(\Omega)}=\|\nabla
\tilde{\boldsymbol u}\|_{\boldsymbol L^2(\tilde{\Omega})},\quad
\|\boldsymbol{u}\|_{\boldsymbol H^1(\Omega)}^2 =
\|\boldsymbol{u}\|_{\boldsymbol L^2(\Omega)}^2 +\|\nabla
\boldsymbol{u}\|_{\boldsymbol L^2(\Omega)}^2.
\]
\end{proof}

\begin{theorem}\label{wp}
The variational problem \eqref{vp} admits a unique weak solution
$\boldsymbol{u}\in\boldsymbol H^1_{\partial D}(\Omega)$.
\end{theorem}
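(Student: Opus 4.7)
The plan is to apply the Fredholm alternative after splitting $b$ into a coercive part plus a compact perturbation, with the splitting driven by Lemma \ref{mpd}. Let $N$ be so large that the matrix $\hat M_n$ is positive-definite for all $n\geq N$. Introduce the truncated boundary operator
\[
\mathscr{T}_N \boldsymbol u = \sum_{n\geq N}\sum_{m=-n}^{n}
b_{1n}^m\boldsymbol T_n^m+b_{2n}^m\boldsymbol V_n^m+b_{3n}^m\boldsymbol W_n^m,
\]
and write $\mathscr{T}_C=\mathscr{T}-\mathscr{T}_N$, which involves only finitely many spherical harmonics and is therefore a finite-rank (hence compact) operator from $\boldsymbol H^{1/2}(\Gamma_R)$ to $\boldsymbol H^{-1/2}(\Gamma_R)$. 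Correspondingly, split $b=b_1-b_2$ where
\[
b_1(\boldsymbol u,\boldsymbol v)=\mu(\nabla\boldsymbol u,\nabla\boldsymbol v)
+(\lambda+\mu)(\nabla\cdot\boldsymbol u,\nabla\cdot\boldsymbol v)
-\langle\mathscr{T}_N\boldsymbol u,\boldsymbol v\rangle_{\Gamma_R},
\]
\[
b_2(\boldsymbol u,\boldsymbol v)=\omega^2(\boldsymbol u,\boldsymbol v)
+\langle\mathscr{T}_C\boldsymbol u,\boldsymbol v\rangle_{\Gamma_R}.
\]

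The first step is to verify coercivity of $b_1$ on $\boldsymbol H^1_{\partial D}(\Omega)$. Using the Fourier representation of $\mathscr{T}$, one sees that $-\mathrm{Re}\langle\mathscr{T}_N\boldsymbol u,\boldsymbol u\rangle_{\Gamma_R}=\sum_{n\geq N,m}(\boldsymbol u_n^m)^{*}\hat M_n\boldsymbol u_n^m\geq 0$ by Lemma \ref{mpd}. Hence $\mathrm{Re}\,b_1(\boldsymbol u,\boldsymbol u)\geq \mu\|\nabla\boldsymbol u\|_{\boldsymbol L^2(\Omega)}^2$, which by the Poincaré-type estimate in Lemma \ref{po} dominates $\|\boldsymbol u\|_{\boldsymbol H^1(\Omega)}^2$. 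Combined with continuity (from Lemma \ref{tbo} and the trace estimate in Lemma \ref{tr}), the Lax--Milgram lemma yields an isomorphism $\mathcal A:\boldsymbol H^1_{\partial D}(\Omega)\to(\boldsymbol H^1_{\partial D}(\Omega))^{*}$ representing $b_1$.

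Next, identify $b_2$ with a compact operator $\mathcal K$ on $\boldsymbol H^1_{\partial D}(\Omega)$. The volume term $\omega^2(\boldsymbol u,\boldsymbol v)$ gives a compact operator via the Rellich embedding $\boldsymbol H^1(\Omega)\hookrightarrow\boldsymbol L^2(\Omega)$, and $\langle\mathscr{T}_C\boldsymbol u,\boldsymbol v\rangle_{\Gamma_R}$ is compact because $\mathscr{T}_C$ has finite rank and the trace from $\boldsymbol H^1_{\partial D}(\Omega)$ into $\boldsymbol H^{1/2}(\Gamma_R)$ is continuous. The operator equation corresponding to \eqref{vp} then reads $(\mathcal A-\mathcal K)\boldsymbol u=\mathcal F$, equivalently $(I-\mathcal A^{-1}\mathcal K)\boldsymbol u=\mathcal A^{-1}\mathcal F$, which is of Fredholm type with index zero.

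Finally, invoke uniqueness. If $\boldsymbol u\in\boldsymbol H^1_{\partial D}(\Omega)$ solves the homogeneous problem, apply the Helmholtz decomposition \eqref{hdv} in $\Omega$ to write $\boldsymbol u=\nabla\phi+\nabla\times\boldsymbol\psi$ with $\nabla\cdot\boldsymbol\psi=0$; the decomposed potentials satisfy the coupled variational problem \eqref{cvp} with $u_1=0$ and $\boldsymbol u_2=0$, so Theorem \ref{uni} forces $\phi=0$ and $\boldsymbol\psi=0$ in $\Omega$, and hence $\boldsymbol u=0$. Thus $\mathcal A-\mathcal K$ is injective and, by Fredholm, bijective, establishing existence and uniqueness of the weak solution. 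The main technical hurdle is the first step: correctly organizing the splitting of $\mathscr{T}$ so that the truncated piece contributes a non-negative sign to $\mathrm{Re}\,b_1(\boldsymbol u,\boldsymbol u)$ while the remainder is genuinely compact; everything else is a clean application of Lax--Milgram, Rellich, and the previously established uniqueness.
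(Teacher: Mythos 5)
Your argument is correct, and it rests on the same three pillars as the paper's proof: the positive definiteness of $\hat M_n$ for large $n$ (Lemma \ref{mpd}), the Poincar\'e-type estimate of Lemma \ref{po}, and the uniqueness of Theorem \ref{uni} fed into a Fredholm argument. The difference is in how the finitely many low-order boundary modes are handled. The paper keeps the whole boundary term inside one quadratic form and proves only a G\aa rding inequality, ${\rm Re}\,b(\boldsymbol u,\boldsymbol u)\gtrsim \|\boldsymbol u\|^2_{\boldsymbol H^1(\Omega)}-C\|\boldsymbol u\|^2_{\boldsymbol L^2(\Omega)}$, using Lemma \ref{tr0} to absorb the modes $n\le N_0$ via $\|\boldsymbol u\|_{\boldsymbol L^2(\Gamma_R)}\le\varepsilon\|\boldsymbol u\|_{\boldsymbol H^1(\Omega)}+C(\varepsilon)\|\boldsymbol u\|_{\boldsymbol L^2(\Omega)}$, and then invokes the Fredholm alternative for G\aa rding-type forms together with the compact embedding $\boldsymbol H^1\hookrightarrow\boldsymbol L^2$. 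You instead peel the low modes off as a finite-rank (hence compact) boundary operator $\mathscr T_C$, obtain genuine coercivity of the principal form $b_1$ (the tail contributes $\sum_{n\ge N}\langle\hat M_n\boldsymbol u_n^m,\boldsymbol u_n^m\rangle\ge 0$, the sign computation matching $\hat M_n=-\tfrac12(M_n+M_n^*)$), apply Lax--Milgram, and treat $\omega^2(\boldsymbol u,\boldsymbol v)+\langle\mathscr T_C\boldsymbol u,\boldsymbol v\rangle_{\Gamma_R}$ as a compact perturbation before applying the Riesz--Fredholm theory; this bypasses Lemma \ref{tr0} entirely, at the mild cost of checking boundedness of the truncated operator $\mathscr T_N$ (immediate from Lemma \ref{tbo} and the finite rank of $\mathscr T_C$). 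The two routes are equivalent in substance; yours makes the ``coercive plus compact'' structure explicit, while the paper's is slightly shorter because it never splits $\mathscr T$. Your final uniqueness step (Helmholtz decomposition of a homogeneous solution and appeal to Theorem \ref{uni}) is at the same level of detail as the paper's, which likewise transfers the homogeneous problem to \eqref{cvp} implicitly, relying on the construction of $\mathscr T$ from $\mathscr T_1$, $\mathscr T_2$.
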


\begin{proof}
Using the Cauchy--Schwarz inequality, Lemma \ref{tbo}, and Lemma \ref{tr}, we
have
\begin{align*}
|b(\boldsymbol{u}, \boldsymbol{v})| \leq &\mu
\|\nabla\boldsymbol{u}\|_{\boldsymbol
L^2(\Omega)}\|\nabla\boldsymbol{v}\|_{\boldsymbol L^2(\Omega)}
+(\lambda+\mu)\|\nabla \cdot \boldsymbol{u}\|_{0,\Omega}\|\nabla \cdot
\boldsymbol{v}\|_{\boldsymbol L^2(\Omega)} + \omega^2
\|\boldsymbol{u}\|_{\boldsymbol L^2(\Omega)}\|\boldsymbol{v}\|_{\boldsymbol
L^2(\Omega)} \\
&+\|\mathscr{T}\boldsymbol{u}\|_{\boldsymbol
H^{-1/2}(\Gamma_R)}\|\boldsymbol{v}\|_{\boldsymbol H^{1/2}(\Gamma_R) }
\notag \\
\lesssim &\|\boldsymbol{u}\|_{\boldsymbol H^1(\Omega)}
\|\boldsymbol{v}\|_{\boldsymbol H^1(\Omega)},
\end{align*}
which shows that the sesquilinear form $b(\cdot, \cdot)$ is bounded.

It follows from Lemma \ref{mpd} that there exists an $N_0 \in \mathbb{N}$ such
that $\hat{M}_n$ is positive definite for $n>N_0$. The sesquilinear form $b$
can be written as
\begin{align*}
  b(\boldsymbol{u}, \boldsymbol{v}) =& \mu  \int_\Omega (\nabla \boldsymbol{u}
:\nabla \bar{\boldsymbol{v}})\,{\rm d}\boldsymbol{x} +
(\lambda+\mu)\int_\Omega(\nabla \cdot \boldsymbol{u}) (\nabla \cdot
\bar{\boldsymbol{v}}) \, {\rm d}\boldsymbol{x} - \omega^2 \int_\Omega
\boldsymbol{u}\cdot\bar{\boldsymbol{v}}\,{\rm d}\boldsymbol{x}\\
&\qquad -\sum_{|n| > N_0}\sum_{m=-n}^{n} \left\langle M_n
\boldsymbol{u}_n^m, \boldsymbol{v}_n^m \right\rangle
 - \sum_{|n| \leq N_0}\sum_{m=-n}^{n} \left\langle M_n
\boldsymbol{u}_n^m, \boldsymbol{v}_n^m \right\rangle.
\end{align*}
Taking the real part of $b$, and using Lemma \ref{mpd}, Lemma
\ref{po}, Lemma \ref{tr0}, we obtain
\begin{align*}
  {\rm Re}\, b(\boldsymbol{u}, \boldsymbol{u}) &= \mu \|\nabla
\boldsymbol{u} \|_{\boldsymbol L^2(\Omega)}^2 + (\lambda+\mu) \|\nabla \cdot
\boldsymbol{u}\|_{\boldsymbol L^2(\Omega)}^2 + \sum_{|n| > N_0}\sum_{m=-n}^{n}
\langle \hat{M}_n \boldsymbol{u}_n^m, \boldsymbol{u}_n^m\rangle \\
&\qquad-\omega^2 \|\boldsymbol{u}\|_{\boldsymbol L^2(\Omega)} + \sum_{|n|\leq
N_0} \sum_{m=-n}^{n}\langle \hat{M}_n
\boldsymbol{u}_n^m, \boldsymbol{u}_n^m\rangle\\
&\geq C_1\|\boldsymbol{u}\|_{\boldsymbol H^1(\Omega)}-\omega^2
\|\boldsymbol{u}\|_{\boldsymbol L^2(\Omega)}
-C_2\|\boldsymbol{u}\|_{\boldsymbol L^2(\Gamma_R)}\\
&\geq C_1\|\boldsymbol{u}\|_{\boldsymbol H^1(\Omega)}-\omega^2
\|\boldsymbol{u}\|_{\boldsymbol L^2(\Omega)} -C_2\varepsilon
\|\boldsymbol{u}\|_{\boldsymbol
H^1(\Omega)}-C(\varepsilon)\|\boldsymbol{u}\|_{\boldsymbol L^2(\Omega)}\\
&=(C_1-C_2\varepsilon)\|\boldsymbol{u}\|_{\boldsymbol H^1(\Omega)}
-C_3\|\boldsymbol{u}\|_{\boldsymbol L^2(\Omega)}.
\end{align*}
Letting $\varepsilon>0$ to be sufficiently small, we have $C_1-C_2\varepsilon>0$
and thus G{\aa}rding's inequality. Since the injection of
$\boldsymbol H^1_{\partial D}(\Omega)$ into $\boldsymbol L^2(\Omega)$ is
compact, the proof is completed by using the Fredholm alternative (cf.
\cite[Theorem 5.4.5]{n-00}) and the uniqueness
result in Theorem \ref{uni}.
\end{proof}

\section{Inverse scattering}

In this section, we study a domain derivative of the scattering problem and
present a continuation method to reconstruct the surface.

\subsection{Domain derivative}

We assume that the obstacle has a $C^2$ boundary, i.e., $\partial D\in C^2$.
Given a sufficiently small number $h>0$, define a perturbed domain $\Omega_h$
which is surrounded by $\partial D_h$ and $\Gamma_R$, where
\[
\partial D_h=\{\boldsymbol{x}+h\boldsymbol{p(x)}:\boldsymbol{x}\in\partial D\}.
\]
Here the function $\boldsymbol{p}\in\boldsymbol C^2(\partial D)$.

Consider the variational formulation for the direct problem in the perturbed
domain $\Omega_h$: To find $\boldsymbol{u}_h\in\boldsymbol H^{1}_{\partial
D_h}(\Omega_h)$ such that
\begin{equation}\label{vph}
 b^h(\boldsymbol{u}_h,
\boldsymbol{v}_h)=\langle\boldsymbol{g},\boldsymbol{v}_h\rangle_{\Gamma_R},
\quad\forall\, \boldsymbol{v}_h\in\boldsymbol H^1_{\partial D_h}(\Omega_h),
\end{equation}
where the sesquilinear form $b^h: \boldsymbol H^1_{\partial
D_h}(\Omega_h)\times\boldsymbol H^1_{\partial D_h}(\Omega_h)\to\mathbb{C}$ is
defined by
\begin{align}\label{sfh}
 b^h(\boldsymbol{u}_h, \boldsymbol{v}_h)=\mu \int_{\Omega_h}
\nabla\boldsymbol{u}_h: \nabla\bar{\boldsymbol v}_h\,{\rm
d}\boldsymbol{y}+(\lambda+\mu)\int_{\Omega_h}
(\nabla\cdot\boldsymbol{u}_h)(\nabla\cdot\bar{\boldsymbol v}_h)\,{\rm
d}\boldsymbol{y}\notag\\
-\omega^2\int_{\Omega_h} \boldsymbol{u}_h\cdot\bar{\boldsymbol v}_h\,{\rm
d}\boldsymbol{y}-\langle\mathscr{T}\boldsymbol{u}_h,
\boldsymbol{v}_h\rangle{\Gamma_R}.
\end{align}
Similarly, we may follow the proof of Theorem \ref{wp} to show that the
variational problem \eqref{vph} has a unique weak solution $\boldsymbol{u}_h\in
\boldsymbol H^1_{\partial D_h}(\Omega_h)$ for any $h>0$.

Since the variational problem \eqref{wp} is well-posed, we introduce a nonlinear
scattering operator:
\[
\mathscr{S}:\partial D_h\rightarrow \boldsymbol{u}_h|_{\Gamma_R},
\]
which maps the obstacle's surface to the displacement of the wave field on
$\Gamma_R$. Let $\boldsymbol u_h$ and $\boldsymbol u$ be the solution of the
direct problem in the domain $\Omega_h$ and $\Omega$, respectively. Define the
domain derivative of the scattering operator $\mathscr{S}$ on $\partial D$ along
the direction $\boldsymbol{p}$ as
\[
\mathscr{S}'(\partial D;\boldsymbol{p}):=\lim_{h\rightarrow
0}\frac{\mathscr{S}(\partial D_h)-\mathscr{S}(\partial D)}{h}
=\lim_{h\rightarrow=0}\frac{\boldsymbol{u}_h|_{\Gamma_R}-\boldsymbol{u}|_{\Gamma
_R}}{h} .
\]

For a given $\boldsymbol{p}\in \boldsymbol C^2(\partial D)$, we extend its
domain to $\bar{\Omega}$ by requiring that $\boldsymbol{p}\in
\boldsymbol C^2(\Omega)\cap \boldsymbol C(\bar{\Omega}), \boldsymbol{p}=0$
on $\Gamma_R$,
and
$\boldsymbol{y}=\boldsymbol\xi^h(\boldsymbol{x})=\boldsymbol{x}+h\boldsymbol{p}
(\boldsymbol{x})$ maps $\Omega$ to $\Omega_h$. It is clear to note that
$\boldsymbol\xi^h$ is a diffeomorphism from $\Omega$ to $\Omega_h$ for
sufficiently small $h$. Denote by $\boldsymbol\eta^h(\boldsymbol{y}):
\Omega_h\to\Omega$ the inverse map of $\boldsymbol\xi^h$.

Define $\breve{\boldsymbol{u}}(\boldsymbol{x})=(\breve{u}_1,
\breve{u}_2, \breve{u}_{3}):=(\boldsymbol{u}
_h\circ\boldsymbol\xi^h)(\boldsymbol{x})$. Using the change of variable
$\boldsymbol{y}=\xi^h(\boldsymbol{x})$, we have from straightforward
calculations that 
\begin{align*}
 \int_{\Omega_h}(\nabla\boldsymbol{u}_h: \nabla\overline{\boldsymbol v}_h)\,{\rm
d}\boldsymbol{y}&=\sum_{j=1}^3\int_\Omega \nabla\breve{u}_j
J_{\boldsymbol\eta^h} J_{\boldsymbol\eta^h}^\top \nabla\bar{\breve{v}}_j\,{\rm
det}(J_{\boldsymbol\xi^h})\,{\rm d}\boldsymbol{x},\\
\int_{\Omega_h}(\nabla\cdot\boldsymbol{u}_h)(\nabla\cdot\bar{\boldsymbol
v}_h)\,{ \rm d}\boldsymbol{y}&=\int_\Omega (\nabla\breve{\boldsymbol u}:
J_{\boldsymbol\eta^h}^\top) (\nabla\bar{\breve{\boldsymbol v}}:
J_{\boldsymbol\eta^h}^\top)\,{\rm det}(J_{\boldsymbol\xi^h})\,{\rm
d}\boldsymbol{x},\\
\int_{\Omega_h} \boldsymbol{u}_h\cdot\bar{\boldsymbol v}_h\,{ \rm
d}\boldsymbol{y}&=\int_\Omega
\breve{\boldsymbol{u}}\cdot\bar{\breve{\boldsymbol{
v}}}\,{\rm det}(J_{\boldsymbol\xi^h})\,{\rm d}\boldsymbol{x},
\end{align*}
where $\breve{\boldsymbol{v}}(\boldsymbol{x})=(\breve{v}_1,
\breve{v}_2,
\breve{v}_3):=(\boldsymbol{v}_h\circ\boldsymbol\xi^h)(\boldsymbol{x})$,
$J_{\boldsymbol\eta^h}$ and $J_{\boldsymbol\xi^h}$ are the Jacobian
matrices of the transforms $\boldsymbol\eta^h$ and
$\boldsymbol\xi^h$, respectively.

For a test function $\boldsymbol{v}_h$ in the domain $\Omega_h$, it
follows from the transform that $\breve{\boldsymbol v}$ is a test function in
the domain $\Omega$. Therefore, the sesquilinear form $b^h$ in
\eqref{sfh} becomes
\begin{align*}
 b^h(\breve{\boldsymbol u}, \boldsymbol{v})=\sum_{j=1}^3 \mu \int_\Omega
\nabla\breve{u}_j J_{\boldsymbol\eta^h} J_{\boldsymbol\eta^h}^\top
\nabla\bar{v}_j\,{\rm det}(J_{\boldsymbol\xi^h})\,{\rm
d}\boldsymbol{x}+(\lambda+\mu)
\int_\Omega
(\nabla\breve{\boldsymbol u}: J_{\boldsymbol\eta^h}^\top) 
(\nabla\bar{\boldsymbol v}: J_{\boldsymbol\eta^h}^\top)\\
\times{\rm det}(J_{\boldsymbol\xi^h})\,{\rm d}\boldsymbol{x}
-\omega^2 \int_\Omega \breve{\boldsymbol{u}}\cdot\bar{\boldsymbol{v}}\,{\rm
det}(J_{\boldsymbol\xi^h})\,{\rm
d}\boldsymbol{x}-\langle\mathscr{T}\breve{\boldsymbol{u}},
\boldsymbol{v}\rangle_{\Gamma_R},
\end{align*}
which gives an equivalent variational formulation of \eqref{vph}:
\[
 b^h(\breve{\boldsymbol u}, \boldsymbol{v})=\langle \boldsymbol{g},
\boldsymbol{v}\rangle_{\Gamma_R},\quad\forall\, \boldsymbol{v}\in
\boldsymbol H^1_{\partial D}(\Omega).
\]

A simple calculation yields
\begin{align*}
 b(\breve{\boldsymbol u}-\boldsymbol{u}, \boldsymbol{v})=b(\breve{\boldsymbol
u}, \boldsymbol{v})-\langle\boldsymbol{g},
\boldsymbol{v}\rangle_{\Gamma_R}=b(\breve{\boldsymbol u},
\boldsymbol{v})-b^h(\breve{\boldsymbol
u}, \boldsymbol{v})=b_1 + b_2 + b_3,
\end{align*}
where
\begin{align}
\label{b1}b_1&=\sum_{j=1}^3 \mu \int_\Omega \nabla\breve{u}_j \left(I-
J_{\boldsymbol\eta^h} J_{\boldsymbol\eta^h}^\top\,{\rm
det}(J_{\boldsymbol\xi^h})\right) \nabla\bar{v}_j\,{\rm
d}\boldsymbol{x},\\
\label{b2}b_2&=(\lambda+\mu)\int_\Omega (\nabla\cdot\breve{\boldsymbol
u})(\nabla\cdot\bar{\boldsymbol v})-(\nabla\breve{\boldsymbol u}:
J_{\boldsymbol\eta^h}^\top) (\nabla\bar{\boldsymbol v}:
J_{\boldsymbol\eta^h}^\top)\,{\rm det}(J_{\boldsymbol\xi^h})\,{\rm
d}\boldsymbol{x},\\
\label{b3}b_3&=\omega^2\int_\Omega
\breve{\boldsymbol{u}}\cdot\bar{\boldsymbol{v}}\,\left({ \rm
det}(J_{\boldsymbol\xi^h})-1\right)\,{\rm d}\boldsymbol{x}.
\end{align}
Here $I$ is the identity matrix. Following the definitions of the Jacobian
matrices, we may easily verify that 
\begin{align*}
{\rm
det}(J_{\boldsymbol\xi^h})&=1+h\nabla\cdot\boldsymbol{p}+O(h^2),\\
J_{\boldsymbol\eta^h}&=J^{-1}_{\boldsymbol\xi^h}\circ\boldsymbol\eta^h=I-h
J_{\boldsymbol p}+O(h^2),\\
J_{\boldsymbol\eta^h} J^\top_{\boldsymbol\eta^h} {\rm
det}(J_{\boldsymbol\xi^h})&=I-h(J_{\boldsymbol p}+J^\top_{\boldsymbol p}
)+h(\nabla\cdot\boldsymbol{p})I+O(h^2),
\end{align*}
where the matrix $J_{\boldsymbol p}=\nabla \boldsymbol{p}$.

Substituting the above estimates into \eqref{b1}--\eqref{b3}, we obtain
\begin{align*}
 b_1&=\sum_{j=1}^3 \mu \int_\Omega \nabla\breve{u}_j \left( h(J_{\boldsymbol
p}+J^\top_{\boldsymbol p}
)-h(\nabla\cdot\boldsymbol{p})I+O(h^2)\right)
\nabla\bar{v}_j\,{\rm d}\boldsymbol{x},\\
b_2&=(\lambda+\mu)\int_\Omega h (\nabla\cdot\breve{\boldsymbol
u})(\nabla\bar{\boldsymbol v}: J^\top_{\boldsymbol
p})+h(\nabla\cdot\bar{\boldsymbol v})(\nabla\breve{\boldsymbol u}:
J^\top_{\boldsymbol p})\\
&\hspace{3cm}-h(\nabla\cdot\boldsymbol{p})(\nabla\cdot\breve{
\boldsymbol u})(\nabla\cdot\bar{\boldsymbol v})+O(h^2)\,{\rm
d}\boldsymbol{x},\\
b_3&=\omega^2\int_\Omega \breve{\boldsymbol{u}}\cdot\bar{\boldsymbol{v}}\,
\left(h\nabla\cdot\boldsymbol{p}+O(h^2)\right)\,{\rm d}\boldsymbol{x}.
\end{align*}
Hence we have
\begin{equation}\label{dvp}
b\left(\frac{\breve{\boldsymbol u}-\boldsymbol{u}}{h}, \boldsymbol{v}
\right)=g_1(\boldsymbol{p})(\breve{\boldsymbol u},
\boldsymbol{v})+g_2(\boldsymbol{p})(\breve{\boldsymbol u},
\boldsymbol{v})+g_3(\boldsymbol{p})(\breve{\boldsymbol u},
\boldsymbol{v})+O(h),
\end{equation}
where
\begin{align*}
 g_1&=\sum_{j=1}^3 \mu \int_\Omega \nabla\breve{u}_j \left( (J_{\boldsymbol
p}+J^\top_{\boldsymbol p} )-(\nabla\cdot\boldsymbol{p})I\right)
\nabla\bar{v}_j\,{\rm d}\boldsymbol{x},\\
g_2&=(\lambda+\mu)\int_\Omega(\nabla\cdot\breve{\boldsymbol
u})(\nabla\bar{\boldsymbol v}: J^\top_{\boldsymbol
p})+(\nabla\cdot\bar{\boldsymbol v})(\nabla\breve{\boldsymbol u}:
J^\top_{\boldsymbol p})-(\nabla\cdot\boldsymbol{p})(\nabla\cdot\breve{
\boldsymbol u})(\nabla\cdot\bar{\boldsymbol v})\,{\rm d}\boldsymbol{x},\\
g_3&=\omega^2\int_\Omega (\nabla\cdot\boldsymbol{p})
\breve{\boldsymbol{u}}\cdot\bar{\boldsymbol{v}}\,{\rm d}\boldsymbol{x}.
\end{align*}

\begin{theorem}\label{dd}
Given $\boldsymbol{p}\in \boldsymbol C^2(\partial D)$, the domain derivative of
the scattering operator $\mathscr{S}$ is $\mathscr{S}'(\partial D;
\boldsymbol{p})=\boldsymbol{u}'|_{\Gamma_R}$, where $\boldsymbol{u}'$ is the
unique weak solution of the boundary value problem:
\begin{align}\label{dbvp}
\begin{cases}
\mu\Delta\boldsymbol{u}'+(\lambda+\mu)\nabla\nabla\cdot\boldsymbol{u}'
+\omega^2\boldsymbol{u}'=0 &\quad\text{in} ~ \Omega,\\
\boldsymbol{u}'=-({\boldsymbol p}\cdot\boldsymbol\nu)\partial_{\boldsymbol\nu}
\boldsymbol{u} &\quad\text{on} ~ \partial D,\\
\mathscr{B}\boldsymbol{u}'=\mathscr{T}\boldsymbol{u}'
&\quad\text{on} ~\Gamma_R,
\end{cases}
\end{align}
and $\boldsymbol{u}$ is the solution of the variational problem \eqref{vp}
corresponding to the domain $\Omega$.
\end{theorem}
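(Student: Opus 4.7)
The plan is to pass to the limit $h\to 0$ in the variational identity \eqref{dvp} to identify the material derivative $\dot{\boldsymbol u} := \lim_{h\to 0}(\breve{\boldsymbol u}-\boldsymbol u)/h$, and then to recover the shape derivative via the standard relation $\boldsymbol u' = \dot{\boldsymbol u} - (\boldsymbol p\cdot\nabla)\boldsymbol u$ after a $C^2$ extension of $\boldsymbol p$ into $\Omega$ with $\boldsymbol p=0$ on $\Gamma_R$.

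First I would verify the strong convergence $\breve{\boldsymbol u}\to\boldsymbol u$ in $\boldsymbol H^1(\Omega)$ with rate $O(h)$. Writing $b(\breve{\boldsymbol u}-\boldsymbol u,\boldsymbol v)=b(\breve{\boldsymbol u},\boldsymbol v)-b^h(\breve{\boldsymbol u},\boldsymbol v)$ and invoking the Jacobian expansions already established before \eqref{b1}--\eqref{b3}, the right-hand side is bounded by $C h\|\breve{\boldsymbol u}\|_{\boldsymbol H^1(\Omega)}\|\boldsymbol v\|_{\boldsymbol H^1(\Omega)}$; the well-posedness provided by Theorem \ref{wp} then gives $\|\breve{\boldsymbol u}-\boldsymbol u\|_{\boldsymbol H^1(\Omega)}\lesssim h$, and the quotient $(\breve{\boldsymbol u}-\boldsymbol u)/h$ is uniformly bounded in $\boldsymbol H^1(\Omega)$. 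Passing to the limit in \eqref{dvp} produces $\dot{\boldsymbol u}\in\boldsymbol H^1_{\partial D}(\Omega)$ satisfying
\[
b(\dot{\boldsymbol u},\boldsymbol v)=g_1(\boldsymbol p)(\boldsymbol u,\boldsymbol v)+g_2(\boldsymbol p)(\boldsymbol u,\boldsymbol v)+g_3(\boldsymbol p)(\boldsymbol u,\boldsymbol v),\quad\forall\,\boldsymbol v\in\boldsymbol H^1_{\partial D}(\Omega).
\]
Note that $\dot{\boldsymbol u}=0$ on $\partial D$, because $\boldsymbol u_h=0$ on $\partial D_h$ pulls back under $\boldsymbol\xi^h$ to $\breve{\boldsymbol u}=0$ on $\partial D$, and $\dot{\boldsymbol u}|_{\Gamma_R}=\mathscr{S}'(\partial D;\boldsymbol p)$, because $\boldsymbol p=0$ on $\Gamma_R$ forces $\breve{\boldsymbol u}|_{\Gamma_R}=\boldsymbol u_h|_{\Gamma_R}$.

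Next I would define $\boldsymbol u':=\dot{\boldsymbol u}-(\boldsymbol p\cdot\nabla)\boldsymbol u$ in $\Omega$ and verify the claimed boundary data. On $\Gamma_R$, $\boldsymbol u'=\dot{\boldsymbol u}=\mathscr{S}'(\partial D;\boldsymbol p)$ since $\boldsymbol p$ vanishes there. On $\partial D$, the Dirichlet condition $\boldsymbol u=0$ forces all tangential derivatives of $\boldsymbol u$ to vanish, so $(\boldsymbol p\cdot\nabla)\boldsymbol u=(\boldsymbol p\cdot\boldsymbol\nu)\partial_{\boldsymbol\nu}\boldsymbol u$ there, which yields $\boldsymbol u'|_{\partial D}=-(\boldsymbol p\cdot\boldsymbol\nu)\partial_{\boldsymbol\nu}\boldsymbol u$, exactly the datum appearing in \eqref{dbvp}.

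The main technical obstacle is to verify that $\boldsymbol u'$ satisfies the homogeneous Navier equation together with the transparent boundary condition $\mathscr{B}\boldsymbol u'=\mathscr{T}\boldsymbol u'$ on $\Gamma_R$. This reduces to establishing the algebraic identity
\[
g_1(\boldsymbol p)(\boldsymbol u,\boldsymbol v)+g_2(\boldsymbol p)(\boldsymbol u,\boldsymbol v)+g_3(\boldsymbol p)(\boldsymbol u,\boldsymbol v)=b\bigl((\boldsymbol p\cdot\nabla)\boldsymbol u,\boldsymbol v\bigr)
\]
for every $\boldsymbol v\in\boldsymbol H^1_{\partial D}(\Omega)$. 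The strategy is to expand $b((\boldsymbol p\cdot\nabla)\boldsymbol u,\boldsymbol v)$ by repeated integration by parts, using the Navier equation satisfied by $\boldsymbol u$ to absorb the second-order terms and exploiting the vanishing of $\boldsymbol v$ on $\partial D$ together with $\boldsymbol p=0$ on $\Gamma_R$ to discard the stray boundary contributions; the remaining volume terms must then match those in $g_1+g_2+g_3$ term by term, which is a lengthy but routine tensor-algebra calculation. Once this identity is in hand, $b(\boldsymbol u',\boldsymbol v)=0$ for all $\boldsymbol v\in\boldsymbol H^1_{\partial D}(\Omega)$, and after lifting the nonzero Dirichlet trace $-(\boldsymbol p\cdot\boldsymbol\nu)\partial_{\boldsymbol\nu}\boldsymbol u$ on $\partial D$ to reduce to a homogeneous-Dirichlet problem, Theorem \ref{wp} identifies $\boldsymbol u'$ as the unique weak solution of \eqref{dbvp}, completing the proof.
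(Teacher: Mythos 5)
Your proposal follows essentially the same route as the paper: identify the material derivative $\dot{\boldsymbol u}=\lim_{h\to 0}(\breve{\boldsymbol u}-\boldsymbol u)/h$ from \eqref{dvp}, establish the identity $g_1+g_2+g_3=b(\boldsymbol p\cdot\nabla\boldsymbol u,\cdot)$ (the $\Gamma_R$ term dropping out since $\boldsymbol p=0$ there), set $\boldsymbol u'=\dot{\boldsymbol u}-\boldsymbol p\cdot\nabla\boldsymbol u$, and read off the Dirichlet datum from the vanishing of tangential derivatives of $\boldsymbol u$ on $\partial D$. The ``lengthy but routine'' integration-by-parts verification you defer is exactly the bulk of the paper's argument (the computations leading to \eqref{g1g3} and \eqref{g2}), and it goes through as you describe, so the plan is sound and matches the paper's proof.
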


\begin{proof}
Given $\boldsymbol{p}\in\boldsymbol C^2(\partial D)$, we extend its
definition to the domain $\bar{\Omega}$ as before. It follows from the
well-posedness of the variational
problem \eqref{vp} that $\breve{\boldsymbol u}\to\boldsymbol{u}$ in
$\boldsymbol H^1_{\partial D}(\Omega)$ as $h\to 0$. Taking the limit $h\to 0$
in \eqref{dvp} gives
\begin{equation}\label{lvp}
 b\left(\lim_{h\to 0}\frac{\breve{\boldsymbol u}-\boldsymbol{u}}{h},
\boldsymbol{v}\right)=g_1(\boldsymbol{p})(\boldsymbol{u},
\boldsymbol{v})+g_2(\boldsymbol{p})(\boldsymbol{u},
\boldsymbol{v})+g_3(\boldsymbol{p})(\boldsymbol{u},
\boldsymbol{v}),
\end{equation}
which shows that $(\breve{\boldsymbol u}-\boldsymbol{u})/h$ is convergent in
$\boldsymbol H^1_{\partial D}(\Omega)$ as $h\to 0$. Denote the limit by 
$\dot{\boldsymbol u}$ and rewrite \eqref{lvp} as
\begin{equation}\label{lbvp}
 b(\dot{\boldsymbol u}, \boldsymbol{v})=g_1(\boldsymbol{p})(\boldsymbol{u},
\boldsymbol{v})+g_2(\boldsymbol{p})(\boldsymbol{u},
\boldsymbol{v})+g_3(\boldsymbol{p})(\boldsymbol{u},
\boldsymbol{v}).
\end{equation}

First we compute $g_1(\boldsymbol{p})(\boldsymbol{u}, \boldsymbol{v})$. Noting
$\boldsymbol{p}=0$ on $\partial B$ and using the identity
\begin{align*}
 \nabla u \left( (J_{\boldsymbol p}+J^\top_{\boldsymbol
p})-(\nabla\cdot\boldsymbol{p})I \right)\nabla\bar{v}=&\nabla\cdot\left[
(\boldsymbol{p}\cdot\nabla u)\nabla\bar{v}+(\boldsymbol{p}
\cdot\nabla\bar{v})\nabla u -
(\nabla u\cdot\nabla\bar{v})\boldsymbol{p}\right]\\
&-(\boldsymbol{p}\cdot\nabla u)\Delta\bar{v}-(\boldsymbol{p}
\cdot\nabla\bar{v})\Delta u,
\end{align*}
we obtain from the divergence theorem that
\begin{align*}
 g_1(\boldsymbol{p})(\boldsymbol{u}, \boldsymbol{v})&=-\sum_{j=1}^3
    \mu\int_\Omega (\boldsymbol{p}\cdot\nabla u_j)\Delta\bar{v}_j
    +(\boldsymbol{p}\cdot\nabla\bar{v}_j)\Delta u_j\,{\rm
d}\boldsymbol{x}\\
 &\qquad -\sum_{j=1}^3 \mu\int_{\partial D}(\boldsymbol{p}\cdot\nabla u_j)   
(\boldsymbol\nu\cdot\nabla\bar{v}_j)+(\boldsymbol{p}\cdot\nabla\bar{v}_j)
    (\boldsymbol\nu\cdot\nabla u_j) - (\boldsymbol{p}\cdot\boldsymbol\nu)
    (\nabla u_j\cdot\nabla\bar{v}_j)\, {\rm d}\gamma\\
 &=-\mu\int_\Omega (\boldsymbol{p}\cdot\nabla\boldsymbol{u})\cdot\Delta
    \bar{\boldsymbol v}+(\boldsymbol{p}\cdot\nabla\bar{\boldsymbol v})
    \cdot\Delta \boldsymbol{u}\,{\rm d}\boldsymbol{x}\\
 &\qquad -\mu\int_{\partial D}(\boldsymbol{p}\cdot\nabla\boldsymbol{u})
    \cdot(\boldsymbol\nu\cdot\nabla\bar{\boldsymbol v})+(\boldsymbol{p}
    \cdot\nabla\bar{\boldsymbol
v})\cdot(\boldsymbol\nu\cdot\nabla\boldsymbol{u}) -
    (\boldsymbol{p}\cdot\boldsymbol\nu)(\nabla \boldsymbol{u}:
    \nabla\bar{\boldsymbol v})\, {\rm d}\gamma.
\end{align*}
Noting 
\[
\mu\Delta\boldsymbol{u}+(\lambda+\mu)\nabla\nabla\cdot\boldsymbol{u}
+\omega^2\boldsymbol{u}=0\quad \text{in} ~\Omega,
\]
we have from the integration by parts that 
\begin{align*}
& \mu\int_\Omega (\boldsymbol{p}\cdot\nabla\bar{\boldsymbol v}
)\cdot\Delta \boldsymbol{u}\,{\rm d}\boldsymbol{x}=-(\lambda+\mu)\int_\Omega
(\boldsymbol{p}\cdot\nabla\bar{\boldsymbol
v})\cdot(\nabla\nabla\cdot\boldsymbol{u})\,{\rm
d}\boldsymbol{x}-\omega^2\int_\Omega
(\boldsymbol{p}\cdot\nabla\bar{\boldsymbol
v})\cdot\boldsymbol{u}\,{\rm d}\boldsymbol{x}\\
&=(\lambda+\mu)\int_\Omega (\nabla\cdot\boldsymbol{u})
\nabla\cdot(\boldsymbol{p}\cdot\nabla\bar{\boldsymbol
v})\,{\rm d}\boldsymbol{x}+(\lambda+\mu)\int_{\partial
D}(\nabla\cdot\boldsymbol{u})(\boldsymbol\nu\cdot
(\boldsymbol{p}\cdot\nabla\bar{\boldsymbol
v}) )\,{\rm d}\gamma \\
&\hspace{4cm}-\omega^2\int_\Omega
(\boldsymbol{p}\cdot\nabla\bar{\boldsymbol
v})\cdot\boldsymbol{u}\,{\rm d}\boldsymbol{x}.
\end{align*}
Using the integration by parts again yields
\[
 \mu\int_\Omega (\boldsymbol{p}\cdot\nabla
\boldsymbol{u})\cdot\Delta\bar{\boldsymbol
v}\,{\rm d}\boldsymbol{x}=-\mu\int_\Omega \nabla(\boldsymbol{p}\cdot\nabla
\boldsymbol{u}): \nabla\bar{\boldsymbol v}\,{\rm
d}\boldsymbol{x}+\mu\int_{\partial D} (\boldsymbol{p}\cdot\nabla
\boldsymbol{u})\cdot (\boldsymbol\nu\cdot\nabla\bar{\boldsymbol v})\,{\rm
d}\gamma.
\]
Let $\boldsymbol\tau_1(\boldsymbol{x}), \boldsymbol\tau_2(\boldsymbol{x})$
be any two linearly independent unit tangent vectors on $\partial D$. Since
$\boldsymbol{u}=\boldsymbol{v}=0$ on
$\partial D$, we have
\[
\partial_{\boldsymbol\tau_1}u_j=\partial_{\boldsymbol\tau_2}u_j=
\partial_{\boldsymbol\tau_1}v_j=\partial_{\boldsymbol\tau_2}v_j=0.
\]
Using the identities
\begin{align*}
\nabla u_j&=\boldsymbol\tau_1\partial_{\boldsymbol\tau_1}
u_j+\boldsymbol\tau_2\partial_{\boldsymbol\tau_2} u_j
+\boldsymbol\nu\partial_{\boldsymbol\nu}
u_j=\boldsymbol\nu\partial_{\boldsymbol\nu} u_j,\\
\nabla v_j&=\boldsymbol\tau_1\partial_{\boldsymbol\tau_1}
v_j+\boldsymbol\tau_2\partial_{\boldsymbol\tau_2} v_j
+\boldsymbol\nu\partial_{\boldsymbol\nu}
v_j=\boldsymbol\nu\partial_{\boldsymbol\nu} v_j,
\end{align*}
we have
\[
(\boldsymbol{p}\cdot\nabla\bar{v}_j)(\boldsymbol\nu\cdot\nabla u_j) 
 =(\boldsymbol{p}\cdot\boldsymbol\nu\partial_{\boldsymbol\nu}\bar{v}_j
)(\boldsymbol\nu\cdot\boldsymbol\nu\partial_{\boldsymbol\nu} u_j)   
=(\boldsymbol{p}\cdot\boldsymbol\nu)(\partial_{\boldsymbol\nu}\bar{v}_j
\partial_{\boldsymbol\nu}u_j),
\]
which gives 
\[
 \int_{\partial D}(\boldsymbol{p}
\cdot\nabla\bar{\boldsymbol v
})\cdot(\boldsymbol\nu\cdot\nabla\boldsymbol{u})
-(\boldsymbol{p}\cdot\boldsymbol\nu)(\nabla \boldsymbol{u}:
\nabla\bar{\boldsymbol v})\, {\rm d}\gamma=0.
\]
Noting $\boldsymbol{v}=0$ on $\partial D$ and
\[
 (\nabla\cdot\boldsymbol{p})(\boldsymbol{u}\cdot\bar{\boldsymbol
v})+(\boldsymbol{p}\cdot\nabla\bar{\boldsymbol
v})\cdot\boldsymbol{u}=\nabla\cdot((\boldsymbol{u}\cdot\bar{\boldsymbol
v})\boldsymbol{p})-(\boldsymbol{p}\cdot\nabla\boldsymbol{u})\cdot\bar{
\boldsymbol v},
\]
we obtain by the divergence theorem that
\[
 \int_\Omega 
(\nabla\cdot\boldsymbol{p})(\boldsymbol{u}\cdot\bar{\boldsymbol
v})+(\boldsymbol{p}\cdot\nabla\bar{\boldsymbol
v})\cdot\boldsymbol{u}\,{\rm d}\boldsymbol{x}=-\int_\Omega
(\boldsymbol{p}\cdot\nabla\boldsymbol{u})\cdot\bar{
\boldsymbol v}\,{\rm d}\boldsymbol{x}.
\]
Combining the above identities, we conclude that
\begin{align}\label{g1g3}
  g_1(\boldsymbol{p})(\boldsymbol{u}, \boldsymbol{v})+
g_3(\boldsymbol{p})(\boldsymbol{u}, \boldsymbol{v})=\mu\int_\Omega
\nabla(\boldsymbol{p}\cdot\nabla \boldsymbol{u}): \nabla\bar{\boldsymbol
v}\,{\rm d}\boldsymbol{x}-(\lambda+\mu)\int_\Omega (\nabla\cdot\boldsymbol{u})
\nabla\cdot(\boldsymbol{p}\cdot\nabla\bar{\boldsymbol
v})\,{\rm d}\boldsymbol{x}\notag\\
-\omega^2\int_\Omega (\boldsymbol{p}\cdot\nabla\boldsymbol{u})\cdot\bar{
\boldsymbol v}\,{\rm d}\boldsymbol{x}+(\lambda+\mu)\int_{\partial
D}(\nabla\cdot\boldsymbol{u})(\boldsymbol\nu\cdot(\boldsymbol{
p}\cdot\nabla\bar{\boldsymbol v}))\,{ \rm d}\gamma.
\end{align}

Next we compute $g_2(\boldsymbol{p})(\boldsymbol{u}, \boldsymbol{v})$. It is
easy to verify that
\begin{align*}
\int_\Omega &(\nabla\cdot\boldsymbol{u}) (\nabla\bar{\boldsymbol v}:
J_{\boldsymbol p}^\top)+(\nabla\cdot\bar{\boldsymbol v})
(\nabla\boldsymbol{u}: J_{\boldsymbol p}^\top)\,{\rm
d}\boldsymbol{x}=\int_\Omega
(\nabla\cdot\boldsymbol{u})\nabla\cdot(\boldsymbol{p}\cdot\nabla\bar{
\boldsymbol v})\,{\rm d}\boldsymbol{x}\\
&\quad-\int_\Omega (\nabla\cdot\boldsymbol{u})(\boldsymbol{p}
\cdot(\nabla\cdot (\nabla\bar{\boldsymbol{v}})^\top) )\,{\rm
d}\boldsymbol{x}+\int_\Omega (\nabla\cdot\bar{\boldsymbol
v})\nabla\cdot(\boldsymbol{p}\cdot\nabla\boldsymbol{u})\,{\rm
d}\boldsymbol{x}\\
&\hspace{4cm}-\int_\Omega (\nabla\cdot\bar{\boldsymbol
v})(\boldsymbol{p} \cdot(\nabla\cdot (\nabla\boldsymbol{u})^\top))\,{\rm
d}\boldsymbol{x}.
\end{align*}
Using the integration by parts, we obtain
\begin{align*}
&\int_\Omega
(\nabla\cdot\boldsymbol{p})(\nabla\cdot\boldsymbol{u}
)(\nabla\cdot\bar{\boldsymbol v})\,{\rm d}\boldsymbol{x}=-\int_\Omega
\boldsymbol{p}\cdot\nabla ((\nabla\cdot\boldsymbol{u}
)(\nabla\cdot\bar{\boldsymbol v}))\,{\rm d}\boldsymbol{x}\\
&\hspace{6cm}-\int_{\partial
D}(\nabla\cdot\boldsymbol{u})(\nabla\cdot\bar{\boldsymbol
v})(\boldsymbol\nu\cdot\boldsymbol{p})\,{\rm d}\gamma\\
&=-\int_\Omega (\nabla\cdot\bar{\boldsymbol v})(\boldsymbol{p}
\cdot(\nabla\cdot (\nabla\boldsymbol{u})^\top))\,{\rm
d}\boldsymbol{x}-\int_\Omega
(\nabla\cdot\boldsymbol{u})(\boldsymbol{p}
\cdot(\nabla\cdot (\nabla\boldsymbol{v})^\top))\,{\rm
d}\boldsymbol{x}\\
&\hspace{6cm}-\int_{\partial D}(\nabla\cdot\boldsymbol{u})(\nabla\cdot\bar{
\boldsymbol v})(\boldsymbol\nu\cdot\boldsymbol{p})\,{\rm d}\gamma.
\end{align*}
Let $
\boldsymbol\tau_1=(-\nu_3,0,\nu_1)^\top,
\boldsymbol\tau_2=(0,-\nu_3,\nu_2)^\top,
\boldsymbol\tau_3=(-\nu_2,\nu_1,0)^\top.$
It follows from $\boldsymbol\tau_j\cdot\boldsymbol\nu=0$ that
$\boldsymbol\tau_j$ are tangent vectors on
$\partial D$. Since $\boldsymbol{v}=0$ on $\partial D$, we have 
$\partial_{\boldsymbol\tau_j}\boldsymbol{v}=0$, which yields that 
\begin{align*}
& \nu_1\partial_{x_3}v_1=\nu_3\partial_{x_1} v_1,\quad
 \nu_1\partial_{x_3}v_2=\nu_3\partial_{x_1} v_2, \quad
 \nu_1\partial_{x_2}v_1=\nu_2\partial_{x_1} v_1,\\
& \nu_1\partial_{x_3}v_3=\nu_3\partial_{x_1} v_3,\quad
 \nu_1\partial_{x_2}v_2=\nu_2\partial_{x_1} v_2,\quad
 \nu_1\partial_{x_2}v_3=\nu_2\partial_{x_1} v_3,\\
& \nu_2\partial_{x_3}v_1=\nu_3\partial_{x_2} v_1,\quad
 \nu_2\partial_{x_3}v_2=\nu_3\partial_{x_2} v_2,\quad
 \nu_2\partial_{x_3}v_3=\nu_3\partial_{x_2} v_3.
\end{align*}
Hence we get
\[
 \int_{\partial D}(\nabla\cdot\boldsymbol{u})(\nabla\cdot\bar{\boldsymbol
v})(\boldsymbol\nu\cdot\boldsymbol{p})\,{\rm d}\gamma=\int_{\partial
D}(\nabla\cdot\boldsymbol{u})(\boldsymbol\nu\cdot(\boldsymbol{p}
\cdot\nabla\bar{\boldsymbol v}))\,{ \rm d}\gamma.
\]
Combining the above identities gives
\begin{align}\label{g2}
 g_2(\boldsymbol{p})(\boldsymbol{u}, \boldsymbol{v})=(\lambda+\mu)&\int_\Omega
(\nabla\cdot\boldsymbol{u})\nabla\cdot(\boldsymbol{p}\cdot\nabla\bar{
\boldsymbol v})\,{\rm d}\boldsymbol{x} + (\lambda+\mu)\int_\Omega
\nabla\cdot(\boldsymbol{p}\cdot\nabla\boldsymbol{u})(\nabla\cdot\bar{
\boldsymbol v})\,{\rm d}\boldsymbol{x}\notag\\
&-(\lambda+\mu)\int_{\partial
D}(\nabla\cdot\boldsymbol{u})(\nu\cdot(\boldsymbol{
p}\cdot\nabla\bar{\boldsymbol v}))\,{ \rm d}\gamma.
\end{align}

Noting \eqref{lbvp},  adding \eqref{g1g3} and \eqref{g2}, we obtain
\[
 b(\dot{\boldsymbol u}, \boldsymbol{v})=\mu\int_\Omega
\nabla(\boldsymbol{p}\cdot\nabla \boldsymbol{u}): \nabla\bar{\boldsymbol
v}\,{\rm d}\boldsymbol{x} + (\lambda+\mu)\int_\Omega
\nabla\cdot(\boldsymbol{p}\cdot\nabla\boldsymbol{u})(\nabla\cdot\bar{
\boldsymbol v})\,{\rm d}\boldsymbol{x}-\omega^2\int_\Omega
(\boldsymbol{p}\cdot\nabla\boldsymbol{u})\cdot\bar{
\boldsymbol v}\,{\rm d}\boldsymbol{x}.
\]
Define $\boldsymbol{u}'=\dot{\boldsymbol
u}-\boldsymbol{p}\cdot\nabla\boldsymbol{u}$. It is clear to note that
$\boldsymbol{p}\cdot\nabla\boldsymbol{u}=0$ on $\Gamma_R$ since
$\boldsymbol{p}=0$ on $\Gamma_R$. Hence, we have
\begin{equation}\label{vpd}
  b({\boldsymbol u}', \boldsymbol{v})=0,\quad\forall\,\boldsymbol{v}\in
\boldsymbol H^1_{\partial D}(\Omega),
\end{equation}
which shows that $\boldsymbol{u}'$ is the weak solution of the boundary value
problem \eqref{dbvp}. To verify the boundary condition of $\boldsymbol{u}'$ on
$\partial D$, we recall the definition of $\boldsymbol{u}'$ and have from
$\breve{\boldsymbol u}=\boldsymbol{u}=0$ on $\partial D$ that 
\[
 \boldsymbol{u}'=\lim_{h\to 0}\frac{\breve{\boldsymbol
u}-\boldsymbol{u}}{h}-\boldsymbol{p}\cdot\nabla\boldsymbol{u}=-\boldsymbol{p}
\cdot\nabla\boldsymbol{u}\quad\text{on} ~ \partial D.
\]
Noting $\boldsymbol{u}=0$ on $\partial D$, we have 
\begin{equation}\label{dbc}
 {\boldsymbol p}\cdot\nabla\boldsymbol{u}=({\boldsymbol
p}\cdot\boldsymbol\nu)\partial_{\boldsymbol\nu}\boldsymbol{u},
\end{equation}
which completes the proof by combining \eqref{vpd} and \eqref{dbc}.
\end{proof}

\subsection{Reconstruction method}

Assume that the surface has a parametric equation:
\[
\partial D=\{\boldsymbol{r}(\theta,\varphi)=(r_1(\theta, \varphi),
r_2(\theta,\varphi), r_3(\theta,\varphi))^\top,~\theta\in(0, \pi),\,
 \varphi\in(0, 2\pi)\},
\]
where $r_j$ are biperiodic functions of $(\theta, \varphi)$ and have 
the Fourier series expansions:
\[
r_j(\theta,\phi)=\sum_{n=0}^{\infty}\sum_{m=-n}^{n} a_{jn}^m {\rm
Re}Y_n^m(\theta,\varphi)+ b_{jn}^m {\rm Im}Y_n^m(\theta,\varphi),
\]
where $Y_n^m$ are the spherical harmonics of order $n$. It suffices to
determine $a_{jn}^m, b_{jn}^m$ in order to reconstruct the
surface. In practice, a cut-off approximation is needed:
\[
 r_{j,N}(\theta, \varphi)=\sum_{n=0}^{N}\sum_{m=-n}^{n} a_{jn}^m {\rm
Re}Y_n^m(\theta,\varphi)+ b_{jn}^m {\rm Im}Y_n^m(\theta,\varphi).
\]

Denote by $D_N$ the approximated obstacle with boundary $\partial D_N$, which
has the parametric equation
\[
\partial D_N=\{\boldsymbol{r}_N(\theta,\varphi)=(r_{1,N}(\theta,\varphi),r_{2,N}
(\theta , \varphi),r_{3,N}(\theta,\varphi))^\top,~\theta\in(0,\pi),\,
 \phi\in(0,2\pi)\}.
\]
Let $\Omega_N=B_R\setminus\bar{D}_N$ and 
\[
 \boldsymbol a_j=(a_{j0}^0, \cdots, a_{jn}^m, \cdots, a_{jN}^N),\quad
\boldsymbol b_j=(b_{j0}^0, \cdots, b_{jn}^m, \cdots, b_{jN}^N),
\]
where $n=0, 1, \dots, N, ~ m=-n, \dots, n.$ Denote the vector of Fourier
coefficients
\[
\boldsymbol{C}=(\boldsymbol a_1, \boldsymbol b_1, \boldsymbol a_2,
\boldsymbol b_2, \boldsymbol a_3, \boldsymbol b_3)^\top=(c_1, c_2, \dots,
c_{6(N+1)^2})^\top\in \mathbb{R}^{6(N+1)^2}
\]
and a vector of scattering data
\[
\boldsymbol{U}=(\boldsymbol{u}(\boldsymbol{x}_1),\dots,\boldsymbol{u}
(\boldsymbol {x}_K))^\top\in \mathbb{C}^{3K},
\]
where $\boldsymbol{x}_k\in\Gamma_R, k=1,\dots,K$. Then the inverse problem can
be formulated to solve an approximate nonlinear equation:
\[
    \mathscr{F}(\boldsymbol{C})=\boldsymbol{U},
\]
where the operator $\mathscr{F}$ maps a vector in $\mathbb{R}^{6(N+1)^2}$ into
a vector in $\mathbb{C}^{3K}$.

\begin{theorem}\label{add}
Let $\boldsymbol{u}_N$ be the solution of the variational problem \eqref{vp}
corresponding to the obstacle $D_N$. The operator $\mathscr{F}$ is
differentiable and its derivatives are given by
\[
\frac{\partial\mathscr{F}_k(\boldsymbol C)}{\partial
c_i}=\boldsymbol{u}'_i(\boldsymbol{x}_k),\quad i=1, \dots, 6(N+1)^2, ~
k=1, \dots, K,
\]
where $\boldsymbol{u}'_i$ is the unique weak solution of the boundary value
problem
\begin{align}\label{aap}
\begin{cases}
\mu\Delta\boldsymbol{u}'_i +(\lambda+\mu)\nabla\nabla\cdot\boldsymbol{u}'_i
+\omega^2\boldsymbol{u}'_i=0 &\quad\text{in}~\Omega_N,\\
\boldsymbol{u}'_i=-q_i \partial_{\boldsymbol\nu_N} \boldsymbol {u}_N
&\quad\text{on} ~\partial D_N.\\
\mathscr{B}\boldsymbol{u}'_i=\mathscr{T}\boldsymbol{u}'_i
&\quad\text{on} ~\Gamma_R.
\end{cases}
\end{align}
Here $\boldsymbol\nu_N=(\nu_{N 1}, \nu_{N 2}, \nu_{N 3})^\top$ is the unit
normal vector on $\partial D_N$ and 
\[
q_i(\theta, \varphi)=
\begin{cases}
\nu_{N 1} {\rm Re}Y_{n}^{m}(\theta, \varphi), & i=n^2+n+m+1,\\
\nu_{N 1} {\rm Im}Y_{n}^{m}(\theta, \varphi), & i=(N+1)^2+n^2+n+m+1,\\
\nu_{N 2} {\rm Re}Y_{n}^{m}(\theta, \varphi), & i=2(N+1)^2+n^2+n+m+1,\\
\nu_{N 2} {\rm Im}Y_{n}^{m}(\theta, \varphi), & i=3(N+1)^2+n^2+n+m+1,\\
\nu_{N 3} {\rm Re}Y_{n}^{m}(\theta, \varphi), & i=4(N+1)^2+n^2+n+m+1,\\
\nu_{N 3} {\rm Im}Y_{n}^{m}(\theta, \varphi), & i=5(N+1)^2+n^2+n+m+1,
\end{cases}
\]
where $n=0, 1, \dots, N, m=-n, \dots, n$.
\end{theorem}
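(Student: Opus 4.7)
The plan is to recognize Theorem \ref{add} as a direct corollary of Theorem \ref{dd} once the correct perturbation field $\boldsymbol{p}_i$ is associated to each Fourier coefficient $c_i$. The operator $\mathscr{F}$ factors as the composition of the parametrization map $\boldsymbol{C}\mapsto\partial D_N$ and the scattering operator $\mathscr{S}:\partial D_N\mapsto \boldsymbol{u}_N|_{\Gamma_R}$, followed by pointwise evaluation at the sensors $\boldsymbol{x}_k$. Since pointwise evaluation on $\Gamma_R$ is a bounded linear functional on any reasonable trace space containing $\boldsymbol{u}_N|_{\Gamma_R}$ (here one uses interior regularity so that $\boldsymbol{u}'_i$ has a classical trace on $\Gamma_R$), differentiability of $\mathscr{F}$ reduces to differentiability of $\mathscr{S}$ in each coordinate direction of $\boldsymbol{C}$.

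First I would identify, for each index $i$, the perturbation direction on $\partial D_N$ produced by varying only $c_i$. From the parametric expansion of $\boldsymbol{r}_N$ we read off, e.g.\ for $i=n^2+n+m+1$ (which corresponds to $c_i=a_{1n}^m$), that $\partial \boldsymbol{r}_N/\partial c_i=(\mathrm{Re}\,Y_n^m,0,0)^\top$, and similarly for the other five blocks of indices. Hence the natural surface displacement associated with $c_i$ is
\[
\boldsymbol{p}_i=\partial \boldsymbol{r}_N/\partial c_i \in \boldsymbol{C}^2(\partial D_N),
\]
and the six cases reproduce exactly the coordinate pattern used to define $q_i$.

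Next I would apply Theorem \ref{dd} with $\partial D$ replaced by $\partial D_N$ and $\boldsymbol{p}$ replaced by $\boldsymbol{p}_i$, after first extending $\boldsymbol{p}_i$ smoothly into $\bar{\Omega}_N$ so that it vanishes on $\Gamma_R$; this extension is immaterial for the statement since only the trace $\boldsymbol{p}_i\cdot\boldsymbol{\nu}_N$ on $\partial D_N$ enters the boundary datum. Theorem \ref{dd} then yields a well-defined domain derivative $\mathscr{S}'(\partial D_N;\boldsymbol{p}_i)=\boldsymbol{u}'_i|_{\Gamma_R}$, where $\boldsymbol{u}'_i$ solves the boundary value problem \eqref{dbvp} with Dirichlet datum $-(\boldsymbol{p}_i\cdot\boldsymbol{\nu}_N)\partial_{\boldsymbol{\nu}_N}\boldsymbol{u}_N$ on $\partial D_N$. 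A direct computation of $\boldsymbol{p}_i\cdot\boldsymbol{\nu}_N$ in each of the six cases gives precisely the function $q_i$ defined in the statement, so \eqref{dbvp} specializes to \eqref{aap}. Combining this with the sensor-evaluation step produces the claimed formula
\[
\partial_{c_i}\mathscr{F}_k(\boldsymbol{C})=\boldsymbol{u}'_i(\boldsymbol{x}_k).
\]

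The main obstacle is the transition from the one-parameter domain variation used in Theorem \ref{dd} (where $\partial D_h=\{\boldsymbol{x}+h\boldsymbol{p}(\boldsymbol{x})\}$) to the genuine partial derivative $\partial/\partial c_i$ of a finite-dimensional parametrization. One must argue that perturbing $c_i\mapsto c_i+h$ while freezing the remaining coefficients is equivalent, modulo terms of order $h^2$, to the normal perturbation $\boldsymbol{x}\mapsto\boldsymbol{x}+h\boldsymbol{p}_i(\boldsymbol{x})$ used in Theorem \ref{dd}; this follows because both families of surfaces agree to first order in $h$ in $\boldsymbol{C}^2(\partial D_N)$, and the map $\partial D\mapsto \boldsymbol{u}|_{\Gamma_R}$ was shown in Theorem \ref{dd} to be Fréchet (and not merely Gâteaux) differentiable, so the value of the derivative depends only on the first-order jet of the perturbation. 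Unique solvability of \eqref{aap} is inherited from Theorem \ref{wp} applied to the shifted problem $\boldsymbol{u}'_i+\boldsymbol{w}$, where $\boldsymbol{w}$ is any $\boldsymbol{H}^1(\Omega_N)$-lift of the Dirichlet datum, and the uniqueness argument of Theorem \ref{uni}.
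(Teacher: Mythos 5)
Your proposal is correct and follows essentially the same route as the paper: the paper's own proof simply fixes $i$ and $k$, writes $\partial_{c_i}\mathscr{F}_k(\boldsymbol C)$ as the difference quotient along the basis vector $\boldsymbol e_i$, and invokes Theorem \ref{dd}, which is precisely the reduction you carry out (your write-up in fact supplies the details the paper leaves implicit, namely the identification of $\boldsymbol p_i=\partial\boldsymbol r_N/\partial c_i$ and the computation $q_i=\boldsymbol p_i\cdot\boldsymbol\nu_N$). One small remark: your ``main obstacle'' paragraph is unnecessary, because the parametrization is linear in the coefficients, so perturbing $c_i\mapsto c_i+h$ produces exactly the family $\partial D_h=\{\boldsymbol x+h\boldsymbol p_i(\boldsymbol x)\}$ treated in Theorem \ref{dd} (not merely to first order), and no Fr\'echet-versus-G\^ateaux argument is needed.
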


\begin{proof}
 Fix $i\in\{1, \dots, 6(N+1)^2\}$ and $k\in\{1, \dots, K\}$, and let $\{
\boldsymbol e_1, \dots, \boldsymbol e_{6(N+1)^2}\}$ be the set of natural basis
vectors in $\mathbb{R}^{6(N+1)^2}$. By definition, we have
\[
\frac{\partial\mathscr{F}_k(\boldsymbol C)}{\partial c_i}=\lim_{h\to
0}\frac{\mathscr{F}_k(\boldsymbol C+h\boldsymbol e_i)-\mathscr{F}_k(
\boldsymbol C)}{h}.
\]
A direct application of Theorem \ref{dd} shows that the above limit exists and
the limit is the unique weak solution of the boundary value problem \eqref{aap}.
\end{proof}

Consider the objective function
\[
f({\boldsymbol C})=\frac{1}{2}\|\mathscr{F}(\boldsymbol C)-\boldsymbol
U\|^2=\frac{1}{2}\sum_{k=1}^K
|\mathscr{F}_k(\boldsymbol C)-\boldsymbol{u}(\boldsymbol{x}_k)|^2.
\]
The inverse problem can be formulated as the minimization problem:
\[
 \min_{\boldsymbol C}f(\boldsymbol C),\quad
{\boldsymbol C}\in\mathbb{R}^{6(N+1)^2}.
\]
In order to apply the descend method, we have to compute the gradient of the
objective function:
\[
    \nabla f(\boldsymbol C)=\left(\frac{\partial f(\boldsymbol C)}{\partial
c_1}, \dots,    \frac{f(\boldsymbol C)}{\partial c_{6(N+1)^2}}\right)^\top.
\]
We have from Theorem \ref{add} that 
\[
 \frac{\partial f(\boldsymbol C)}{\partial c_i}={ \rm Re}\sum_{k=1}^K
\boldsymbol{u}'_i(\boldsymbol{x}_k)\cdot (\bar{\mathscr{F}}_k(\boldsymbol
C)-\bar{\boldsymbol{u}}(\boldsymbol{x}_k)).
\]

We assume that the scattering data ${\boldsymbol U}$ is available over
a range of frequencies $\omega\in [\omega_{\rm min},\,\omega_{\rm max}]$, which
may be divided into $\omega_{\rm
min}=\omega_0<\omega_1<\cdots<\omega_J=\omega_{\rm max}$. We now propose an
algorithm to reconstruct the Fourier
 coefficients $c_i, i=1,
\dots, 6(N+1)^2$.

\vspace{1ex}
\hrule \hrule 
\vspace{0.8ex} \noindent {\bf Algorithm: Frequency continuation algorithm for
surface reconstruction.}
\vspace{0.8ex} 
\hrule
\vspace{0.8ex} 

\begin{enumerate}

\item  {\bf Initialization}: take an initial guess
$c_{2}=-c_{4}=1.44472 R_0$ and
$c_{3(N+1)^2+2}=c_{3(N+1)^2+4}=1.44472 R_0$,
    $c_{4(N+1)^2+3}=2.0467 R_0$ and $c_i=0$ otherwise. The initial guess is a
ball with radius $R_0$ under the spherical harmonic functions;

 \item {\bf First approximation}: begin with $\omega_0$, let
$k_0=[\omega_0]$, seek an approximation to the functions $r_{j, N}$:
    \[
    r_{j,k_0}=\sum_{n=0}^{k_0}\sum_{m=-n}^{n} a_{jn}^m {\rm Re}
Y_n^m(\theta,\phi)+ b_{jn}^m {\rm Im}Y_n^m(\theta,\phi).
    \]
    Denote $\boldsymbol C^{(1)}_{k_0}=(c_1, c_2, \dots,
c_{6(k_0+1)^2})^\top$ and consider the iteration:
    \begin{equation}\label{descent}
    {\bf C}_{k_0}^{(l+1)}={\bf C}_{k_0}^{(l)}-\tau \nabla f({\bf
    C}_{k_0}^{(l)}),\quad l=1, \dots, L,
    \end{equation}
    where $\tau>0$ and $L>0$ are the step size and the number of iterations
	 for every fixed frequency, respectively.

    \item {\bf Continuation}: increase to $\omega_1$, let $k_1=[\omega_1]$, 
repeat Step 2 with the previous approximation to $r_{j, N}$ as the starting
point. More precisely, approximate $r_{j, N}$ by
    \[
    r_{j, k_1}=\sum_{n=0}^{k_1}\sum_{m=-n}^{n} a_{jn}^m {\rm Re}
Y_n^m(\theta,\phi)+b_{jn}^m {\rm Im} Y_n^m(\theta,\phi),
    \]
    and determine the coefficients $\tilde{c}_i, i=1, \dots, 6(k_1+1)^2$ by
using the descent method starting from the previous result. 

    \item {\bf Iteration}: repeat Step 3 until a prescribed highest frequency
$\omega_J$ is reached.

\end{enumerate}

\vspace{0.8ex}
\hrule\hrule
\vspace{0.8ex} 

\section{Numerical experiments}

In this section, we present two examples to show the effectiveness of the
proposed method. The scattering data is obtained from solving the direct
problem by using the finite element method with the perfectly matched layer
technique, which is implemented via FreeFem++ \cite{h-nm12}. The finite element
solution is interpolated uniformly on $\Gamma_R$. To test the stability, we
add noise to the data:
\[
 \boldsymbol{u}^\delta
(\boldsymbol{x}_k)=\boldsymbol{u}(\boldsymbol{x}_k)(1+\delta\,{\rm rand}),
\quad k=1,\dots, K,
\]
where rand are uniformly distributed random numbers in $[-1,\,1]$ and $\delta$
is the relative noise level, $\boldsymbol{x}_k$ are data points. In our 
experiments, we pick 100 uniformly distributed points $\boldsymbol{x}_k$ on 
$\Gamma_R$, i.e., $K=100$.

In the following two examples, we take $\lambda=2,
\mu=1$, $R=1$. The radius of the initial $R_0=0.5$. The noise level
$\delta=5\%$. The step size in \eqref{descent} is
$\tau=0.005/k_i$ where $k_i=[\omega_i]$. The incident field is taken as a plane
compressional wave.

{\bf Example 1}. Consider a bean-shaped obstacle:
\[
 \boldsymbol{r}(\theta, \varphi)=(r_1(\theta, \varphi), r_2(\theta, \varphi),
r_3(\theta, \varphi))^\top,~\theta\in[0, \pi],\,\varphi\in[0, 2\pi],
\]
where
\begin{align*}
r_1(\theta,
\varphi)&=0.75\left((1-0.05\cos(\pi\cos\theta))\sin\theta\cos\varphi\right)^{1/2}
,\\
r_2(\theta, \varphi)&=0.75\left(
(1-0.005\cos(\pi\cos\theta))\sin\theta\sin\varphi+0.35\cos(\pi\cos\theta)\right)^
{1/2},\\
r_3(\theta, \varphi) &= 0.75\cos\theta.
\end{align*}
The exact surface is plotted in Figure \ref{ex1}(a). This obstacle is
non-convex and is usually difficult to reconstruct the concave part of the
obstacle. The obstacle is illuminated by the compressional wave sent from a
single direction $\boldsymbol d=(0,1,0)^\top$; the frequency
ranges from $\omega_{\rm min}=1$ to $\omega_{\rm max}=5$ with increment 1 at
each continuation step, i.e., $\omega_i=i+1, i=0, \dots, 4$; for any fixed
frequency, repeat $L=100$ times with previous result as starting points. The
step size for the decent method is $0.005/\omega_i$. The number of recovered
coefficients is $6(\omega_i+2)^2$ for corresponding frequency. Figure
\ref{ex1}(b) shows the initial guess which is the ball with radius $R_0=0.5$;
Figure \ref{ex1}(c) shows the final reconstructed surface; Figures
\ref{ex1}(d)--(f) show the cross section of the exact surface along the plane
$x_1=0, x_2=0, x_3=0$, respectively; Figures \ref{ex1}(g)--(i) show the
corresponding cross section for the reconstructed surface along the plane
$x_1=0, x_2=0, x_3=0$, respectively. As is seen, the algorithm effectively
reconstructs the bean-shaped obstacle.

\begin{figure}
\centering
\includegraphics[width=0.325\textwidth]{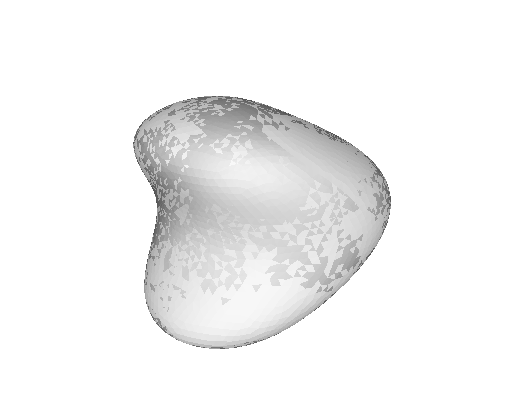}
\includegraphics[width=0.325\textwidth]{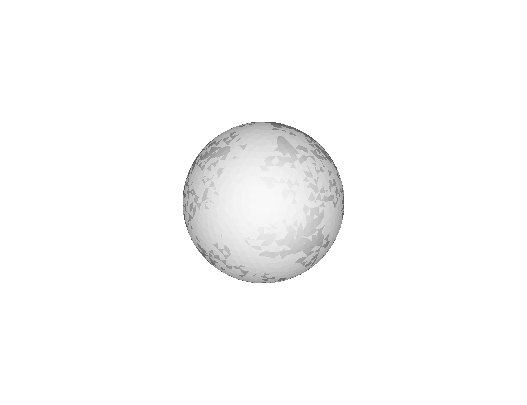}
\includegraphics[width=0.325\textwidth]{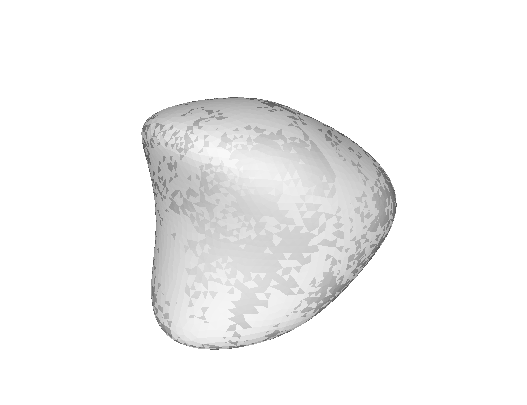}\\
\hspace{-0.5cm} (a) \hspace{4cm} (b) \hspace{3.5cm} (c) \\
\includegraphics[width=0.325\textwidth]{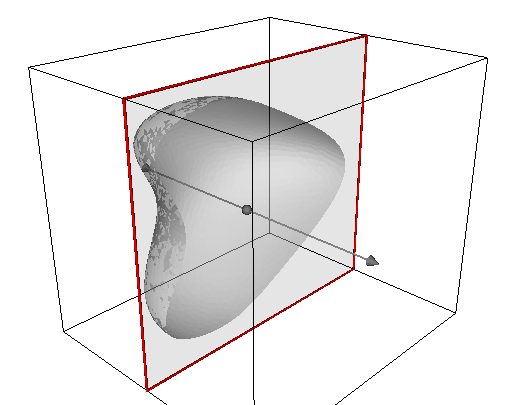}
\includegraphics[width=0.325\textwidth]{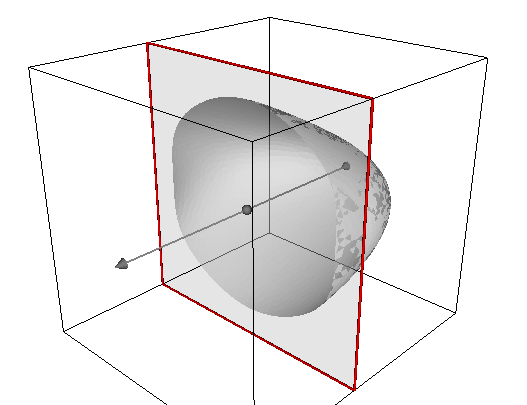}
\includegraphics[width=0.325\textwidth]{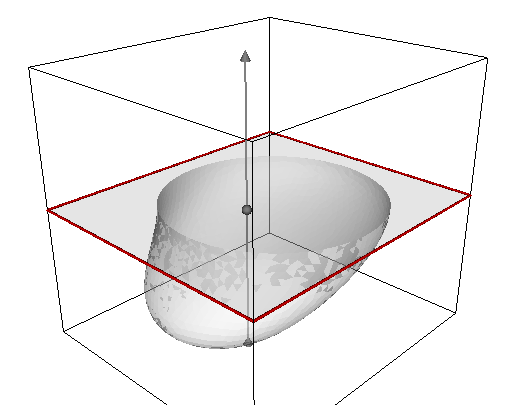}\\
\hspace{-0.5cm} (d) \hspace{4cm} (e) \hspace{3.5cm} (f) \\
\includegraphics[width=0.325\textwidth]{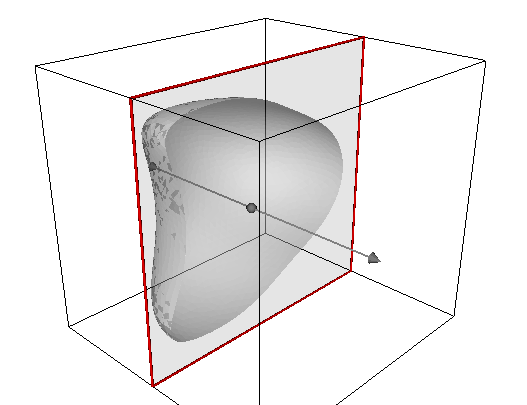}
\includegraphics[width=0.325\textwidth]{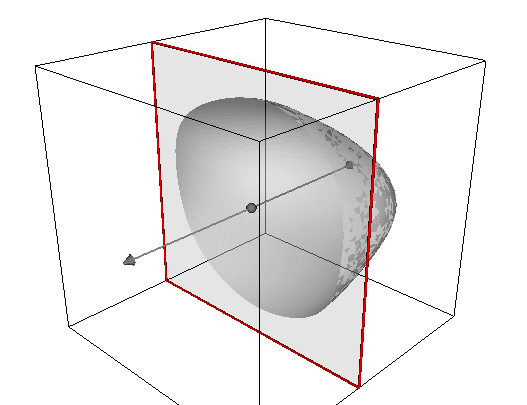}
\includegraphics[width=0.325\textwidth]{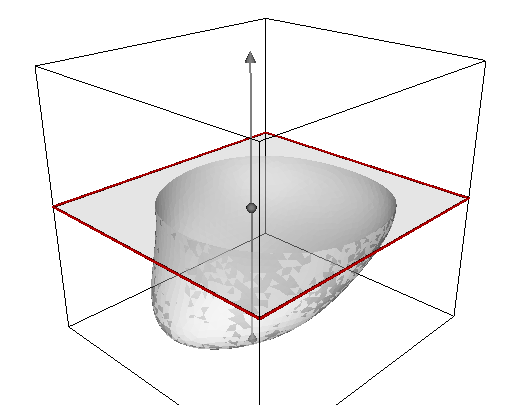}\\
\hspace{-0.5cm} (g) \hspace{4cm} (h) \hspace{3.5cm} (i) \\
\caption{Example 1: A bean-shaped obstacle. (a) the exact surface; (b)
the initial guess; (c) the reconstructed surface; 
(d)--(f) the corresponding cross section of the exact surface along plane
$x_1=0, x_2=0, x_3=0$, respectively; (g)--(i) the corresponding cross 
section of the reconstructed surface along plane $x_1=0, x_2=0, x_3=0$, 
respectively.}
\label{ex1}
\end{figure}

{\bf Example 2}. Consider a cushion-shaped obstacle:
\[
 \boldsymbol{r}(\theta, \varphi)=r(\theta, \varphi)(\sin(\theta)\cos(\varphi),
\sin(\theta)\sin(\varphi), \cos(\theta))^\top,~\theta\in[0,
\pi],\,\varphi\in[0, 2\pi],
\]
where
\[
r(\theta,\varphi)=\left(0.75+0.45(\cos(2\varphi)-1)(\cos(4\theta)-1)\right)^{1/2}.
\]
Figure \ref{ex2}(a) shows the exact surface. This example is much more complex
than the bean-shaped obstacle due to its multiple concave parts. Multiple
incident directions are needed in order to obtain a good result. In this
example, the obstacle is illuminated by the compressional wave from 6
directions, which are the unit vectors pointing to the origin from the
face centers of the cube. The multiple frequencies are the same as the first
example, i.e., the frequency ranges from $\omega_{\rm min}=1$ to
$\omega_{\rm max}=5$ with $\omega_i=i+1, i=0, \dots, 4$. For each fixed
frequency and incident direction, repeat $L=50$ times with previous result as
starting points. The step size for the decent method is $0.005/\omega_i$ and
number of recovered coefficients is $6(\omega_i+2)^2$ for corresponding
frequency. Figure \ref{ex2}(b) shows the initial guess ball with radius
$R_0=0.5$; Figure \ref{ex2}(c) shows the final reconstructed surface; Figure
\ref{ex2}(d)--(f) show the cross section of the exact surface along the plane
$x_1=0, x_2=0, x_3=0$, respectively; while Figure \ref{ex2}(g)--(i) show the
corresponding cross section for the reconstructed surface along the plane
$x_1=0, x_2=0, x_3=0$, respectively. It is clear to note that the algorithm can
also reconstruct effectively the more complex cushion-shaped obstacle.

\begin{figure}
\centering
\includegraphics[width=0.32\textwidth]{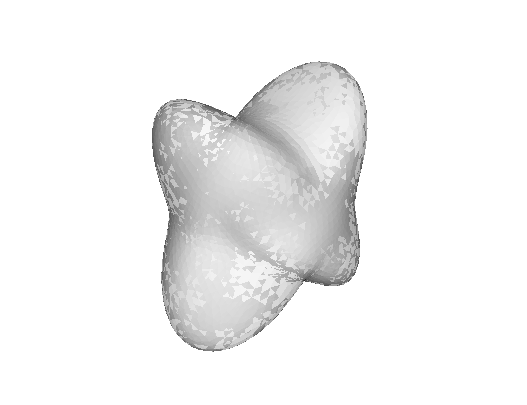}
\includegraphics[width=0.32\textwidth]{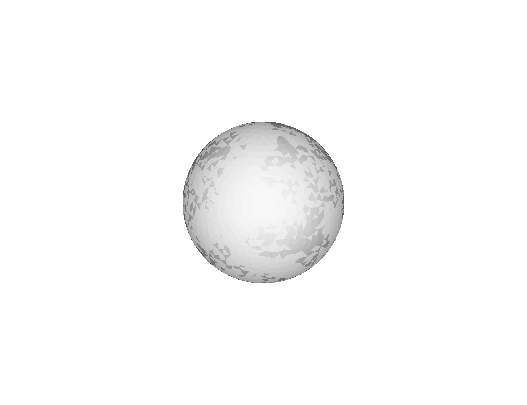}
\includegraphics[width=0.32\textwidth]{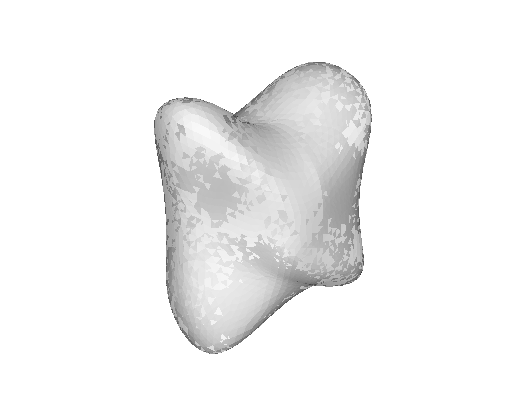}\\
\hspace{-0.5cm} (a) \hspace{4cm} (b) \hspace{3.5cm} (c) \\
\includegraphics[width=0.32\textwidth]{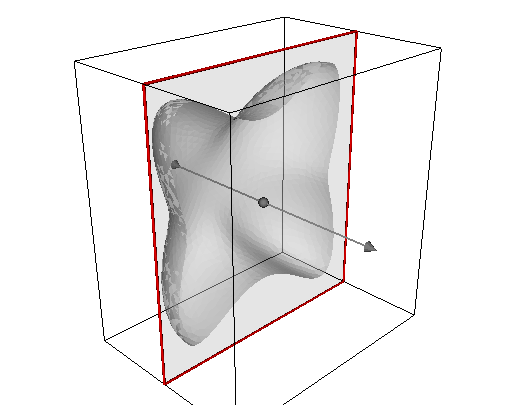}
\includegraphics[width=0.32\textwidth]{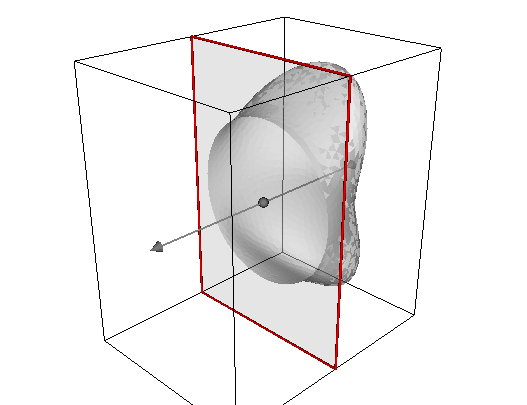}
\includegraphics[width=0.32\textwidth]{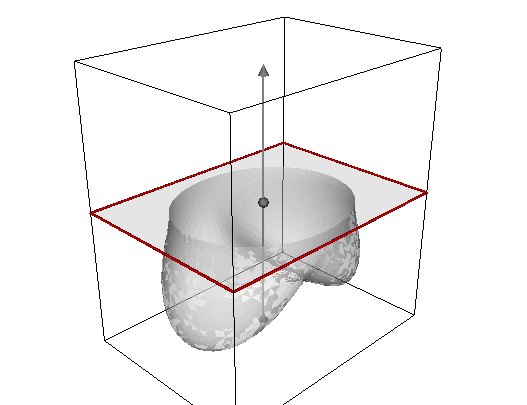}\\
\hspace{-0.5cm} (d) \hspace{4cm} (e) \hspace{3.5cm} (f) \\
\includegraphics[width=0.32\textwidth]{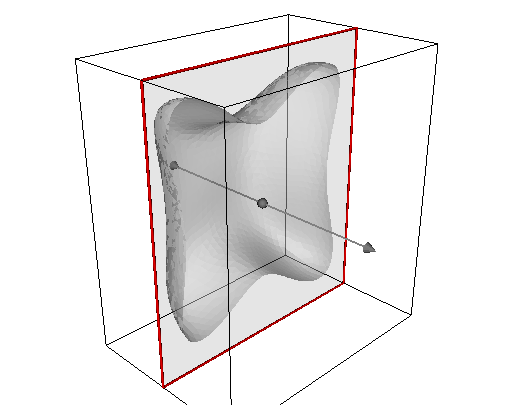}
\includegraphics[width=0.32\textwidth]{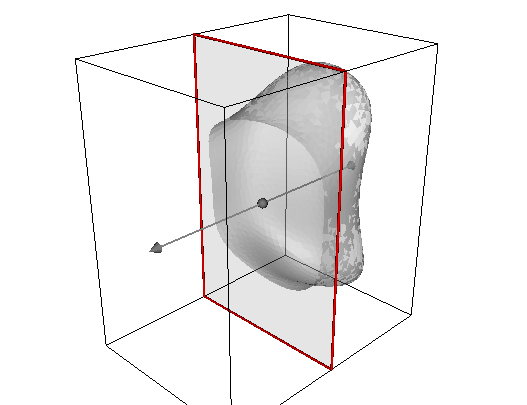}
\includegraphics[width=0.32\textwidth]{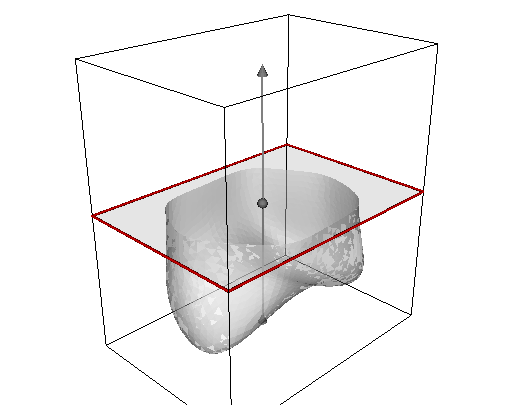}\\
\hspace{-0.5cm} (g) \hspace{4cm} (h) \hspace{3.5cm} (i) \\
\caption{Example 2: A cushion-shaped obstacle. (a) the exact surface; (b)
the initial guess; (c) the reconstructed surface; (d)--(f) the corresponding
cross section of the exact surface along the plane $x_1=0, x_2=0, x_3=0$,
respectively; (d)--(f) the corresponding cross section of the reconstructed
surface along the plane $x_1=0, x_2=0, x_3=0$, respectively.}
\label{ex2}
\end{figure}

\section{Conclusion}

In this paper, we have studied the direct and inverse obstacle scattering
problems for elastic waves in three dimensions. We develop an exact transparent
boundary condition and show that the direct problem has a unique weak solution.
We examine the domain derivative of the total displacement with respect to the
surface of the obstacle. We propose a frequency continuation method for solving
the inverse scattering problem. Numerical examples are presented to
demonstrate the effectiveness of the proposed method. The results show that
the method is stable and accurate to reconstruct surfaces with noise. Future
work includes the surfaces of different boundary conditions and multiple
obstacles where each obstacle's surface has a parametric equation. We hope to be
able to address these issues and report the progress elsewhere in the future.

\appendix

\section{Spherical harmonics and functional spaces}

The spherical coordinates $(r, \theta, \varphi)$ are related to the Cartesian
coordinates $\boldsymbol x=(x_1, x_2, x_3)$ by $x_1=r\sin\theta\cos\varphi,
x_2=r\sin\theta\sin\varphi, x_3=r\cos\theta$. The local orthonormal basis is
\begin{align*}
\boldsymbol e_r &=(\sin\theta\cos\varphi, \sin\theta\sin\varphi,
\cos\theta)^\top, \\
\boldsymbol e_\theta &=(\cos\theta\cos\varphi,
\cos\theta\sin\varphi,-\sin\theta)^\top, \\
\boldsymbol e_\varphi &=(-\sin\varphi, \cos\varphi, 0)^\top.
\end{align*}

Let $\{Y_n^m(\theta, \varphi): n=0, 1, 2, \dots, m=-n, \dots, n\}$ be the
orthonormal sequence of spherical harmonics of order $n$ on the unit sphere.
Define rescaled spherical harmonics
\[
 X_n^m(\theta, \varphi)=\frac{1}{R}Y_n^m(\theta, \varphi). 
\]
It can be shown that $\{X_n^m(\theta, \varphi): n=0, 1, \dots, m=-n,\dots, n\}$
form a complete orthonormal system in $L^2(\Gamma_R)$. 

For a smooth scalar function $u(R, \theta, \varphi)$ defined on $\Gamma_R$, let 
\[
\nabla_{\Gamma_R}u=\partial_\theta u\,\boldsymbol e_\theta
+(\sin\theta)^{-1}\partial_\varphi u\,\boldsymbol e_\varphi
\]
be the tangential gradient on $\Gamma_R$. Define a sequence of vector spherical
harmonics:
\begin{align*}
  \boldsymbol T_n^m(\theta, \varphi)&=\frac{1}{\sqrt{n(n+1)}}\nabla_{\Gamma_R}
X_n^m(\theta, \varphi),\\
\boldsymbol V_n^m(\theta, \varphi)&=\boldsymbol T_n^m(\theta,
\varphi)\times \boldsymbol e_r,\quad
\boldsymbol W_n^m(\theta, \varphi)=X_n^m(\theta, \varphi)\boldsymbol e_r,
\end{align*}
where $n=0, 1, \dots, m=-n, \dots, n$. Using the orthogonality of the vector
spherical harmonics, we can also show that $\{(\boldsymbol T_n^m, \boldsymbol
V_n^m,
\boldsymbol W_n^m): n=0, 1, 2, \dots, m=-n, \dots, n\}$ form a complete
orthonormal system in $\boldsymbol L^2(\Gamma_R)=L^2(\Gamma_R)^3$.

Let $\boldsymbol L^2(\Omega)=L^2(\Omega)^3$ be equipped with the inner product
and norm:
\[
 (\boldsymbol{u}, \boldsymbol{v})=\int_\Omega
    \boldsymbol{u}\cdot\bar{\boldsymbol{v}}\,{\rm d}\boldsymbol{x},\quad
 \|\boldsymbol{u}\|_{\boldsymbol L^2(\Omega)}=(\boldsymbol{u},
\boldsymbol{u})^{1/2}.
\]
Denote by $H^1(\Omega)$ the standard Sobolev space with the
norm given by
\[
 \|u\|_{H^1(\Omega)}=\left(\int_\Omega
|u(\boldsymbol{x})|^2+|\nabla u(\boldsymbol x)|^2\,{\rm
d}\boldsymbol{x}\right)^{1/2}. 
\]
Let $\boldsymbol H^1_{\partial
D}(\Omega)=H^1_{\partial D}(\Omega)^3$, where $H^1_{\partial D}(\Omega):=\{u\in
H^1(\Omega): u=0~\text{on}~\partial D\}$. Introduce the Sobolev space 
\[
 \boldsymbol H({\rm curl}, \Omega)=\{\boldsymbol u\in \boldsymbol L^2(\Omega),
\nabla\times\boldsymbol u\in \boldsymbol L^2(\Omega)\},
\]
which is equipped with the norm
\[
 \|\boldsymbol u\|_{\boldsymbol H({\rm curl},
\Omega)}=\left(\|\boldsymbol u\|^2_{\boldsymbol L^2(\Omega)}
+\|\nabla\times\boldsymbol u\|^2_{\boldsymbol L^2(\Omega)}\right)^{1/2}. 
\]

Denote by $H^s(\Gamma_R)$ the trace functional space which is equipped
with the norm 
\[
 \|u\|_{H^s(\Gamma_R)}=\left(\sum_{n=0}^\infty\sum_{m=-n}^n (1+n(n+1))^s
|u_n^m|^2\right)^{1/2},
\]
where 
\[
u(R, \theta, \varphi)=\sum_{n=0}^\infty\sum_{m=-n}^n u_n^m X_n^m(\theta,
\varphi). 
\]
Let $\boldsymbol H^s(\Gamma_R)=H^s(\Gamma_R)^3$ which is equipped with the
normal
\[
 \|\boldsymbol u\|_{\boldsymbol
H^s(\Gamma_R)}=\left(\sum_{n=0}^\infty\sum_{m=-n}^n (1+n(n+1))^s
|\boldsymbol u_n^m|^2\right)^{1/2},
\]
where $\boldsymbol u_n^m=(u_{1n}^m, u_{2n}^m, u_{3n}^m)^\top$ and 
\[
 \boldsymbol u(R, \theta, \varphi)=\sum_{n=0}^\infty\sum_{m=-n}^n
u_{1n}^m\boldsymbol T_n^m(\theta, \varphi)+u_{2n}^m\boldsymbol V_n^m(\theta,
\varphi)+u_{3n}^m\boldsymbol W_n^m(\theta, \varphi). 
\]
It can be verified that $\boldsymbol H^{-s}(\Gamma_R)$ is the dual space of
$\boldsymbol H^s(\Gamma_R)$ with respect to the inner product
\[
 \langle\boldsymbol{u}, \boldsymbol{v}\rangle_{\Gamma_R}=\int_{\Gamma_R}
\boldsymbol{u}\cdot\bar{\boldsymbol{v}}\,{\rm
d}\gamma=\sum_{n=0}^\infty\sum_{m=-n}^n u_{1n}^m\bar{v}_{1n}^m
+u_{2n}^m\bar{v}_{2n}^m+u_{3n}^m \bar{v}_{3n}^m,
\]
where
\[
 \boldsymbol v(R, \theta, \varphi)=\sum_{n=0}^\infty\sum_{m=-n}^n
v_{1n}^m\boldsymbol T_n^m(\theta, \varphi)+v_{2n}^m\boldsymbol V_n^m(\theta,
\varphi)+v_{3n}^m\boldsymbol W_n^m(\theta, \varphi).
\]

Introduce three tangential trace spaces:
\begin{align*}
 \boldsymbol H_{\rm t}^s(\Gamma_R)&=\{\boldsymbol u\in 
\boldsymbol H^s(\Gamma_R), ~\boldsymbol u\cdot\boldsymbol e_r=0\},\\
\boldsymbol H^{-1/2}({\rm curl}, \Gamma_R)&=\{\boldsymbol u\in\boldsymbol
H^{-1/2}_{\rm t}(\Gamma_R), ~ {\rm curl}_{\Gamma_R}\boldsymbol u\in
H^{-1/2}(\Gamma_R)\},\\
\boldsymbol H^{-1/2}({\rm div}, \Gamma_R)&=\{\boldsymbol u\in \boldsymbol
H^{-1/2}_{\rm t}(\Gamma_R), ~ {\rm div}_{\Gamma_R}\boldsymbol u\in
H^{-1/2}(\Gamma_R)\}.
\end{align*}
For any tangential field $\boldsymbol u\in \boldsymbol H^s_{\rm t}(\Gamma_R)$,
it can be represented in the series expansion
\[
 \boldsymbol u(R, \theta, \varphi)=\sum_{n=0}^\infty\sum_{m=-n}^n u_{1n}^m
\boldsymbol T_n^m(\theta, \varphi)+u_{2n}^m \boldsymbol V_n^m(\theta, \varphi). 
\]
Using the series coefficients, the norm of the space $\boldsymbol H^s_{\rm
t}(\Gamma_R)$ can be characterized by 
\[
 \|\boldsymbol u\|^2_{\boldsymbol H^s_{\rm
t}(\Gamma_R)}=\sum_{n=0}^\infty\sum_{m=-n}^n (1+n(n+1))^s
\left(|u_{1n}^m|^2+|u_{2n}^m|^2 \right);
\]
the norm of the space $\boldsymbol H^{-1/2}({\rm curl}, \Gamma_R)$ can be
characterized by 
\[
 \|\boldsymbol u\|^2_{\boldsymbol H^{-1/2}({\rm curl},
\Gamma_R)}=\sum_{n=0}^\infty\sum_{m=-n}^n
\frac{1}{\sqrt{1+n(n+1)}}|u_{1n}^m|^2+\sqrt{1+n(n+1)}|u_{2n}^m|^2;
\]
the norm of the space $\boldsymbol H^{-1/2}({\rm div}, \Gamma_R)$ can be
characterized by 
\[
 \|\boldsymbol u\|^2_{\boldsymbol H^{-1/2}({\rm div},
\Gamma_R)}=\sum_{n=0}^\infty\sum_{m=-n}^n
\sqrt{1+n(n+1)}|u_{1n}^m|^2+\frac{1}{\sqrt{1+n(n+1)}}|u_{2n}^m|^2. 
\]

Given a vector field $\boldsymbol u$ on $\Gamma_R$, denote by
$ \boldsymbol u_{\Gamma_R}=-\boldsymbol e_r\times(\boldsymbol
e_r\times\boldsymbol u) $ the tangential component of $\boldsymbol u$ on
$\Gamma_R$. Define the inner product in $\mathbb{C}^3$:
$\langle\boldsymbol{u}, \boldsymbol{v}\rangle=\boldsymbol{v}^*\boldsymbol{u},
\forall\, \boldsymbol{u}, \boldsymbol{v}\in\mathbb{C}^3, $
where $\boldsymbol{v}^*$ is the conjugate transpose of $\boldsymbol{v}$.

\section{Transparent boundary conditions}

Recall the Helmholtz decomposition \eqref{hdv}:
\[
 \boldsymbol v=\nabla\phi+\nabla\times\boldsymbol\psi, \quad
\nabla\cdot\boldsymbol\psi=0,
\]
where the scalar potential function $\phi$ satisfies \eqref{he} and
\eqref{ksrc}:
\begin{equation}\label{Aphi}
\begin{cases}
 \Delta\phi+\kappa^2_{\rm p}\phi=0&\quad\text{in}~ \mathbb
R^3\setminus\bar{D},\\
\partial_r\phi-{\rm i}\kappa_{\rm
p}\phi=o(r^{-1})&\quad\text{as}~r\to\infty,
\end{cases}
\end{equation}
the vector potential function $\boldsymbol\psi$ satisfies \eqref{ms} and
\eqref{smrc}:
\begin{equation}\label{Apsi}
 \begin{cases}
  \nabla\times(\nabla\times \boldsymbol\psi)-\kappa^2_{\rm
s}\boldsymbol\psi=0&\quad\text{in}~ \mathbb R^3\setminus\bar{D},\\
(\nabla\times\boldsymbol\psi)\times\hat{\boldsymbol x}-{\rm i}\kappa_{\rm
s}\boldsymbol\psi=o(r^{-1})&\quad\text{as} ~ r\to\infty,
 \end{cases}
\end{equation}
where $r=|\boldsymbol x|$ and $\hat{\boldsymbol x}=\boldsymbol x/r$.

In the exterior domain $\mathbb R^3\setminus \bar{B}_R$, the solution
$\phi$ of \eqref{Aphi} satisfies
\begin{equation}\label{sphi}
 \phi(r, \theta, \varphi)=\sum_{n=0}^\infty\sum_{m=-n}^n
\frac{h^{(1)}_n(\kappa_{\rm p}r)}{h^{(1)}_n(\kappa_{\rm p}R)}\phi_n^m
X_n^m(\theta, \varphi),
\end{equation}
where $h^{(1)}_n$ is the spherical Hankel function of the first kind with order
$n$ and 
\[
 \phi_n^m=\int_{\Gamma_R} \phi(R, \theta, \varphi)\bar{X}_n^m(\theta,
\varphi){\rm d}\gamma.
\]
We define the boundary operator $\mathscr T_1$ such that 
\begin{equation}\label{bphi}
 (\mathscr{T}_1\phi)(R, \theta,
\varphi)=\frac{1}{R}\sum_{n=0}^\infty\sum_{m=-n}^n z_n(\kappa_{\rm p}R)\phi_n^m
X_n^m(\theta, \varphi),
\end{equation}
where $ z_n(t)=th_n^{(1)'}(t)/h_n^{(1)}(t)$ satisfies (cf. \cite[Theorem
2.6.1]{n-00})
\begin{equation}\label{zn}
-(n+1) \leq {\rm Re}z_n(t)\leq -1,\quad 0<{\rm Im}z_n(t)\leq t.
\end{equation}
Evaluating the derivative of \eqref{sphi} with respect to $r$ at $r=R$ and
using \eqref{bphi}, we get the transparent boundary condition for the scalar
potential function $\phi$:
\begin{equation}\label{tphi}
 \partial_r\phi=\mathscr{T}_1\phi\quad\text{on}~ \Gamma_R. 
\end{equation}

The following result can be easily shown from \eqref{bphi}--\eqref{zn}. 
\begin{lemma}\label{bt1}
 The operator $\mathscr T_1$ is bounded from $H^{1/2}(\Gamma_R)$ to
$H^{-1/2}(\Gamma_R)$. Moreover, it satisfies
\[
 {\rm Re}\langle\mathscr T_1 u, u\rangle_{\Gamma_R}\leq 0,\quad {\rm
Im}\langle\mathscr T_1 u, u\rangle_{\Gamma_R}\geq 0,\quad\forall u\in
H^{1/2}(\Gamma_R). 
\]
If ${\rm Re}\langle\mathscr T_1 u, u\rangle_{\Gamma_R}=0$ or ${\rm
Im}\langle\mathscr T_1 u, u\rangle_{\Gamma_R}=0$, then $u=0$ on $\Gamma_R$. 
\end{lemma}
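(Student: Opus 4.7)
The strategy is to work entirely in the spherical-harmonic basis, since the operator $\mathscr{T}_1$ acts diagonally on the coefficients $u_n^m$. First, expand $u\in H^{1/2}(\Gamma_R)$ as $u(R,\theta,\varphi)=\sum_{n,m}u_n^m X_n^m(\theta,\varphi)$, so that by \eqref{bphi},
\[
(\mathscr{T}_1 u)(R,\theta,\varphi)=\frac{1}{R}\sum_{n,m} z_n(\kappa_{\rm p}R)\,u_n^m\,X_n^m(\theta,\varphi).
\]
All three claims then reduce to estimates on the scalar multipliers $z_n(\kappa_{\rm p}R)$.

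For boundedness, I will use the characterization of the $H^{-1/2}(\Gamma_R)$ norm in terms of Fourier coefficients together with the asymptotic behavior $z_n(t)=-(n+1)+O(1/n)$ already recorded in the proof of Lemma~\ref{mpd}; in particular $|z_n(\kappa_{\rm p}R)|\lesssim 1+n$ uniformly in $n$. Then
\[
\|\mathscr{T}_1 u\|_{H^{-1/2}(\Gamma_R)}^2
=\sum_{n,m}(1+n(n+1))^{-1/2}\left|\tfrac{z_n(\kappa_{\rm p}R)}{R}\right|^2|u_n^m|^2
\lesssim\sum_{n,m}(1+n(n+1))^{1/2}|u_n^m|^2=\|u\|_{H^{1/2}(\Gamma_R)}^2,
\]
using $(1+n)^2(1+n(n+1))^{-1/2}\lesssim (1+n(n+1))^{1/2}$. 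This handles the mapping property.

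For the sign conditions, apply Parseval in the duality pairing to obtain
\[
\langle\mathscr{T}_1 u,u\rangle_{\Gamma_R}=\frac{1}{R}\sum_{n,m} z_n(\kappa_{\rm p}R)\,|u_n^m|^2.
\]
Taking real and imaginary parts and invoking the bounds \eqref{zn}, namely ${\rm Re}\,z_n\le -1<0$ and ${\rm Im}\,z_n>0$, gives ${\rm Re}\langle\mathscr{T}_1u,u\rangle_{\Gamma_R}\le 0$ and ${\rm Im}\langle\mathscr{T}_1u,u\rangle_{\Gamma_R}\ge 0$ immediately. The vanishing part is then a diagonal argument: if either the real or imaginary part equals zero, then every term in the corresponding series is zero, and since each ${\rm Re}\,z_n(\kappa_{\rm p}R)\le -1$ (respectively ${\rm Im}\,z_n(\kappa_{\rm p}R)>0$) is strictly nonzero, one concludes $u_n^m=0$ for all $n,m$, hence $u=0$ on $\Gamma_R$.

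The main (and really only) obstacle is to make sure the bounds on $z_n$ quoted from \cite{n-00} in \eqref{zn} give strict inequalities of the right kind: ${\rm Re}\,z_n\le-1$ is strict in the sense of being bounded away from zero, and ${\rm Im}\,z_n>0$ is strict. Both are already built into \eqref{zn}, so no further analysis of spherical Hankel functions is needed; the proof is essentially bookkeeping once these facts are in hand.
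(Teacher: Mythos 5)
Your argument is correct and is exactly the route the paper intends: the paper states that Lemma \ref{bt1} ``can be easily shown from \eqref{bphi}--\eqref{zn},'' and your proof simply writes out that diagonal multiplier argument, using $|z_n(\kappa_{\rm p}R)|\lesssim 1+n$ for boundedness and the strict bounds ${\rm Re}\,z_n\le -1$, ${\rm Im}\,z_n>0$ for the sign and vanishing statements. No gaps; this matches the paper's (omitted) proof in substance.
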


Define an auxiliary function $\boldsymbol\varphi=({\rm i}\kappa_{\rm
s})^{-1}\nabla\times\boldsymbol\psi$. We have from \eqref{Apsi} that 
\begin{equation}\label{me}
  \nabla\times\boldsymbol\psi-{\rm i}\kappa_{\rm
s}\boldsymbol\varphi=0, \quad
\nabla\times\boldsymbol\varphi+{\rm i}\kappa_{\rm
s}\boldsymbol\psi=0,
\end{equation}
which are Maxwell's equations. Hence $\boldsymbol\phi$ and $\boldsymbol\psi$
plays the role of the electric field and the magnetic field, respectively.

Introduce the vector wave functions
\begin{equation}\label{vwf}
 \begin{cases}
  \boldsymbol M_n^m(r, \theta, \varphi)=\nabla\times(\boldsymbol x
h_n^{(1)}(\kappa_{\rm s}r)X_n^m(\theta, \varphi)),\\
\boldsymbol N_n^m(r, \theta, \varphi)=({\rm i}\kappa_{\rm
s})^{-1}\nabla\times\boldsymbol M_n^m(r, \theta, \varphi),
 \end{cases}
\end{equation}
which are the radiation solutions of \eqref{me} in $\mathbb
R^3\setminus\{0\}$ (cf. \cite[Theorem 9.16]{m-03}):
\[
 \nabla\times\boldsymbol M_n^m(r, \theta, \varphi)-{\rm i}\kappa_{\rm
s}\boldsymbol N_n^m(r, \theta, \varphi)=0,\quad \nabla\times\boldsymbol
N_n^m(r, \theta, \varphi)+{\rm i}\kappa_{\rm s}\boldsymbol M_n^m(r, \theta,
\varphi)=0.
\]
Moreover, it can be verified from \eqref{vwf} that they satisfy 
\begin{equation}\label{sm}
 \boldsymbol M_n^m=h_n^{{(1)}}(\kappa_{\rm
s}r)\nabla_{\Gamma_R}X_n^m\times\boldsymbol e_r
\end{equation}
and
\begin{equation}\label{sn}
 \boldsymbol N_n^m=\frac{\sqrt{n(n+1)}}{{\rm i}\kappa_{\rm
s}r}(h_n^{(1)}(\kappa_{\rm s}r)+\kappa_{\rm s}r h_n^{(1)'}(\kappa_{\rm
s}r))\boldsymbol T_n^m+\frac{n(n+1)}{{\rm i}\kappa_{\rm
s}r}h_n^{(1)}(\kappa_{\rm s}r)\boldsymbol W_n^m. 
\end{equation}
In the domain $\mathbb R^3\setminus\bar{B}_R$, the solution of
$\boldsymbol\psi$ in \eqref{me} can be written in the series
\begin{equation}\label{spsi}
  \boldsymbol\psi=\sum_{n=0}^\infty\sum_{m=-n}^n \alpha_n^m\boldsymbol
N_n^m+\beta_n^m\boldsymbol M_n^m,
\end{equation}
which is uniformly convergent on any compact subsets in $\mathbb
R^3\setminus\bar{B}_R$. Correspondingly, the solution of $\boldsymbol\varphi$
in \eqref{me} is given by 
\begin{equation}\label{svpi}
 \boldsymbol\varphi=({\rm
i}\kappa_{\rm s})^{-1}\nabla\times\boldsymbol\psi=\sum_{n=0}^\infty\sum_{m=-n}^n
\beta_n^m\boldsymbol N_n^m-\alpha_n^m\boldsymbol M_n^m. 
\end{equation}

It follows from \eqref{sm}--\eqref{sn} that 
\begin{align*}
 -\boldsymbol e_r\times(\boldsymbol e_r\times \boldsymbol
M_n^m)&=-\sqrt{n(n+1)}h_n^{(1)}(\kappa_{\rm s}r)\boldsymbol V_n^m,\\
-\boldsymbol e_r\times(\boldsymbol e_r\times \boldsymbol
N_n^m)&=\frac{\sqrt{n(n+1)}}{{\rm i}\kappa_{\rm
s}r}(h_n^{(1)}(\kappa_{\rm s}r)+\kappa_{\rm s}r h_n^{(1)'}(\kappa_{\rm
s}r))\boldsymbol T_n^m
\end{align*}
and
\begin{align*}
 \boldsymbol e_r\times \boldsymbol M_n^m&=\sqrt{n(n+1)}h_n^{(1)}(\kappa_{\rm
s}r)\boldsymbol T_n^m,\\
\boldsymbol e_r\times \boldsymbol N_n^m&=\frac{\sqrt{n(n+1)}}{{\rm
i}\kappa_{\rm s}r}(h_n^{(1)}(\kappa_{\rm s}r)+\kappa_{\rm s}r
h_n^{(1)'}(\kappa_{\rm s}r))\boldsymbol V_n^m.
\end{align*}
Therefore, by \eqref{spsi}, the tangential component of $\boldsymbol\psi$ on
$\Gamma_R$ is 
\[
 \boldsymbol\psi_{\Gamma_R}=\sum_{n=0}^\infty\sum_{m=-n}^n
\frac{\sqrt{n(n+1)}}{{\rm i}\kappa_{\rm s}R}(h_n^{(1)}(\kappa_{\rm
s}R)+\kappa_{\rm s}R h_n^{(1)'}(\kappa_{\rm s}R))\alpha_n^m \boldsymbol
T_n^m +\sqrt{n(n+1)}h_n^{(1)}(\kappa_{\rm s}R)\beta_n^m \boldsymbol V_n^m.
\]
Similarly, by \eqref{svpi}, the tangential trace of $\boldsymbol\varphi$ on
$\Gamma_R$ is
\begin{align*}
 \boldsymbol\varphi\times\boldsymbol e_r &=\sum_{n=0}^\infty\sum_{m=-n}^n
\sqrt{n(n+1)}h_n^{(1)}(\kappa_{\rm s}R)\alpha_n^m \boldsymbol T_n^m\\
&\qquad-\frac{\sqrt{n(n+1)}}{{\rm i}\kappa_{\rm s}R}(h_n^{(1)}(\kappa_{\rm
s}R)+\kappa_{\rm s}R h_n^{(1)'}(\kappa_{\rm s}R))\beta_n^m \boldsymbol
V_n^m.
\end{align*}

Given any tangential component of the electric field on
$\Gamma_R$ with the expression
\[
 \boldsymbol u=\sum_{n=0}^\infty\sum_{m=-n}^n u_{1n}^m \boldsymbol T_n^m
+u_{2n}^m\boldsymbol V_n^m,
\]
we define 
\begin{equation}\label{bpsi}
 \mathscr{T}_2\boldsymbol u=\sum_{n=0}^\infty\sum_{m=-n}^n
\frac{{\rm i}\kappa_{\rm s}R}{1+z_n(\kappa_{\rm s}R)}u_{1n}^m \boldsymbol T_n^m
+\frac{1+z_n(\kappa_{\rm s}R)}{{\rm i}\kappa_{\rm s}R} u_{2n}^m \boldsymbol
V_n^m.
\end{equation}
Using \eqref{bpsi}, we obtain the transparent boundary condition
for $\boldsymbol\psi$:
\begin{equation}\label{tpsi}
 (\nabla\times\boldsymbol\psi)\times\boldsymbol e_r={\rm i}\kappa_{\rm
s}\mathscr{T}_2\boldsymbol\psi_{\Gamma_R}\quad\text{on}~ \Gamma_R. 
\end{equation}

The following result can also be easily shown from \eqref{zn} and \eqref{bpsi}
\begin{lemma}\label{bt2}
 The operator $\mathscr T_2$ is bounded from $\boldsymbol H^{1/2}({\rm curl},
\Gamma_R)$ to $\boldsymbol H^{-1/2}({\rm div}, \Gamma_R)$. Moreover, it
satisfies
\[
 {\rm Re}\langle\mathscr T_2 \boldsymbol u, \boldsymbol u\rangle_{\Gamma_R}\geq
0,\quad\forall \boldsymbol u\in \boldsymbol H^{1/2}({\rm curl}, \Gamma_R). 
\]
If ${\rm Re}\langle\mathscr T_2 \boldsymbol u, \boldsymbol
u\rangle_{\Gamma_R}=0$, then $\boldsymbol u=0$ on $\Gamma_R$. 
\end{lemma}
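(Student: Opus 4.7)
The plan is to Fourier-decompose $\boldsymbol{u}$ with respect to the orthonormal system $\{\boldsymbol{T}_n^m,\boldsymbol{V}_n^m\}$ on $\Gamma_R$ and reduce all three claims to properties of the scalar multipliers $\frac{{\rm i}\kappa_{\rm s}R}{1+z_n(\kappa_{\rm s}R)}$ and $\frac{1+z_n(\kappa_{\rm s}R)}{{\rm i}\kappa_{\rm s}R}$ appearing in the definition \eqref{bpsi}. Write $1+z_n(\kappa_{\rm s}R) = \alpha_n+{\rm i}\beta_n$. By the bounds \eqref{zn}, $\alpha_n\in[-n,0]$ and $\beta_n\in(0,\kappa_{\rm s}R]$; combined with the standard large-$n$ asymptotics of $h_n^{(1)}$ already used in Lemma \ref{mpd}, this yields $|1+z_n(\kappa_{\rm s}R)|\asymp n+1$ uniformly in $n$ (with $\beta_n>0$ keeping $|1+z_n|$ bounded away from zero for small $n$).

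For boundedness, orthonormality and the multiplier formula give Fourier coefficients of $\mathscr{T}_2\boldsymbol{u}$ whose squared moduli are of order $(\kappa_{\rm s}R)^2|u_{1n}^m|^2/|1+z_n|^2$ along $\boldsymbol{T}_n^m$ and $|1+z_n|^2|u_{2n}^m|^2/(\kappa_{\rm s}R)^2$ along $\boldsymbol{V}_n^m$. Inserting $|1+z_n|\asymp n+1$ and combining with the weights $\sqrt{1+n(n+1)}$ and $(1+n(n+1))^{-1/2}$ that characterize $\boldsymbol{H}^{-1/2}({\rm div},\Gamma_R)$, we pick up weights $\lesssim (n+1)^{-1}$ on $|u_{1n}^m|^2$ and $\lesssim (n+1)$ on $|u_{2n}^m|^2$, which are dominated by the weights defining $\|\boldsymbol{u}\|^2_{\boldsymbol{H}^{1/2}({\rm curl},\Gamma_R)}$. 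This gives the desired continuity estimate.

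For the sign inequality, orthonormality yields
\[
\langle\mathscr{T}_2\boldsymbol{u},\boldsymbol{u}\rangle_{\Gamma_R}
=\sum_{n,m}\frac{{\rm i}\kappa_{\rm s}R}{1+z_n(\kappa_{\rm s}R)}|u_{1n}^m|^2
+\frac{1+z_n(\kappa_{\rm s}R)}{{\rm i}\kappa_{\rm s}R}|u_{2n}^m|^2,
\]
and a direct computation shows ${\rm Re}\,\frac{{\rm i}\kappa_{\rm s}R}{1+z_n} = \kappa_{\rm s}R\,\beta_n/(\alpha_n^2+\beta_n^2)$ and ${\rm Re}\,\frac{1+z_n}{{\rm i}\kappa_{\rm s}R} = \beta_n/(\kappa_{\rm s}R)$, both strictly positive since $\beta_n>0$. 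Taking real parts establishes ${\rm Re}\langle\mathscr{T}_2\boldsymbol{u},\boldsymbol{u}\rangle_{\Gamma_R}\ge 0$. If this sum of nonnegative terms vanishes, then since the scalar factors are strictly positive we must have $u_{1n}^m=u_{2n}^m=0$ for all $(n,m)$; as $\boldsymbol{u}$ is tangential and has no $\boldsymbol{W}_n^m$ component, this forces $\boldsymbol{u}=0$ on $\Gamma_R$. The only mildly delicate point is establishing the uniform equivalence $|1+z_n(\kappa_{\rm s}R)|\asymp n+1$ over all $n$, rather than merely asymptotically; the positivity and rigidity statements come essentially for free once the real parts are written down.
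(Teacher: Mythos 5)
Your proposal is correct and follows exactly the route the paper intends (the paper merely asserts the lemma is "easily shown from \eqref{zn} and \eqref{bpsi}"): diagonalize $\mathscr{T}_2$ on the $\{\boldsymbol T_n^m,\boldsymbol V_n^m\}$ basis, read off the sign of the real parts of the multipliers from $0<{\rm Im}\,z_n\le \kappa_{\rm s}R$, and get boundedness from $|1+z_n(\kappa_{\rm s}R)|\asymp n+1$, which you correctly justify by combining the large-$n$ expansion of $z_n$ used in Lemma \ref{mpd} with strict positivity of ${\rm Im}\,z_n$ for the finitely many remaining $n$. No gaps; your write-up in fact supplies the details the paper omits.
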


\section{Fourier coefficients}

We derive the mutual representations of the Fourier coefficients between
$\boldsymbol v$ and $(\phi, \boldsymbol\psi)$. First we have from \eqref{sphi}
that
\begin{equation}\label{D-phi}
  \phi(r, \theta, \varphi)=\sum_{n=0}^\infty\sum_{m=-n}^n
\frac{h^{(1)}_n(\kappa_{\rm p}r)}{h^{(1)}_n(\kappa_{\rm p}R)}\phi_n^m
X_n^m(\theta, \varphi).
\end{equation}
Substituting \eqref{sm}--\eqref{sn} into \eqref{spsi} yields 
\begin{align}\label{D-psi1}
 \boldsymbol\psi(r, \theta,
\varphi)&=\sum_{n=0}^\infty\sum_{m=-n}^n\frac{\sqrt{n(n+1)}}{{\rm i}\kappa_{\rm
s}r}(h_n^{(1)}(\kappa_{\rm s}r)+\kappa_{\rm s}r h_n^{(1)'}(\kappa_{\rm
s}r))\alpha_n^m \boldsymbol T_n^m\notag\\
&\qquad+\sqrt{n(n+1)}h_n^{(1)}(\kappa_{\rm s}r)\beta_n^m \boldsymbol
V_n^m+\frac{n(n+1)}{{\rm i}\kappa_{\rm s}r}h_n^{(1)}(\kappa_{\rm
s}r)\alpha_n^m \boldsymbol W_n^m. 
\end{align}
Given $\boldsymbol\psi$ on $\Gamma_R$, it has the Fourier expansion:
\begin{equation}\label{D-psi2}
 \boldsymbol\psi(R, \theta, \varphi)=\sum_{n=0}^\infty\sum_{m=-n}^n \psi_{1n}^m
\boldsymbol T_n^m(\theta, \varphi)+\psi_{2n}^m\boldsymbol V_n^m(\theta,
\varphi)+\psi_{3n}^m\boldsymbol W_n^m(\theta, \varphi).
\end{equation}
Evaluating \eqref{D-psi1} at $r=R$ and then comparing it with
\eqref{D-psi2}, we get
\begin{equation}\label{ab}
 \alpha_n^m=\frac{{\rm i}\kappa_{\rm s}R
}{n(n+1)h_n^{(1)}(\kappa_{\rm s}R)}\psi_{3n}^m,\quad
\beta_n^m=\frac{1}{\sqrt{n(n+1)}h_n^{(1)}(\kappa_{\rm s}R)}\psi_{2n}^m.
\end{equation}
Plugging \eqref{ab} back into \eqref{D-psi1} gives 
\begin{align}\label{D-psi}
  \boldsymbol\psi(r, \theta, \varphi)&=\sum_{n=0}^\infty\sum_{m=-n}^n
\left(\frac{R}{r}\right)\left(\frac{h_n^{(1)}(\kappa_{\rm s}r)+\kappa_{\rm s}r
h_n^{(1)'}(\kappa_{\rm s}r)}{\sqrt{n(n+1)}h_n^{(1)}(\kappa_{\rm
s}R)}\right)\psi_{3n}^m\boldsymbol T_n^m\notag\\
&\qquad+\left(\frac{h_n^{(1)}(\kappa_{\rm s}r)}{h_n^{(1)}(\kappa_{\rm
s}R)}\right)\psi_{2n}^m\boldsymbol
V_n^m+\left(\frac{R}{r}\right)\left(\frac{h_n^ {(1)} 
(\kappa_{\rm  s}r)}{h_n^{(1)}(\kappa_{\rm s} R)}\right)\psi_{3n}^m\boldsymbol
W_n^m.     
\end{align}

Noting $\nabla\phi=\partial_r\phi\, \boldsymbol e_r
+\frac{1}{r}\nabla_{\Gamma_R}\phi$, we have from \eqref{D-phi} and \eqref{D-psi}
that
\begin{align*}
 \nabla\phi &=\sum_{n=0}^\infty\sum_{m=-n}^n \left(\frac{\kappa_{\rm
p}h^{(1)'}_n(\kappa_{\rm p}r)}{h^{(1)}_n(\kappa_{\rm p}R)}\right)\phi_n^m X_n^m
\boldsymbol e_r +\left(\frac{h^{(1)}_n(\kappa_{\rm
p}r)}{r h^{(1)}_n(\kappa_{\rm p}R)}\right)\phi_n^m \nabla_{\Gamma_R}X_n^m\\
&=\sum_{n=0}^\infty\sum_{m=-n}^n \left(\frac{\kappa_{\rm
p}h^{(1)'}_n(\kappa_{\rm p}r)}{h^{(1)}_n(\kappa_{\rm
p}R)}\right)\phi_n^m\boldsymbol W_n^m+\left(\frac{\sqrt{n(n+1)} h^{(1)}
_n(\kappa_{\rm p}r)}{r h^{(1)}_n(\kappa_{\rm p}R)}\right)\phi_n^m\boldsymbol
T_n^m. 
\end{align*}
and
\[
 \nabla\times\boldsymbol\psi=\sum_{n=0}^\infty\sum_{m=-n}^n \boldsymbol I_{1n}^m
+ \boldsymbol I_{2n}^m + \boldsymbol I_{3n}^m,
\]
where
\begin{align*}
 \boldsymbol I_{1n}^m&=\nabla\times\left[
\left(\frac{R}{r}\right)\left(\frac{h_n^{(1)}(\kappa_{\rm s}r)+\kappa_{\rm s}r
h_n^{(1)'}(\kappa_{\rm s}r)}{\sqrt{n(n+1)}h_n^{(1)}(\kappa_{\rm
s}R)}\right)\psi_{3n}^m\boldsymbol T_n^m\right]\\
&=\frac{R h_n^{(1)}(\kappa_{\rm s}r)}{\sqrt{n(n+1)}h_n^{(1)}(\kappa_{\rm
s}R)}\left(\kappa^2_{\rm s}-\frac{n(n+1)}{r^2}\right)\psi_{3n}^m\boldsymbol
V_n^m,\\
\boldsymbol I_{2n}^m &=\nabla\times\left[\left(\frac{h_n^{(1)}(\kappa_{\rm
s}r)}{h_n^{(1)}(\kappa_{\rm s}R)}\right)\psi_{2n}^m\boldsymbol V_n^m\right]\\
&=\left(\frac{h_n^{(1)}(\kappa_{\rm s}r)+\kappa_{\rm s}r
h_n^{(1)'}(\kappa_{\rm s}r)}{r h_n^{(1)}(\kappa_{\rm s}R)}\right)
\psi_{2n}^m\boldsymbol T_n^m +\frac{\sqrt{n(n+1)}h_n^{(1)}(\kappa_{\rm s}r)}{r
h_n^{(1)}(\kappa_{\rm s}R)}\psi_{2n}^m \boldsymbol W_n^m,\\
\boldsymbol I_{3n}^m
&=\nabla\times\left[\left(\frac{R}{r}\right)\left(\frac{h_n^{(1)} 
(\kappa_{\rm  s}r)}{h_n^{(1)}(\kappa_{\rm s} R)}\right)\psi_{3n}^m\boldsymbol
W_n^m\right]
=\frac{R\sqrt{n(n+1)} h_n^{(1)}(\kappa_{\rm s}r)}{r^2 h_n^{(1)}(\kappa_{\rm
s}R)}\psi_{3n}^m\boldsymbol V_n^m. 
\end{align*}
Combining the above equations and noting $\boldsymbol
v=\nabla\phi+\nabla\times\boldsymbol\psi$, we obtain 
\begin{align}\label{D-v}
& \boldsymbol v(r, \theta, \varphi)=\sum_{n=0}^\infty\sum_{m=-n}^n
\left(\frac{\sqrt{n(n+1)} h^{(1)}_n(\kappa_{\rm
p}r)}{r h^{(1)}_n(\kappa_{\rm p}R)}\phi_n^m+\frac{(h_n^{(1)}(\kappa_{\rm
s}r)+\kappa_{\rm s}r h_n^{(1)'}(\kappa_{\rm s}r))}{r
h_n^{(1)}(\kappa_{\rm s}R)}\psi_{2n}^m\right) \boldsymbol T_n^m\notag\\
&\qquad+\frac{\kappa^2_{\rm s}R h_n^{(1)}(\kappa_{\rm
s}r) }{\sqrt{n(n+1)}h_n^{(1)}(\kappa_{\rm
s}R)}\psi_{3n}^m\boldsymbol V_n^m +\left(\frac{\kappa_{\rm
p}h^{(1)'}_n(\kappa_{\rm p}r)}{h^{(1)}_n(\kappa_{\rm
p}R)}\phi_n^m+\frac{\sqrt{n(n+1)}h_n^{(1)}(\kappa_{\rm s}r)}{r
h_n^{(1)}(\kappa_{\rm s}R)}\psi_{2n}^m\right) \boldsymbol W_n^m,
\end{align}
which gives
\begin{align}\label{D-v1}
  \boldsymbol v(R, \theta,
\varphi)=&\sum_{n=0}^\infty\sum_{m=-n}^n\frac{1}{R}\left(\sqrt{n(n+1)}\phi_n^m
+(1+z_n(\kappa_{\rm s}R))\psi_{2n}^m\right)\boldsymbol T_n^m\notag\\
&+\frac{\kappa^2_{\rm s} R }{\sqrt{n(n+1)}}\psi_{3n}^m\boldsymbol V_n^m
+\frac{1}{R}\left(z_n(\kappa_{\rm p}R)\phi_n^m
+\sqrt{n(n+1)}\psi_{2n}^m\right) \boldsymbol W_n^m.
\end{align}

On the other hand, $\boldsymbol v$ has the Fourier expansion:
\begin{equation}\label{D-v2}
 \boldsymbol v(R, \theta, \varphi)=\sum_{n=0}^\infty\sum_{m=-n}^n v_{1n}^m
\boldsymbol T_n^m+v_{2n}^m\boldsymbol V_n^m+v_{3n}^m\boldsymbol W_n^m. 
\end{equation}
Comparing \eqref{D-v1} with \eqref{D-v2}, we obtain 
\begin{equation}\label{vtp}
\begin{cases}
 v_{1n}^m =\dfrac{\sqrt{n(n+1)}}{R}\phi_n^m
+\dfrac{(1+z_n(\kappa_{\rm s}R))}{R}\psi_{2n}^m,\\[5pt]
v_{2n}^m =\dfrac{ \kappa^2_{\rm
s}R}{\sqrt{n(n+1)}}\psi_{3n}^m,\\[5pt]
v_{3n}^m =\dfrac{z_n(\kappa_{\rm p}R)}{R}\phi_n^m
+\dfrac{\sqrt{n(n+1)}}{R}\psi_{2n}^m,
\end{cases}
\end{equation}
and
\begin{equation}\label{ptv}
\begin{cases}
 \phi_n^m =\dfrac{R(1+z_n(\kappa_{\rm
s}R))}{\Lambda_n}v_{3n}^m-\dfrac{R\sqrt{n(n+1)}}{\Lambda_n}v_{1n}^m,\\[5pt]
 \psi_{2n}^m =\dfrac{R z_n(\kappa_{\rm
p}R)}{\Lambda_n}v_{1n}^m-\dfrac{R\sqrt{n(n+1)}}{\Lambda_n}v_{3n}^m,\\[5pt]
 \psi_{3n}^m =\dfrac{\sqrt{n(n+1)}}{\kappa_{\rm s}^2 R}v_{2n}^m,
 \end{cases}
\end{equation}
where 
\[
 \Lambda_n=z_n(\kappa_{\rm p}R)(1+z_n(\kappa_{\rm s}R))-n(n+1).
\]
Noting \eqref{zn}, we have from a simple calculation that 
\[
 {\rm Im}\Lambda_n={\rm Re}z_n(\kappa_{\rm p}R){\rm Im}z_n(\kappa_{\rm
s}R)+(1+{\rm Re}z_n(\kappa_{\rm s}R)){\rm Im}z_n(\kappa_{\rm p}R)<0,
\]
which implies that $\Lambda_n\neq 0$ for $n=0, 1, \dots.$

\end{document}